\pdfoutput=1
\documentclass[11pt,oneside]{amsart}
\usepackage{decoupling}
\usepackage[style=alphabetic,maxalphanames=5]{biblatex}
\addbibresource{decoupling.bib}
\hypersetup{
pdftitle={Decoupling inequalities for quadratic forms},
pdfauthor={Guo, Oh, Zhang, Zorin-Kranich},
}

\title{Decoupling inequalities for quadratic forms}
\author{Shaoming Guo}
\address[SG]{Department of Mathematics\\ University of Wisconsin Madison\\ USA}
\email{shaomingguo@math.wisc.edu}
\author{Changkeun Oh}
\address[CO]{Department of Mathematics\\ University of Wisconsin Madison\\ USA}
\email{coh28@wisc.edu}
\author{Ruixiang Zhang}
\address[RZ]{Department of Mathematics\\ University of California, Berkeley\\ USA}
\email{ruixiang@berkeley.edu}
\author{Pavel Zorin-Kranich}
\address[PZ]{Mathematical Institute\\ University of Bonn\\ Germany}
\email{pzorin@math.uni-bonn.de}
\subjclass[2020]{42B25 (Primary) 11L15, 26D05 (Secondary)}

\begin{document}

\begin{abstract}
We prove sharp $\ell^q L^p$ decoupling inequalities for $p,q \in [2,\infty)$ and arbitrary tuples of quadratic forms.
Connections to prior results on decoupling inequalities for quadratic forms are also explained.
We also include some applications of our results to exponential sum estimates and to Fourier restriction estimates.
The proof of our main result is based on scale-dependent Brascamp--Lieb inequalities.
\end{abstract}
\maketitle

\section{Introduction}
\label{sec:introduction}

Let $n, d \geq 1$.
We denote by $\bq(\xi)=(Q_1(\xi),\ldots,Q_n(\xi))$ an $n$-tuple of real quadratic forms in $d$ variables.
The graph of such a tuple, $S_{\bq}=\Set{(\xi,\bq(\xi)) \in [0, 1]^d \times \R^{n} }$, is a $d$-dimensional submanifold of $\R^{d+n}$.
We often write a spatial vector in $\R^{d+n}$ as $(x, y)$ with $x=(x_1, \dots, x_d)\in \R^d$ and $y=(y_1, \dots, y_n)\in \R^n$.
Similarly we often write a frequency vector in $ \R^{d+n}$ as $(\xi, \eta)$ with $\xi=(\xi_1, \dots, \xi_d)\in \R^d$ and $\eta=(\eta_{1}, \dots, \eta_{n})\in \R^n$.

Let $\Box\subset [0, 1]^d$.
Define the Fourier extension operator
\begin{equation}
E^{\bq}_{\Box} g(x, y):=\int_{\Box} g(\xi) e^{i(x\cdot \xi+y\cdot \bq(\xi))}d\xi,
\end{equation}
with $x\in \R^d, y\in \R^n$.
For $q, p\ge 2$ and dyadic $\delta\in (0, 1)$, let $D_{q, p}(\bq, \delta)$ be the smallest constant $D$ such that
\begin{equation}\label{201028e1.2}
\norm[\big]{E^{\bq}_{[0, 1]^d} g}_{L^p(w_{B})}\le D\Big( \sum_{\substack{\Box\subset [0, 1]^d\\ l(\Box)=\delta}} \norm[\big]{ E^{\bq}_{\Box} g}_{L^p(w_B)}^q\Big)^{1/q}
\end{equation}
holds for every measurable function $g$ and every ball $B\subset \R^{d+n}$ of radius $\delta^{-2}$, where $w_B$ is a smooth version of the indicator function of $B$ (see \eqref{201105e1_27} in the subsection of notation) and the sum on the right hand side runs through all dyadic cubes of side length $\delta$.
In the current paper, we determine an optimal asymptotic behavior of $D_{q, p}(\bq, \delta)$ as $\delta$ tends to zero, for every choice of $q, p\ge 2$ and $\bq$.

Before stating our main theorem, let us first review related results in the literature.
Decoupling theory originated from the work of Wolff \cite{MR1800068} and was further developed by \L aba and Wolff \cite{MR1956533}, \L aba and Pramanik \cite{MR2264215}, Garrig\'os and Seeger \cite{MR2546636} \cite{MR2664568} and Bourgain \cite{MR3038558}.
A breakthrough came with the resolution of the $\ell^2$-decoupling conjecture for paraboloids by Bourgain and Demeter \cite{MR3374964}.
Subsequently, Bourgain, Demeter and Guth \cite{MR3548534} resolved the Main Conjecture in Vinogradov's mean value theorem using decoupling theory.
We also refer to Bourgain and Demeter \cite{MR3614930}, Bourgain, Demeter and Guo \cite{MR3709122}, Guo and Zhang \cite{MR3994585} and Guo and Zorin-Kranich \cite{MR4031117} for extensions of \cite{MR3548534} to higher dimensions.

In the current paper, we study sharp decoupling inequalities for quadratic $d$-surfaces in $\R^{d+n}$ with $d, n\ge 1$.
The cases $n=1, d\ge 1$, that is, quadratic hypersurfaces, were the objects studied in Bourgain and Demeter \cite{MR3374964}, \cite{MR3736493}.
Since these works, there have been a number of other works studying sharp decoupling inequalities for quadratic $d$-surfaces in $\R^{d+n}$ with $n\ge 2$, that is, manifolds of co-dimension greater than one.
Bourgain's improvement on the Lindel\"of hypothesis \cite{MR3556291} relies on a decoupling inequality in the case $d=n=2$, which was later generalized and extended to a more general family of manifolds with dimension and codimension $2$ in \cite{MR3447712}.
Further sharp decoupling inequalities for (non-degenerate) quadratic $d$-surfaces of co-dimension $2$ were proven, for $2\le d\le 4$, in \cite{MR3945730} and \cite{MR4143735}.
More recently, in \cite{arxiv:1912.03995}, the classification of sharp decoupling inequalities for quadratic $3$-surfaces in $\R^5$ was completed, and sharp decoupling inequalities were proved in the ``degenerate'' cases, which were not covered by previously mentioned works.
The approach to the ``degenerate'' cases in \cite{arxiv:1912.03995} stands out from the previously mentioned works, in that it relies on small cap decouplings for the parabola and the $2$-surface $(\xi_1, \xi_2, \xi_1^2, \xi_1\xi_2)$ (we refer also to \cite{MR4153908} for further discussion of small cap decouplings).
For manifolds of co-dimension $n>2$, the only result for quadratic forms that are not monomials prior to the current article was due to Oh \cite{MR3848437}, who proved sharp decoupling inequalities for non-degenerate $3$-surfaces in $\R^6$.
In the current paper, we provide a unified approach that takes care of all the above examples, and indeed all quadratic $d$-surfaces in $\R^{d+n}$ for arbitrary combinations of $d$ and $n$.
In Section~\ref{sec:examples} we will explain in more detail how the above mentioned results fit into our main theorem.

Before we finish reviewing the related decoupling literature, we would also like to mention that Li \cite{MR4134240,MR4276295}, building partially on \cite{MR3939285,2020arXiv200907953G}, quantified the $\epsilon$ loss implicit in \eqref{bestconstant} in the case of the parabola.

Beyond decoupling theory, problems associated with quadratic $d$-surfaces ($d\ge 2$) of co-dimension bigger than one have also attracted much attention, in particular in Fourier restriction theory and related areas.
We refer to Christ \cite{MR766216}, \cite{christthesis}, Mockenhaupt \cite{Mockenhaupt}, Bak and Lee \cite{MR2064058}, Bak, Lee and Lee \cite{MR3694011}, Lee and Lee \cite{MR4319402}, Guo and Oh \cite{2020arXiv200907244G} for the restriction problems associated with manifolds of co-dimension two and higher, Bourgain \cite{MR1097257}, Rogers \cite{MR2231112} and Oberlin \cite{MR2289622} for the planar variant of the Kakeya problem, and Oberlin \cite{MR2078263} for sharp $L^p\to L^q$ improving estimates for a quadratic $3$-surface in $\R^5$.
Recently, Gressman \cite{MR3922044}, \cite{MR3992033} and \cite{MR4271482} has made significant progress in proving sharp $L^p$-improving estimates for Radon transforms of intermediate dimensions.
Perhaps more interestingly, he connected this problem with Brascamp-Lieb inequalities and geometric invariant theory.

One major difficulty in the development of the above mentioned problems in the setting of co-dimension bigger than one is the lack of a good notion of ``curvature''.
This is in strong contrast with the case of co-dimension one, where we have Gaussian curvatures and the notion of rotational curvatures, introduced by Phong and Stein in \cite{MR857680} and \cite{MR853446}.

Next let us turn to the main theorem of the current paper.
We will formulate a slightly more general (and essentially equivalent) version of \eqref{201028e1.2}.
This version uses functions with Fourier supports in small neighborhoods of $S_{\bq}$, instead of Fourier extension operators, and lends itself more readily to induction on dimension $d$.

It is convenient to define the Fourier supports in terms of symmetries of $S_{\bq}$.
The group $\bfA$ generated by translations and scalings of $\R^{d}$ consists of affine maps of the form $A(\xi) = \delta\xi + a$ with $a\in \R^{d}$ and $\delta\in (0,\infty)$ (in particular, $\bfA \cong \R^{d} \rtimes (0,\infty)$).
This group acts on $\R^{d+n}$ by affine transformations
\begin{equation}
\label{eq:affine-action-Rd+n}
\calA(A)(\xi,\eta)
= (\delta\xi + a, \delta^{2}\eta + \delta \nabla\bq(a) \cdot \xi + \bq(a)).
\end{equation}
This $\bfA$-action leaves $S_{\bq}$ invariant.
For a cube $\Box\subset\R^{d}$, let $A_{\Box}\in\bfA$ be the map such that $A_{\Box}([0,1]^{d})=\Box$, and denote the corresponding affine transformation on $\R^{d+n}$ by $\calA_{\Box}:=\calA(A_{\Box})$.
We define the associated uncertainty region by
\begin{equation}
\label{eq:Uncertainty-box}
\calU_{\Box} = \calU_{\Box}(\bq) :=
\calA_{\Box}\Bigl( [-2,2]^{d} \times \prod_{j=1}^{n} 4d (\norm{\Hess Q_{j}}+1) [-1,1] \Bigr).
\end{equation}
The main feature of the definition \eqref{eq:Uncertainty-box} is that the uncertainty region $\calU_{\Box}$ contains the convex hull of the graph of $\bq$ on $\Box$ and is not much larger than this convex hull.
Another convenient property is that
\begin{equation}
\label{eq:Uncertainty-box-nested}
2\Box \subseteq 2\Box'
\implies
\calU_{\Box} \subseteq \calU_{\Box'}.
\end{equation}
We will denote by $f_{\Box}$ an arbitrary function with $\supp \widehat{f_{\Box}} \subseteq \calU_{\Box}$.\\

Let $q,p \ge 2$.
Let $\delta<1$ be a dyadic number.
Let $\Part{\delta}$ be the partition of $[0, 1]^d$ into dyadic cubes of side length $\delta$.
Let $\Dec_{q, p}(\bq, \delta)$ be the smallest constant $D$ such that
\begin{equation}\label{equ:decoupling_definition}
\norm{\sum_{\Box\in \Part{\delta}} f_{\Box}}_{L^{p}(\R^{d+n})}
\le
D \Big(\sum_{\Box\in \Part{\delta}} \norm{f_{\Box}}_{L^{p}(\R^{d+n})}^q\Big)^{1/q}
\end{equation}
holds for every $f_{\Box}$ with $\supp \widehat{f}_{\Box} \subseteq \calU_{\Box}$.
If $p=q$, we often write
\begin{equation}
\Dec_p(\bq, \delta):=\Dec_{q, p}(\bq, \delta).
\end{equation}
Let $\Gamma_{q, p}(\bq)$ be the smallest constant $\Gamma$ such that, for every $\epsilon>0$, we have
\begin{equation}\label{bestconstant}
\Dec_{q, p}(\bq, \delta)\le C_{p, q, \bq, \epsilon} \delta^{-\Gamma-\epsilon}, \text{ for every dyadic } \delta<1,
\end{equation}
where $C_{p, q, \bq, \epsilon}$ is a constant that is allowed to depend on $p, q, \bq$ and $\epsilon$.
If $p=q$, we often write
\begin{equation}
\Gamma_p(\bq):=\Gamma_{p, p}(\bq).
\end{equation}
For a tuple $\widetilde{\bq}=(\widetilde{Q}_1(\xi), \dots, \widetilde{Q}_{\tilde{n}}(\xi))$ of quadratic forms with $\xi\in \R^d$, denote
\begin{equation}
\NV(\widetilde{\bq}):=|\Set{1\le d'\le d: \partial_{\xi_{d'}} \widetilde{Q}_{\tilde{n}'}\not\equiv 0 \text{ for some } 1\le \tilde{n}'\le \tilde{n}}|.
\end{equation}
Here for a function $F$, we use $F\not\equiv 0$ to mean that it does not vanish constantly, and $\NV(\widetilde{\bq})$ refers to ``the number of variables that $\widetilde{\bq}$ depends on''.
For instance, for $\widetilde{\bq}=((\xi_1+\xi_3)^2, (\xi_1+\xi_3+\xi_4)^2)$, we have that $\NV(\widetilde{\bq})=3$.

For $0\le n'\le n$ and $0 \leq d' \leq d$, define
\begin{equation}
\label{eq:numvar}
\numvar_{d',n'}(\bq):=
\inf_{\substack{M\in \R^{d\times d}\\ \rank(M)=d'}}
\inf_{\substack{M'\in \R^{n\times n}\\ \rank(M')=n'}}
\NV(M' \cdot (\bq \circ M)),
\end{equation}
where $\bq\circ M$ is the composition of $\bq$ with $M$.
We abbreviate $\numvar_{n'}(\bq) := \numvar_{d,n'}(\bq)$.

As these quantities will be crucial throughout the entire paper, it may make sense to look at them from a few angles.
Let $H\subset \R^d$ be a linear subspace of codimension $m$.
Let $\rot_H$ be a rotation\footnote{There are infinitely many such rotations: We pick an arbitrary one.} on $\R^d$ that maps $\Set{\xi\in \R^d: \xi_{d-m+1}=\dots=\xi_d=0}$ to $H$.
We define
\begin{equation}
\label{eq:Q-on-subspace}
Q|_H(\xi'):=Q((\xi', {\bf 0})\cdot (\rot_H)^T) , \text{ with } \xi'\in \R^{d-m} \text{ and } {\bf 0}=(0, \dots, 0)\in \R^m.
\end{equation}
Here $(\rot_H)^T$ refers to the transpose of $\rot_H$.
Similarly, for $\bq=(Q_1, \dots, Q_n)$, we denote
\begin{equation}
\bq|_H:=(Q_1|_H, \dots, Q_n|_H).
\end{equation}
From the Bourgain-Guth argument used in Proposition~\ref{prop:Bourgain-Guth} below and from Lemma~\ref{lem:subspace-lower-bd} below one can see clearly why restricting to subspaces is natural.
With the above notation, we can also write
\begin{equation}\label{201105e1_15}
\numvar_{d', n'}(\bq)=\inf_{H}\inf_{M\in \text{GL}_{d'}} \inf_{\substack{M'\in \R^{n'\times n}\\ \rank(M')=n'}} \NV(M'\cdot (\bq|_H\circ M)),
\end{equation}
where $H$ runs through all linear sub-spaces in $\R^d$ of dimension $d'$.
In other words, this is the minimal number of variables that $n'$ many of the forms in $\bq$ depend on, after restricting them to sub-spaces of dimension $d'$, up to linear changes of variables in their definition domain $\R^{d'}$ and their value domain $\R^{n}$. From \eqref{201105e1_15}, one can also see that
\begin{equation}\label{201107e1_16}
\numvar_{d', n'}(\bq)=\inf_{H \text{ of dim } d'} \numvar_{d', n'}(\bq|_H).
\end{equation}
For example, if we take $d=3, n=3$ and $\bq=((\xi_1+\xi_2+\xi_3)^2, (\xi_1+\xi_2)^2, (\xi_1+\xi_2)^2)$, then $\numvar_0(\bq)=\numvar_1(\bq)=0$, $\numvar_2(\bq)=1$ and $\numvar_3(\bq)=2$.
\begin{theorem}
\label{thm:main}
Let $d\ge 1$ and $n\ge 1$.
Let $\bq=(Q_1, \dotsc, Q_n)$ be a collection of quadratic forms in $d$ variables.
Let $2 \leq q \leq p < \infty$.
Then the $\ell^{q}L^{p}$ decoupling exponent for the $d$-surface $S_{\bq}=\Set{(\xi, \bq(\xi)): \xi\in [0, 1]^d}$ equals
\begin{equation}
\label{eq:dec-exponent-unrolled}
\Gamma_{q,p}(\bq) =
\max_{0\leq d'\leq d} \max_{0\leq n'\leq n} \Big( d' \bigl(1-\frac{1}{p}-\frac{1}{q} \bigr) - \numvar_{d',n'}(\bq)\bigl(\frac{1}{2}-\frac{1}{p}\bigr) - \frac{2(n-n')}{p}\Big).
\end{equation}
Moreover, for $2 \leq p < q \leq \infty$, we have
\begin{equation}
\label{eq:dec-exponent-q>p}
\Gamma_{q,p}(\bq) = \Gamma_{p,p}(\bq) + d(1/p-1/q).
\end{equation}
\end{theorem}

The expression \eqref{eq:dec-exponent-unrolled} simplifies in the case $q=p$ in the following way.

\begin{corollary}\label{1015.1.3}
In the situation of Theorem~\ref{thm:main}, we have
\begin{equation}
\label{eq:dec-exp-ell-p-L-p}
\Gamma_{p}(\bq) =
\max_{d/2 < d'\leq d} \max_{0\leq n'\leq n} \Big( (2d' - \numvar_{d',n'}(\bq)) \bigl(\frac{1}{2}-\frac{1}{p}\bigr) - \frac{2(n-n')}{p}\Big).
\end{equation}
for every $p\ge 2$.
\end{corollary}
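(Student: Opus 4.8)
The plan is to obtain the corollary as an immediate consequence of Theorem~\ref{thm:main} by setting $q=p$ and then showing that the values $d'<d/2$ never achieve the outer maximum. First I would specialize \eqref{eq:dec-exponent-unrolled} to $q=p$ and rewrite the $d'$-linear term using
\[
d'\bigl(1-\tfrac1p-\tfrac1p\bigr) = 2d'\bigl(\tfrac12-\tfrac1p\bigr),
\]
so that Theorem~\ref{thm:main} reads
\[
\Gamma_p(\bq) = \max_{0\le d'\le d}\ \max_{0\le n'\le n}\ \Big((2d'-\numvar_{d',n'}(\bq))\bigl(\tfrac12-\tfrac1p\bigr) - \tfrac{2(n-n')}{p}\Big).
\]
At this point the maximand already coincides with the one in \eqref{eq:dec-exp-ell-p-L-p}, so the only thing left is to show that the maximum over $0\le d'\le d$ equals the maximum over $d/2\le d'\le d$. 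The inequality $\ge$ is trivial, since the second maximum is over a sub-range of indices still containing $d'=d$; hence only $\le$ needs an argument.

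For this, I would fix $n'$ and any $d'$ with $0\le d'\le d/2$, and bound the corresponding term from above using the trivial inequalities $\numvar_{d',n'}(\bq)\ge 0$, $\tfrac12-\tfrac1p\ge 0$ (this is where $p\ge 2$ enters), and $2d'\le d$, obtaining
\[
(2d'-\numvar_{d',n'}(\bq))\bigl(\tfrac12-\tfrac1p\bigr) - \tfrac{2(n-n')}{p}\ \le\ d\bigl(\tfrac12-\tfrac1p\bigr)-\tfrac{2(n-n')}{p}.
\]
Then I would bound the term at $d'=d$ with the same $n'$ from below using $\numvar_{d,n'}(\bq)\le d$ — which follows from \eqref{eq:numvar} by taking $M=I_d$ and any full rank $M'\in\R^{n'\times n}$, since $\NV$ of a tuple of forms in $d$ variables is at most $d$ — giving
\[
(2d-\numvar_{d,n'}(\bq))\bigl(\tfrac12-\tfrac1p\bigr) - \tfrac{2(n-n')}{p}\ \ge\ d\bigl(\tfrac12-\tfrac1p\bigr)-\tfrac{2(n-n')}{p}.
\]
Comparing the two displays shows that every term with $d'\le d/2$ is dominated by the term with $d'=d$ and the same $n'$; consequently those indices may be dropped from (or, if $d$ is even, kept in) the outer maximum without changing its value, which yields \eqref{eq:dec-exp-ell-p-L-p}.

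I do not expect any genuine obstacle: the whole argument is a one-line algebraic identity together with the elementary domination estimate above, and it uses only the crude bounds $0\le\numvar_{d',n'}(\bq)$ and $\numvar_{d,n'}(\bq)\le d$ together with $p\ge 2$ — no finer monotonicity of $\numvar_{d',n'}$ in $d'$ is needed. The only mild point to be careful about is the non-integer threshold $d/2$: since the domination above holds for all integers $d'\le d/2$, the restricted range $\{d':d/2\le d'\le d\}$ (which always contains $d'=d$) captures the maximum regardless of the parity of $d$.
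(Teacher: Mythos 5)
Your argument is correct and is exactly the paper's (the paper dispatches the corollary with the one-line remark that $\numvar_{d,n'}(\bq)\leq d$, which is precisely your domination of every term with $2d'\leq d$ by the $d'=d$ term with the same $n'$, after rewriting $d'(1-\tfrac1p-\tfrac1p)=2d'(\tfrac12-\tfrac1p)$). Nothing further is needed.
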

\begin{proof}
For every $d' \leq d/2$ and $0 \leq n' \leq n$, we have $2d'-\numvar_{d',n'}(\bq)\leq d \leq 2d-\numvar_{d,n'}(\bq)$.
Hence, the $(d',n')$ term in \eqref{eq:dec-exponent-unrolled} is not larger than the $(d,n')$ term.
\end{proof}

Taking $d'=d$ and $n' \in \Set{0,n}$ in \eqref{eq:dec-exp-ell-p-L-p}, we see that, for every tuple $\bq=(Q_1, \dotsc, Q_n)$ of quadratic forms depending on $d$ variables, it always holds that
\begin{equation}
\label{eq:Gamma-p-p-universal-lower-bd}
\Gamma_{p}(\bq)\ge
\max{\Big(d(\frac12-\frac1p),2d(\frac12-\frac1p)-\frac{2n}{p} \Big)}
\text{ for every } p\ge 2.
\end{equation}
Similarly,
\begin{equation}
\label{eq:Gamma-2-p-universal-lower-bd}
\Gamma_{2,p}(\bq)\ge
\max{\Big(0,d(\frac12-\frac1p)-\frac{2n}{p} \Big)}
\text{ for every } p\ge 2.
\end{equation}
We say that $\bq=(Q_1,\ldots,Q_n)$ is \emph{strongly non-degenerate} if
\begin{equation}\label{equ:1strongly_non_dege}
\numvar_{d-m,n'}(\bq) \geq n' d/n-m,
\end{equation}
for every $n'$ and every $m$ with $0 \leq m \leq d$.

\begin{corollary}[Best possible $\ell^{2}L^{p}$ decoupling]\label{optimal-l2-decoupling}
We have
\begin{equation}\label{20201022.1.4}
\Gamma_{2,p}(\bq) =
\max{\Big(0,d(\frac12-\frac1p)-\frac{2n}{p} \Big)}
\text{ for every } 2 \leq p < \infty
\end{equation}
if and only if $\bq$ is strongly non-degenerate.
\end{corollary}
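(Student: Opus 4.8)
The plan is to read off the characterization directly from the exact formula for $\Gamma_{2,p}(\bq)$ supplied by Theorem~\ref{thm:main}. Setting $q=2$ in \eqref{eq:dec-exponent-unrolled}, one has
\beq
\Gamma_{2,p}(\bq) = \max_{0\le d'\le d}\ \max_{0\le n'\le n} F_{d',n'}(p),\qquad F_{d',n'}(p) := \bigl(d'-\numvar_{d',n'}(\bq)\bigr)\Bigl(\tfrac12-\tfrac1p\Bigr) - \frac{2(n-n')}{p}.
\eeq
Since $\numvar_{0,n'}(\bq)=0=\numvar_{d,0}(\bq)$, the terms indexed by $(d',n')=(0,n)$ and $(d',n')=(d,0)$ are respectively $0$ and $d(\tfrac12-\tfrac1p)-\tfrac{2n}{p}$, so by \eqref{eq:Gamma-2-p-universal-lower-bd} the right-hand side of \eqref{20201022.1.4} is always a lower bound for $\Gamma_{2,p}(\bq)$. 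Hence \eqref{20201022.1.4} holds for all $2\le p<\infty$ if and only if
\beq
F_{d',n'}(p)\ \le\ \max\Bigl(0,\ d\bigl(\tfrac12-\tfrac1p\bigr)-\tfrac{2n}{p}\Bigr)
\eeq
for every $2\le p<\infty$ and every admissible pair $(d',n')$; call this condition $(\star)$. It remains to show $(\star)$ is equivalent to the system \eqref{equ:1strongly_non_dege}.

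I would do this by elementary convexity in the variable $u:=1/p\in(0,1/2]$. Both $F_{d',n'}$ and $\ell(u):=\tfrac d2-(d+2n)u$ are affine in $u$, and because $n\ge 1$ the line $\ell$ has a unique zero at $u_0:=\tfrac{d}{2(d+2n)}\in(0,1/2)$, with $\ell>0$ on $[0,u_0)$ and $\ell<0$ on $(u_0,1/2]$. Thus the right-hand side of $(\star)$ equals the affine function $\ell$ on $[0,u_0]$ and the constant $0$ on $[u_0,1/2]$; being affine on each of these two subintervals, it dominates the affine function $F_{d',n'}$ throughout $(0,1/2]$ (equivalently, by continuity, throughout $[0,1/2]$) if and only if it does so at the three break points $u\in\{0,u_0,1/2\}$. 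A direct computation shows that the inequality at $u=0$ reads $d'-\numvar_{d',n'}(\bq)\le d$ and the one at $u=1/2$ reads $n'\le n$, both of which hold trivially (using $\numvar_{d',n'}(\bq)\le d'\le d$), while the inequality at $u=u_0$, after clearing the positive denominator $2(d+2n)$, becomes $n\bigl(d'-\numvar_{d',n'}(\bq)\bigr)\le(n-n')d$, i.e.
\beq
\numvar_{d',n'}(\bq)\ \ge\ \tfrac{n'd}{n}+d'-d,
\eeq
which upon writing $d'=d-m$ is exactly \eqref{equ:1strongly_non_dege}. This gives the ``if'' direction.

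For the ``only if'' direction I would avoid the interval argument entirely: assuming \eqref{20201022.1.4}, the bound $F_{d',n'}(p)\le\Gamma_{2,p}(\bq)$ forces $F_{d',n'}(p)\le\max(0,\ell(1/p))$ for all admissible $p$; evaluating at the single exponent $p_0:=2(d+2n)/d$ (which is $>2$ since $u_0<1/2$), where $1/p_0=u_0$ and hence the right-hand side vanishes, yields $F_{d',n'}(p_0)\le 0$, which unwinds to \eqref{equ:1strongly_non_dege} as before. There is really no analytic obstacle here: granting Theorem~\ref{thm:main}, the corollary is a purely arithmetic statement, and the only things to be careful about are bookkeeping — that the two ``universal'' exponents genuinely occur among the $F_{d',n'}$ (i.e. $\numvar_{0,n'}(\bq)=\numvar_{d,0}(\bq)=0$), that $\numvar_{d',n'}(\bq)\le d'$ (needed to make the $u=0$ and $u=1/2$ conditions automatic), and that $u_0$ lies strictly inside $(0,1/2)$ so that the test exponent $p_0$ is admissible — all of which are immediate from the definitions and from $n\ge 1$.
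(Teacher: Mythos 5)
Your argument is correct and follows essentially the same route as the paper's proof: reduce via Theorem~\ref{thm:main} with $q=2$ and the universal lower bound \eqref{eq:Gamma-2-p-universal-lower-bd} to a comparison of piecewise-linear functions of $1/p$, check it at finitely many exponents (the paper uses $p=p_0=2+4n/d$ and $p=\infty$; your three points $1/p\in\{0,u_0,1/2\}$ are the same check with the vacuous $p=2$ case made explicit), and unwind the condition at $p_0$ into \eqref{equ:1strongly_non_dege}. You merely spell out the ``direct calculation'' that the paper leaves implicit, so no further comment is needed.
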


We say that $\bq=(Q_1,\ldots,Q_n)$ is \emph{non-degenerate} if
\begin{equation}\label{eq:non_degenerate}
\numvar_{d-m,n'}(\bq) \geq n' d/n-2m,
\end{equation}
for every $n'$ and every $m$ with $0 \leq m < d/2$.

\begin{corollary}[Best possible $\ell^{p}L^{p}$ decoupling]
\label{201017coro1.3}
We have
\begin{equation}\label{20201021.1.4}
\Gamma_{p,p}(\bq)=\max{\Big(d(\frac12-\frac1p),2d(\frac12-\frac1p)-\frac{2n}{p} \Big)}
\text{ for every } 2 \leq p < \infty
\end{equation}
if and only if $\bq$ is non-degenerate.
\end{corollary}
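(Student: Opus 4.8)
The plan is to derive Corollary~\ref{201017coro1.3} from Corollary~\ref{1015.1.3} by an elementary convexity argument in the single variable $t:=\tfrac12-\tfrac1p\in[0,\tfrac12)$. Writing $v_{d',n'}:=\numvar_{d',n'}(\bq)$ and substituting $1/p=\tfrac12-t$, each term in \eqref{eq:dec-exp-ell-p-L-p} becomes the affine function $L_{d',n'}(t):=\bigl(2d'-v_{d',n'}+2(n-n')\bigr)t-(n-n')$, while the right-hand side of \eqref{20201021.1.4} becomes $U(t):=\max\bigl(dt,\,(2d+2n)t-n\bigr)$, a convex piecewise linear function whose corner sits at $t_0=\tfrac{n}{d+2n}<\tfrac12$ (here we use $d\ge1$). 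Since $L_{d,0}(t)=(2d+2n)t-n$ and $L_{d,n}(t)=(2d-v_{d,n})t\ge dt$ (because $v_{d,n}\le d$), we recover $\Gamma_{p,p}(\bq)\ge U(t)$, which is \eqref{eq:Gamma-p-p-universal-lower-bd}. Hence, by Corollary~\ref{1015.1.3}, the identity \eqref{20201021.1.4} holds for all $p\in[2,\infty)$ if and only if $L_{d',n'}(t)\le U(t)$ for every $t\in[0,\tfrac12)$ and every admissible pair $(d',n')$ with $\tfrac d2\le d'\le d$ and $0\le n'\le n$.

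The key step is a pointwise comparison lemma: for fixed $(d',n')$, writing $m:=d-d'\in[0,d/2]$, one has $L_{d',n'}(t)\le U(t)$ for all $t\in[0,\tfrac12)$ if and only if $v_{d',n'}\ge n'd/n-2m$. Indeed, $L_{d',n'}(0)=-(n-n')\le 0=U(0)$ and $L_{d',n'}(\tfrac12)=d'-v_{d',n'}/2\le d=U(\tfrac12)$ (using $d'\le d$ and $v_{d',n'}\ge 0$), so any violation must occur in the open interval $(0,\tfrac12)$, where $L_{d',n'}$ would have to exceed \emph{both} lines $t\mapsto dt$ and $t\mapsto(2d+2n)t-n$ at once. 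Computing the two points at which $L_{d',n'}$ meets these lines, one finds that $\{t:L_{d',n'}(t)>U(t)\}$ is the (possibly empty) open interval with endpoints $\tfrac{n-n'}{2d'-d-v_{d',n'}+2(n-n')}$ and $\tfrac{n'}{2m+v_{d',n'}+2n'}$, and that the former is $\ge$ the latter exactly when $n(2m+v_{d',n'})\ge n'd$, i.e.\ when $v_{d',n'}\ge n'd/n-2m$. It then remains to treat the degenerate configurations in which one of the two lines is never binding on $[0,\tfrac12)$ — namely $2d'-d-v_{d',n'}+2(n-n')\le 0$, or $2m+v_{d',n'}+2n'=0$, or the left endpoint above is $\ge\tfrac12$; in each of these $L_{d',n'}\le U$ holds automatically, and one checks, using only $n'\le n$, that $v_{d',n'}\ge n'd/n-2m$ is then already forced, so the equivalence persists.

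Granting the lemma, Corollary~\ref{201017coro1.3} is immediate: by definition \eqref{equ:strongly_non_dege}, $\bq$ is non-degenerate precisely when $v_{d-m,n'}\ge n'd/n-2m$ holds for every $0\le m\le d/2$ and every $0\le n'\le n$, and the lemma identifies this full list of inequalities with the assertion that $L_{d',n'}\le U$ on $[0,\tfrac12)$ for all admissible $(d',n')$, hence — by the reduction in the first paragraph — with $\Gamma_{p,p}(\bq)=\max\bigl(d(\tfrac12-\tfrac1p),\,2d(\tfrac12-\tfrac1p)-\tfrac{2n}{p}\bigr)$ for every $p\in[2,\infty)$. I do not expect a serious obstacle here: the substance of the statement is already contained in Theorem~\ref{thm:main} (via Corollary~\ref{1015.1.3}), and what is left is elementary one-variable calculus. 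The one point demanding genuine care is turning the comparison lemma into an honest ``if and only if'': for the contrapositive direction one must, given any pair $(m,n')$ with $v_{d-m,n'}<n'd/n-2m$, produce an explicit $t^{\ast}\in(0,\tfrac12)$ lying in the interval identified above at which $L_{d-m,n'}(t^{\ast})>U(t^{\ast})$, and thereby a concrete exponent $p^{\ast}$ for which \eqref{20201021.1.4} fails.
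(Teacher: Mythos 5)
Your proposal is correct and follows essentially the same route as the paper: both reduce via Corollary~\ref{1015.1.3} and the universal lower bound \eqref{eq:Gamma-p-p-universal-lower-bd} to verifying that each affine-in-$1/p$ term $(2d'-\numvar_{d',n'}(\bq))\bigl(\frac12-\frac1p\bigr)-\frac{2(n-n')}{p}$ stays below the two-line envelope, and both identify the binding constraint for each pair $(d',n')$ with exactly the non-degeneracy inequality \eqref{equ:strongly_non_dege}. The only difference is cosmetic: since the comparisons at $p=2$ and $p=\infty$ are automatic, the paper tests only at the single crossover point $p_0=2+4n/d$, whereas you compute the full violation interval in $t=\frac12-\frac1p$; both computations produce the same condition.
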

In view of \eqref{eq:Gamma-2-p-universal-lower-bd} and \eqref{eq:Gamma-p-p-universal-lower-bd}, Corollary~\ref{optimal-l2-decoupling} and Corollary~\ref{201017coro1.3} characterize tuples of quadratic forms that possess ``best possible'' $\ell^2 L^p$ decoupling constants and $\ell^p L^p$ decoupling constants, respectively.

We say that $\bq=(Q_1,\ldots,Q_n)$ is \emph{weakly non-degenerate} if
\begin{equation}\label{equ:weakly_non_dege}
\numvar_{d-m,n}(\bq) \geq d-2m,
\end{equation}
for every $0 \leq m < d/2$.

\begin{corollary}\label{20201021.1.5.}
A tuple $\bq=(Q_1,\ldots,Q_n)$ of quadratic forms is weakly non-degenerate if and only if there exists some $p_c>2$ such that
\begin{equation}\label{20201021.1.21.}
\Gamma_p(\bq)=d(\frac12-\frac1p), \;\;\;\; 2 \leq p \leq p_c.
\end{equation}
If $\bq$ is weakly non-degenerate, then the largest possible $p_c$ is given by
\begin{equation}\label{giantnumber}
2+\min
\biggl( \frac{4(n-n')}{d-\bigl(\numvar_{d-m,n'}(\bq)+2m\bigr) }\biggr),
\end{equation}
where the minimum on the right hand side is taken over all $n'$ and $m$ satisfying $n'\le n-1, m< d/2$ and $d>\numvar_{d-m,n'}(\bq)+2m$.
\end{corollary}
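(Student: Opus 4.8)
The plan is to deduce the statement from the explicit value of $\Gamma_p(\bq)=\Gamma_{p,p}(\bq)$ provided by Corollary~\ref{1015.1.3}, via an elementary analysis of a finite family of affine functions. I substitute $t:=\tfrac12-\tfrac1p$, so that $t$ runs over $[0,\tfrac12)$ as $p$ runs over $[2,\infty)$ and $\tfrac1p=\tfrac12-t$; then \eqref{eq:dec-exp-ell-p-L-p} turns each summand into an affine function of $t$,
\[
\Gamma_p(\bq)=\max_{d/2\le d'\le d}\ \max_{0\le n'\le n}F_{d',n'}(t),\qquad
F_{d',n'}(t):=\bigl(2d'-\numvar_{d',n'}(\bq)+2(n-n')\bigr)t-(n-n').
\]
Thus $t\mapsto\Gamma_p(\bq)$ is convex and piecewise linear, with value $0$ at $p=2$. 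Because $\numvar_{d,n}(\bq)\le d$, the single term $F_{d,n}(t)=\bigl(2d-\numvar_{d,n}(\bq)\bigr)t$ satisfies $F_{d,n}(t)\ge dt$, so $\Gamma_p(\bq)\ge d(\tfrac12-\tfrac1p)$ for every $p$ — this is the first half of \eqref{eq:Gamma-p-p-universal-lower-bd}. Consequently the equality in \eqref{20201021.1.21.} is exactly the assertion that every line $F_{d',n'}$ lies on or below the line $t\mapsto dt$ throughout $\bigl[0,\tfrac12-\tfrac1{p_c}\bigr]$.

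For the equivalence I would separate the terms with $n'=n$ from the rest. A term with $n'<n$ has $F_{d',n'}(0)=-(n-n')<0$, hence lies strictly below $t\mapsto dt$ for all sufficiently small $t>0$ and is immaterial near $p=2$. A term with $n'=n$ passes through the origin with slope $2d'-\numvar_{d',n}(\bq)$, and lies on or below $t\mapsto dt$ for all $t\ge 0$ if and only if $\numvar_{d',n}(\bq)\ge 2d'-d$; writing $d'=d-m$ and letting $d'$ range over $d/2\le d'\le d$, i.e.\ $0\le m\le d/2$, this reads $\numvar_{d-m,n}(\bq)\ge d-2m$, which is exactly weak non-degeneracy \eqref{equ:weakly_non_dege}. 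Hence: if $\bq$ is weakly non-degenerate, then all $F_{d',n'}$ lie on or below $t\mapsto dt$ on a neighbourhood of $t=0$, and \eqref{20201021.1.21.} holds on $[2,p_c]$ for some $p_c>2$ (the endpoint $p=2$ being covered by $\Gamma_2(\bq)=0$); and if $\bq$ is not weakly non-degenerate, then some $F_{d-m,n}$ with $0\le m\le d/2$ has slope $>d$, so $\Gamma_p(\bq)\ge F_{d-m,n}(t)>dt$ for every $p>2$ and no such $p_c$ exists.

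For the value of the largest $p_c$, assume $\bq$ weakly non-degenerate. Then every $n'=n$ line lies on or below $t\mapsto dt$, with $F_{d,n}(t)=dt$ identically (weak non-degeneracy at $m=0$ forces $\numvar_{d,n}(\bq)=d$), so $\Gamma_p(\bq)=dt$ holds exactly while $F_{d-m,n'}(t)\le dt$ for all $0\le m\le d/2$ and $0\le n'\le n-1$. The slope of $F_{d-m,n'}$ exceeds $d$ by $d-2m-\numvar_{d-m,n'}(\bq)+2(n-n')$; so this inequality is automatic when that quantity is $\le 0$, and is otherwise equivalent to $t\le\frac{n-n'}{d-2m-\numvar_{d-m,n'}(\bq)+2(n-n')}$, an upper bound which is genuinely $<\tfrac12$ exactly when $d-2m-\numvar_{d-m,n'}(\bq)>0$. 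Therefore $\Gamma_p(\bq)=dt$ precisely while $t\le t_c$, where
\[
t_c:=\min_{\substack{0\le m\le d/2,\ 0\le n'\le n-1\\ d-2m-\numvar_{d-m,n'}(\bq)>0}}\frac{n-n'}{d-2m-\numvar_{d-m,n'}(\bq)+2(n-n')}
\]
(and $t_c=\tfrac12$, i.e.\ the identity holds for all $p<\infty$, if no such pair exists). Inverting $t=\tfrac12-\tfrac1p$, i.e.\ $p_c=1/(\tfrac12-t_c)$, and simplifying, gives
\[
p_c=2+\min_{0\le m\le d/2}\ \min_{0\le n'\le n-1}\frac{4(n-n')}{d-\bigl(\numvar_{d-m,n'}(\bq)+2m\bigr)},
\]
which is \eqref{giantnumber}, with the convention that a pair with $d-\bigl(\numvar_{d-m,n'}(\bq)+2m\bigr)\le 0$ imposes no constraint and thus contributes $+\infty$ to the minimum.

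The whole argument is elementary once Corollary~\ref{1015.1.3} is granted, so I do not expect any real obstacle; the only point that calls for care is the bookkeeping in the last step — keeping track of exactly which pairs $(m,n')$ impose a nonvacuous constraint on $t\in[0,\tfrac12)$ (precisely those with $d-2m-\numvar_{d-m,n'}(\bq)>0$), inverting $t_c$ back into $p_c$ correctly, and matching the resulting expression with the precise shape of \eqref{giantnumber} under the stated $+\infty$ convention. Everything else is routine manipulation of the one-parameter family of affine functions $F_{d',n'}$ and the change of variables between $p$ and $t=\tfrac12-\tfrac1p$.
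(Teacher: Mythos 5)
Your proposal is correct and follows essentially the same route as the paper: both directions of the equivalence and the value of the largest $p_c$ are read off from Corollary~\ref{1015.1.3} by elementary analysis of the finitely many affine functions in $\tfrac12-\tfrac1p$ (the paper computes the crossing points $p(d',n')=2+\tfrac{4(n-n')}{2d'-d-\numvar_{d',n'}(\bq)}$ directly, which is exactly your $t_c$ computation after inverting the change of variables). Your explicit convention that pairs with $d-2m-\numvar_{d-m,n'}(\bq)\le 0$ impose no constraint and contribute $+\infty$ to the minimum is a point the paper leaves implicit, but it is the intended reading, so there is no substantive difference.
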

One reason that we are interested in the exponent $p_c$ in Corollary~\ref{20201021.1.5.} is that, when applying our main results to exponential sum estimates (see Corollary~\ref{211127coro2.1.} below), the exponent $p_c$ is the largest for which we can still expect square root cancellation; see right below Corollary~\ref{211127coro2.1.} for what we mean by square root cancellation.

The connections of these three notions of non-degeneracies will be discussed in forthcoming examples, see for instance Corollary~\ref{201024coro3.5}.
We leave the proof of Corollary~\ref{optimal-l2-decoupling}--\ref{20201021.1.5.} to Section~\ref{201031section7}.
\medskip

At the end of the introduction, we would like to make a few remarks on Theorem~\ref{thm:main} and how the double $\max$ on the right hand side of \eqref{eq:dec-exponent-unrolled} is connected to different regimes in scale-dependent Brascamp--Lieb inequalities (see Theorem~\ref{thm:BL} below).
To enable comparison with previous approaches, we specialize to $p=q$, so that \eqref{eq:dec-exponent-unrolled} simplifies to \eqref{eq:dec-exp-ell-p-L-p}, and consider the tuples of quadratic forms
\begin{equation}\label{201112e1_28}
\bq_1(\xi)=(\xi_1\xi_2, \xi_1\xi_3, \xi_2\xi_3, \xi_3^2),
\quad
\bq_2(\xi)=(\xi_1^2, \xi_2^2+\xi_1\xi_3),
\end{equation}
for which sharp $\ell^{p}L^{p}$ decoupling inequalities were previously proved in \cite{MR4031117} and \cite{arxiv:1912.03995}, respectively.
The reasons of picking these two examples will soon become clear.
The numbers of variables \eqref{eq:numvar} that appear in \eqref{eq:dec-exp-ell-p-L-p} in these cases are summarized in Table~\ref{tab:numvar}.
\begin{table}
\begin{tabular}{l@{\hspace{1cm}}m{0.6cm}m{0.6cm}m{0.6cm}m{0.6cm}m{0.6cm}@{\hspace{1cm}}m{0.6cm}m{0.6cm}m{0.6cm}}
\toprule
& \multicolumn{5}{c@{\hspace{1cm}}}{$\numvar_{d',n'}(\bq_{1})$} & \multicolumn{3}{c@{\hspace{1cm}}}{$\numvar_{d',n'}(\bq_{2})$} \\
$n'=$ & 0 & 1 & 2 & 3 & 4 & 0 & 1 & 2\\ \midrule
$d'=3$ & 0 & 1 & 2 & 3 & 3 & 0 & 1 & 3\\
$d'=2$ & 0 & 0 & 0 & 0 & 2 & 0 & 0 & 1\\ \bottomrule
\end{tabular}
\vspace{1ex}
\caption{\label{tab:numvar}Minimal number of variables on which lower dimensional restrictions of $\bq_{1}$ and $\bq_{2}$ depend.}
\end{table}

To illustrate the kind of arguments used to obtain the entries in Table~\ref{tab:numvar}, let us consider $\numvar_{3,3}(\bq_{1})$.
If $\numvar_{3,3}(\bq_{1}) \leq 2$, then there is a $3$-dimensional subspace $\calQ$ of the linear space of quadratic forms spanned by $\bq_{1}$, such that all forms in the subspace $\calQ$ depend on at most 2 variables.
The space of quadratic forms depending on any 2 variables has dimension 3, so $\calQ$ consists of all quadratic forms depending on these 2 variables.
In particular, $\calQ$ contains 2 linearly independent quadratic forms that are complete squares.
Therefore, $\lin \bq_{1}$ constants a quadratic form of the form $(a\xi_{1}+b\xi_{2}+c\xi_{3})^{2}$ with $(a,b)\neq (0,0)$.
But such a quadratic form includes one of the monomials $\xi_{1}^{2},\xi_{2}^{2}$ with a non-zero coefficient, contradicting the fact that it lies in $\lin \bq_{1}$.
Hence, $\numvar_{3,3}(\bq_{1}) > 2$, and since $\numvar_{3,3}(\bq_{1}) \leq 3$, we obtain $\numvar_{3,3}(\bq_{1}) = 3$.
Upper bounds for $\numvar_{d',n'}$ are usually easier to obtain.
For example, with the notation from \eqref{201105e1_15}, the upper bounds for $\numvar_{2,n'}(\bq_{1})$ are obtained with $H=\Set{\xi_{3}=0}$ and the upper bounds for $\numvar_{2,n'}(\bq_{2})$ with $H=\Set{\xi_{1}=0}$.

Substituting the numbers in Table~\ref{tab:numvar} into \eqref{eq:dec-exp-ell-p-L-p}, we see that the decoupling exponents are given by
\begin{align}
\label{eq:Gamma-example-1}
\Gamma_p(\bq_1) &= \max\Bigl(\frac 3 2-\frac 3 p, 2-\frac 6 p, 3-\frac{14}{p} \Bigr) \quad\text{and}\\
\label{eq:Gamma-example-2}
\Gamma_p(\bq_2) &= \max\Bigl(\frac 3 2-\frac 3 p, \frac 5 2-\frac 7 p, 3-\frac{10}{p} \Bigr)
\end{align}
for every $p\geq 2$.
These decoupling exponents are sketched as functions of $1/p$ in Figure~\ref{fig:q1+q2} (not to scale).
\begin{figure*}[t!]
\captionsetup[subfigure]{justification=centering}
\centering
\begin{subfigure}[t]{0.5\textwidth}
\centering
\includegraphics[width=\textwidth]{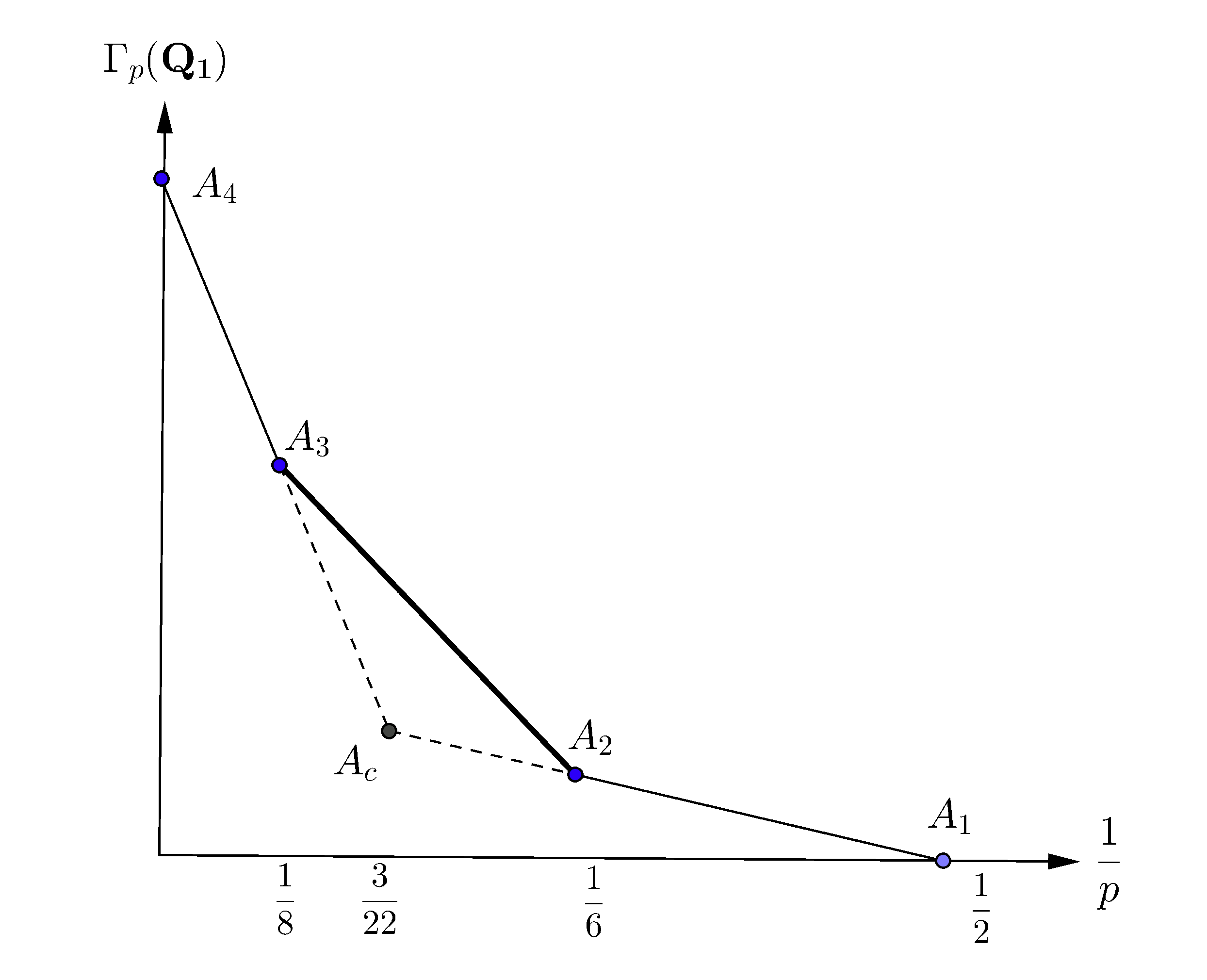}
\caption{$\bq_{1}(\xi)=(\xi_{1}\xi_{2},\xi_{1}\xi_{3},\xi_{2}\xi_{3},\xi_{3}^{2})$}
\label{fig:q1}
\end{subfigure}%
~ 
\begin{subfigure}[t]{0.5\textwidth}
\centering
\includegraphics[width=\textwidth]{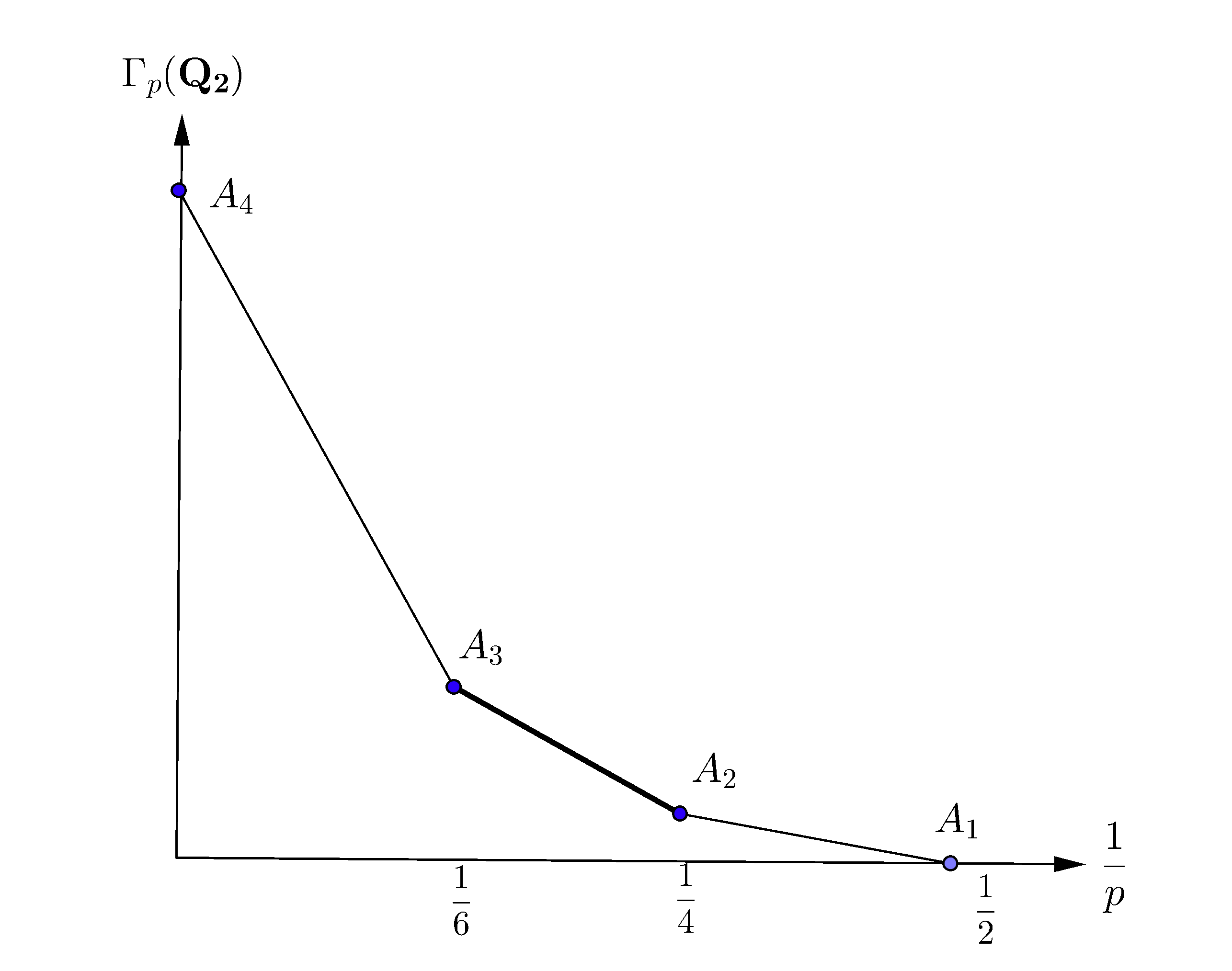}
\caption{$\bq_{2}(\xi)=(\xi_{1}^{2},\xi_{2}^{2}+\xi_{1}\xi_{3})$}
\label{fig:q2}
\end{subfigure}
\caption{Decoupling exponents for $\bq_{1},\bq_{2}$.}
\label{fig:q1+q2}
\end{figure*}
The kinks $A_2,A_3$ in the graph \ref{fig:q1} and $B_{2},B_{3}$ in the graph \ref{fig:q2} are called critical points: After proving sharp decoupling inequalities at these two points, we can simply interpolate them with trivial estimates at $L^2$ and $L^{\infty}$ and obtain sharp $\ell^{p}L^{p}$ decoupling inequalities for every $p\ge 2$.
One reason for picking these two examples is: On one hand, Graph \ref{fig:q1} and Graph \ref{fig:q2} look similar, in that they both have two critical points; on the other hand, the nature of the critical points in the respective graphs is entirely different, and this can sometimes be misleading when it comes to trying to come up with a unified proof strategy for both examples.
In the following several paragraphs, let us try to explain the different nature of the critical points, together with one main ingredient of the paper---scale-dependent Brascamp-Lieb inequalities.

The tuple $\bq_{1}$, together with the linear forms $\xi_1, \xi_2, \xi_3$, is the Arkhipov-Chubarikov-Karatsuba system (ACK system in short) generated by the monomial $\xi_1 \xi_2\xi_3^2$, restricted to the degree $2$.
To be more precise, we take all possible partial derivatives of $\xi_1 \xi_2\xi_3^2$, collect the resulting terms, and throw away the terms that are of degree higher than 2.
Decoupling exponents for all ACK systems were found in \cite{MR4031117}.
The main argument to handle $\bq_1$ there was purely guided by the ``fake'' kink $A_{c}$ and the critical points $A_2$ and $A_3$ were deliberately neglected; see \cite[Section 7]{MR3994585} for a detailed discussion on the critical points and the ``fake" kink in Parsell--Vinogradov systems.
Parsell-Vinogradov systems are special cases of ACK systems and that discussion applies equally well to ACK systems.
The example \eqref{201112e1_28} is perhaps the simplest system of quadratic monomials that admits more than one critical point, and is therefore discussed here in detail.%
The way $A_c$ appears in \cite{MR4031117} can be summarized as follows: For all $p\ge 2$, the article \cite{MR4031117} uses only the global/scale-invariant Brascamp-Lieb inequality due to Bennett, Carbery, Christ and Tao \cite{MR2377493}; the difference between $p>p_c:=22/3$ and $p<p_c$ is in the scheme of induction on scales there.

Let us describe the current argument of handling $\bq_1$.
In the current paper, we still use the argument of induction on scales (see Section~\ref{sec:induction-on-scales}), and a phase transition also happens at the ``fake" kink $A_c$.
However, for $p \leq p_{c}$, we will use local Brascamp-Lieb inequalities as in \cite[Theorem 2.2]{MR2661170}, and for $p \geq p_{c}$, we will use discrete Brascamp-Lieb inequalities as in \cite[Theorem 2.5]{MR2661170}.
For $p=p_{c}$, local and discrete Brascamp-Lieb inequalities coincide and become scale-invariant, as in \cite[Theorem 1.15]{MR2377493}.
The significance of local and discrete Brascamp-Lieb inequalities may not be fully reflected from this example, as they coincide at some point.
The next example explains what we could do in a case where they do not coincide at any point.

Let us turn to the tuple $\bq_{2}$.
As mentioned above, sharp decoupling inequalities for $\bq_2$ were proven in \cite{arxiv:1912.03995}.
The tuple $\bq_2$ is an interesting example as it is perhaps the simplest non-trivial example that is degenerate, in the sense that the scale-invariant Brascamp-Lieb inequality cannot be used for any $p \in [2,\infty)$.
To handle this degeneracy, several specialized tools were introduced in \cite{arxiv:1912.03995}, including a partial small ball decoupling inequality for the tuple $(\xi_1\xi_2, \xi_2^2)$.
In \cite{arxiv:1912.03995}, sharp decoupling inequalities were first proved at the critical points $B_2$ and $B_3$, and interpolation with trivial estimates at $B_1$ and $B_4$ was used to cover the whole range $p\ge 2$.

Let us describe our current approach.
First of all, the counterpart of $A_c$ in Graph \ref{fig:q2} is not significant anymore.
Secondly, it turns out that for $2 \leq p\leq 4$ we can still use the local Brascamp-Lieb inequality and for $p\geq 6$ the discrete Brascamp-Lieb inequality (it is a general principle that the local Brascamp-Lieb inequality works for small $p$ and the discrete one for large $p$, although the ranges of $p$'s in which they work may be empty).
There is, however, a new regime for $p \in (4,6)$.
Although this segment can be filled in by interpolation between $p=4$ and $p=6$, our proof works directly for all $p$, which becomes necessary when the number of kinks increases further.
To this end, we use the family of scale-dependent Brascamp-Lieb inequalities due to Maldague \cite{arxiv:1904.06450}, which unifies scale-invariant, discrete, and local Brascamp-Lieb inequalities due to Bennett, Carbery, Christ, and Tao \cite{MR2377493,MR2661170}.

\subsection*{Organization of the paper}
In Section~\ref{201031section2} we state a few applications of our main theorem.
In Section~\ref{sec:examples}, we compute the decoupling exponent provided by the main theorem more explicitly for several examples of tuples of quadratic forms $\bq$, including some of those tuples $\bq$ for which sharp decoupling inequalities have been previously established in the literature, and a few tuples $\bq$ (in particular, arbitrary pairs of forms and tuples of simultaneously diagonalizable forms) for which our results are new.

The upper bounds of $\Gamma_{q, p}(\bq)$ in Theorem~\ref{thm:main} with $q\le p$ are proven in Section~\ref{sec:transversality} and Section~\ref{sec:induction-on-scales}.
The lower bounds of $\Gamma_{q, p}(\bq)$ in Theorem~\ref{thm:main} with $q\le p$ are proven in Section~\ref{sec:lower-bd}.
In Section~\ref{201031section8}, we show that the optimal decoupling inequalities for $q>p$ follow from the case $q=p$ of Theorem~\ref{thm:main}.

In Section~\ref{201031section7}, we provide the proofs of Corollary~\ref{optimal-l2-decoupling}, Corollary~\ref{201017coro1.3} and Corollary~\ref{20201021.1.5.}.
In Section~\ref{201031section10}, we prove the Fourier restriction estimate in Corollary~\ref{coro:app_to_restriction}.

\subsection*{Notation}
For two positive constants $A_1, A_2$ and a set $\calI$ of parameters, we use $A_1\lesssim_{\calI} A_2$ to mean that there exists $C>0$ depending on the parameters in $\calI$ such that $A_1\le C A_2$.
Typically, $\calI$ will be taken to be $\Set{\bq, d, n, p, q, \epsilon}$ where $\epsilon>0$ is a small number.
Similarly, we define $A_1\gtrsim A_2$.
Moreover, $A_1\sim A_2$ means $A_1\lesssim A_2$ and $A_1\gtrsim A_2$.

Let $\delta\in (0, 1)$ be a dyadic number.
We denote by $\mc{P}(Q,\delta)$ the dyadic cubes of side length $\delta$ in $Q$ for every dyadic cube $Q \subset [0,1]^d$.
Let $\mc{P}(\delta)$ be the partition of $[0, 1]^d$ into dyadic cubes of side length $\delta$.
Let $\Box$ be a cube with side length $l(\Box)$, we use $c\cdot \Box$ to denote the cube of side length $c\cdot l(\Box)$ and of the same center as $\Box$.

For two linear spaces $V, V'$, we use $V'\leq V$ to mean that $V'$ is a linear subspace of $V$.
For a sequence of real numbers $\Set{A_j}_{j=1}^M$, we abbreviate $\avprod A_{j} := \bigl(\prod_{j=1}^{M} |A_{j}|\bigr)^{1/M}$.
For $E>0$ and a ball $B=B(c_B, r_B)\subset \R^{d+n}$ with center $c_B$ and radius $r_B$, define an associated weight
\begin{equation}\label{201105e1_27}
w_{B, E}(\cdot):=\Big(1+\frac{|\cdot-c_B|}{r_B}\Big)^{-E}.
\end{equation}
The power $E$ is large number depending on $d, n$, e.g., $E=10(d+n)$, and will be omitted from the notation $w_{B, E}$.
All implicit constants in the paper are allowed to depend on $E$.
Also, we define averaged integrals:
\[
\norm{f}_{\avL^{p}(B)}:=(\frac{1}{|B|}\int_B |f|^p )^{1/p} \text{ and } \norm{f}_{\avL^{p}(w_B)}:=(\frac{1}{|B|}\int |f|^p w_B)^{1/p}.
\]
For a dyadic box $\Box\subset [0, 1]^d$, a function $f_{\Box}$ is always implicitly assumed to satisfy $\supp \widehat{f}_{\Box}\subset \calU_{\Box}$, unless otherwise stated.

We would like to make the convention that all vectors are column vectors, unless they are variables of functions or otherwise stated.
Below are a few more conventions we make on notation: We will use dyadic cubes of side lengths $\delta,$ $\delta^b$ with $b<1$ and $1/K_j$ with $1\le j\le d$.
One can always keep in mind that $\log_{K_j}\log_{K_j} (1/\delta)\ge K_j$.
We will always use $\Box$ to denote a dyadic cube of the smallest scale $\delta$, $J$ to denote a dyadic cube of an intermediate scale $\delta^b$, and $W$ or $W_j$ to denote a dyadic cube of a large scale $1/K_j$.
We will introduce certain multi-linear estimates during the proof, and the degree of the multi-linearity will always be called $M$.

\subsection*{Acknowledgment}
The authors would like to thank the referees for reading this paper carefully and for their valuable suggestions. S.G.\ was supported in part by the NSF grants DMS-1800274 and DMS-2044828.
R.Z.\ was supported by the NSF grant DMS-1856541, DMS-1926686 and by the Ky Fan and Yu-Fen Fan Endowment Fund at the Institute for Advanced Study.
Part of the work was done during a visit of S.G.\ and R.Z.\ to Bonn in the summer of 2019, partially funded by the Deutsche Forschungsgemeinschaft (DFG, German Research Foundation) under Germany's Excellence Strategy -- EXC-2047/1 -- 390685813.
They would like to thank the Hausdorff Center for Mathematics for the hospitality during their stay.

\section{Some Applications}\label{201031section2}

\subsection{Exponential sum estimates.}

Let $\bq=(Q_1, \dots, Q_n)$ be a collection of quadratic forms of integral coefficients defined on $\R^d$.
Let $\bfww=(w_1, \dots, w_d)\in \N^d$.
\begin{corollary}\label{211127coro2.1.}
For every $d, n\ge 1$, every $p\ge 2, \epsilon>0$, there exists $C_{\bq, \epsilon, p}$ such that
\begin{equation}\label{201031e2_1}
\norm[\Big]{\sum_{1\le d'\le d}\sum_{0\le w_{d'}\le W} e^{2\pi i(\bfww\cdot x+ \bq(\bfww)\cdot y)}}_{L^p([0, 1]^d\times [0, 1]^n)}\le C_{\bq, \epsilon, p} W^{\Gamma_p(\bq)+\frac{d}{p}+\epsilon}
\end{equation}
for every integer $W$.
\end{corollary}

If $\Gamma_p(\bfQ)=d(1/2-1/p)$, then the above corollary says that
\begin{equation}\label{201031e2_1zzz}
\begin{split}
& \norm[\Big]{\sum_{1\le d'\le d}\sum_{0\le w_{d'}\le W} e^{2\pi i(\bfww\cdot x+ \bq(\bfww)\cdot y)}}_{L^p([0, 1]^d\times [0, 1]^n)}\\
& \le C_{\bq, \epsilon, p} W^{\epsilon} \norm[\Big]{\sum_{1\le d'\le d}\sum_{0\le w_{d'}\le W} e^{2\pi i(\bfww\cdot x+ \bq(\bfww)\cdot y)}}_{L^2([0, 1]^d\times [0, 1]^n)},
\end{split}
\end{equation}
by which we mean square root cancellation holds for the exponential sum at such $p$.

The derivation of exponential sum estimates of the form in the above corollary from decoupling inequalities has been standard, see for instance Section 2 \cite{MR3374964} and Section 4 \cite{MR3548534}.
We will not repeat the argument here.

Let $s\ge 1$ be an integer.
Consider the system of Diophantine equations
\begin{equation}\label{201023e2_1}
\begin{split}
& \bfww_1+\dots+\bfww_s=\bfww_{s+1}+\dots+\bfww_{2s},\\
& \bq(\bfww_1)+\dots+\bq(\bfww_s)=\bq(\bfww_{s+1})+\dots+\bq(\bfww_{2s}).
\end{split}
\end{equation}
For a large integer $W$, let $\mc{J}_{\bq}(W)$ be the number of solutions to \eqref{201023e2_1}
where $0\le w_{d'}\le W$ for every $d'$.
As a immediate corollary of \eqref{201031e2_1}, we obtain
\begin{corollary}
For every $d, n\ge 1$, integer $s\ge 1$, and every $\epsilon>0$, there exists $C_{\bq, \epsilon, s}$ such that
\begin{equation}
\mc{J}_{\bq}(W)\le C_{\bq, \epsilon, s} W^{2s \Gamma_{2s}(\bq)+d+\epsilon},
\end{equation}
for every $W$.
\end{corollary}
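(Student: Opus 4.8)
The plan is to recognize $\mc{J}_{\bq}(W)$ as a mean value over the torus of the exponential sum already appearing in \eqref{201031e2_1}, and then apply that estimate with $p=2s$. Concretely, put
\[
F_W(x,y):=\sum_{\bfww\in\{0,1,\dots,W\}^d} e^{2\pi i(\bfww\cdot x+\bq(\bfww)\cdot y)},\qquad (x,y)\in[0,1]^d\times[0,1]^n .
\]
First I would expand $|F_W|^{2s}=F_W^{\,s}\,\overline{F_W}^{\,s}$ into a sum of characters $(x,y)\mapsto e^{2\pi i(\mathbf k\cdot x+\mathbf l\cdot y)}$ indexed by $2s$-tuples $(\bfww_1,\dots,\bfww_{2s})$ with entries in $\{0,\dots,W\}$, where $\mathbf k=\bfww_1+\dots+\bfww_s-\bfww_{s+1}-\dots-\bfww_{2s}$ and $\mathbf l=\bq(\bfww_1)+\dots+\bq(\bfww_s)-\bq(\bfww_{s+1})-\dots-\bq(\bfww_{2s})$. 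Since $\bq$ has integer coefficients, all these vectors $\mathbf k,\mathbf l$ lie on the integer lattice, so integrating over $[0,1]^{d+n}$ and using orthogonality of the characters annihilates every term except those for which $\mathbf k=\mathbf 0$ and $\mathbf l=\mathbf 0$; these are precisely the solutions of the system \eqref{201023e2_1}. Hence
\[
\int_{[0,1]^d\times[0,1]^n}|F_W(x,y)|^{2s}\,dx\,dy=\mc{J}_{\bq}(W),\qquad\text{equivalently}\qquad \mc{J}_{\bq}(W)=\|F_W\|_{L^{2s}([0,1]^{d+n})}^{2s}.
\]

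Next I would invoke \eqref{201031e2_1} with $p=2s$: for every $\epsilon>0$ there is a constant $C_{\bq,\epsilon,s}$ with $\|F_W\|_{L^{2s}([0,1]^{d+n})}\le C_{\bq,\epsilon,s}\,W^{\Gamma_{2s}(\bq)+\frac{d}{2s}+\epsilon}$. Raising both sides to the $2s$-th power gives $\mc{J}_{\bq}(W)\le C_{\bq,\epsilon,s}^{2s}\,W^{2s\Gamma_{2s}(\bq)+d+2s\epsilon}$, and relabeling $2s\epsilon$ as $\epsilon$ (and the constant accordingly) yields the asserted bound for every positive integer $W$ (the case $W=0$ being trivial).

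There is essentially no serious obstacle here: this is the standard transference between the number of solutions of a homogeneous Diophantine system of the form \eqref{201023e2_1} and the $L^{2s}$-norm of the associated exponential sum, carried out exactly as in \cite[Section 2]{MR3374964} and \cite[Section 4]{MR3548534}. The only point deserving a word of care is the integral-coefficient hypothesis on $\bq$, which is what places the frequencies $\mathbf k,\mathbf l$ appearing in the expansion of $|F_W|^{2s}$ on the integer lattice, so that orthogonality of characters on $[0,1]^{d+n}$ isolates exactly the solution set of \eqref{201023e2_1}.
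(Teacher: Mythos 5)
Your proposal is correct and is exactly the argument the paper intends: the paper states the bound on $\mc{J}_{\bq}(W)$ as an immediate consequence of \eqref{201031e2_1}, and your orthogonality computation (using the integrality of the coefficients of $\bq$) identifying $\mc{J}_{\bq}(W)$ with $\|F_W\|_{L^{2s}([0,1]^{d+n})}^{2s}$, followed by applying \eqref{201031e2_1} with $p=2s$ and rescaling $\epsilon$, is precisely the standard transference the authors have in mind.
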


\subsection{Fourier restriction estimates.}
Let $\bq=(Q_1, \dots, Q_n)$ be a collection of quadratic forms defined on $\R^d$.
We say that $\bq$ is \textit{linearly independent} if $Q_1,\ldots,Q_n$ are linearly independent.
We are interested in the Fourier restriction problems: Find an optimal range of $p$ such that
\begin{equation}\label{1031restriction}
\norm{E_{[0, 1]^d}^{\bq}g}_{L^p(\R^{d+n})}\lesssim_{d, n, p, \bq} \norm{g}_{p}
\end{equation}
holds true for every function $g$.
By a simple change of variables, one can see that the restriction estimate \eqref{1031restriction} cannot hold true for any $p<\infty$ if $\bq$ is linearly dependent.
As an application of Corollary~\ref{1015.1.3}, we prove some restriction estimates for every linearly independent $\bq$ for some range of $p$.

\begin{corollary}\label{coro:app_to_restriction}
Let $\bq=(Q_1, \dots, Q_n)$ be a collection of linearly independent quadratic forms defined on $\R^d$.
Then
\begin{equation}\label{201031e2_5}
\norm{E_{[0, 1]^d}^{\bq}g}_{L^p(\R^{d+n})}\lesssim_{d, n, p, \bq} \norm{g}_{p}
\end{equation}
for every
\begin{equation}\label{201022e2_3}
p>p_{\bq}:=2+ \max_{m\ge 1}\max_{n'\le n}\frac{4n'}{2m+\numvar_{d-m, n'}(\bq)}.
\end{equation}
\end{corollary}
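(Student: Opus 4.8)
The plan is to deduce the restriction estimate from the sharp $\ell^p L^p$ decoupling inequality in Corollary~\ref{1015.1.3} via the standard passage from decoupling to restriction, keeping careful track of the locally constant property on balls of radius $\delta^{-2}$. First I would fix a linearly independent tuple $\bq$ and a $p$ strictly exceeding $p_{\bq}$, and reduce \eqref{201031e2_5} to a bound on a ball $B$ of radius $R = \delta^{-2}$ by a standard limiting argument, so that it suffices to show $\|E^{\bq}_{[0,1]^d} g\|_{L^p(w_B)} \lesssim_{\epsilon} R^{\epsilon}\|g\|_p$ uniformly in $R$. Partitioning $[0,1]^d$ into the cubes $\Box \in \Part{\delta}$ and applying \eqref{equ:decoupling_definition} with $q = p$ (after translating from the extension-operator formulation \eqref{201028e1.2} to the Fourier-support formulation, which are equivalent up to the weights $w_B$), I get
\[
\|E^{\bq}_{[0,1]^d} g\|_{L^p(w_B)} \lesssim_{\epsilon} \delta^{-\Gamma_p(\bq)-\epsilon}\Bigl(\sum_{\Box\in\Part{\delta}} \|E^{\bq}_{\Box} g\|_{L^p(w_B)}^p\Bigr)^{1/p}.
\]
On each cube $\Box$ of side length $\delta$, the function $E^{\bq}_{\Box} g$ has Fourier support in a parallelepiped of dimensions $\sim \delta \times \cdots \times \delta \times \delta^2 \times \cdots \times \delta^2$, hence is essentially constant on dual boxes of volume $\sim \delta^{-d}\delta^{-2n}$; combining this with Bernstein/Hölder on each such box and then summing, one obtains the single-cube bound $\|E^{\bq}_{\Box} g\|_{L^p(w_B)} \lesssim \delta^{d(\frac12-\frac1p)}\delta^{\frac{2n}{p}\cdot\frac{?}{}}\cdots$; more precisely the flat/trivial estimate gives $\|E^{\bq}_{\Box}g\|_{L^p(w_B)} \lesssim |\Box|^{1/2}\,(\delta^{-d-2n})^{1/2-1/p}\|g_{\Box}\|_{L^2}\cdots$. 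I would instead run the cleaner route: $\|E^{\bq}_{\Box}g\|_{L^p(w_B)} \lesssim (\delta^{d+2n})^{-(\frac12-\frac1p)}\,\delta^{d/2}\,\|g\|_{L^2(\Box)}$ coming from $L^2$-orthogonality on the ball together with reverse Hölder on dual tubes, and then sum in $\ell^p \hookleftarrow \ell^2$.

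The arithmetic heart is then to check that the resulting exponent of $\delta$ is nonnegative precisely when $p > p_{\bq}$. After inserting $\Gamma_p(\bq)$ from \eqref{eq:dec-exp-ell-p-L-p}, the condition for the total power of $\delta$ to be nonnegative becomes, after term-by-term inspection of the double maximum over $d' = d-m$ and $n'$,
\[
d\Bigl(\tfrac12-\tfrac1p\Bigr) \;\ge\; \bigl(2(d-m)-\numvar_{d-m,n'}(\bq)\bigr)\Bigl(\tfrac12-\tfrac1p\Bigr) - \tfrac{2(n-n')}{p}
\]
for all admissible $m, n'$, which rearranges to $p \ge 2 + \frac{4n'}{2m + \numvar_{d-m,n'}(\bq)}$ (one has to be a touch careful to confirm the $n'$ in the numerator is the right one, tracking the substitution $n' \mapsto n - n'$ from the corollary's indexing to the restriction exponent's indexing, and that the $m=0, n'=n$ term is exactly the borderline giving the main $\ell^p$ range). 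Taking the maximum over $m \ge 1$ and $n' \le n$ recovers $p_{\bq}$ in \eqref{201022e2_3}; the $m=0$ terms only contribute the weaker constraint $p \ge 2$, consistent with the "flat" part $d(\frac12-\frac1p)$ of $\Gamma_p$. I would also need to dispose of the case $n' = 0$ (where the relevant summand of $\Gamma_p$ is just $(2(d-m))(\frac12-\frac1p)$, giving $p \ge 2$) and note $\numvar_{0,0}=0$ is harmless because then $n' = 0$ too.

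The main obstacle I anticipate is not conceptual but bookkeeping: making the "single cube is essentially flat" step fully rigorous with the Schwartz weights $w_{B}$ rather than sharp cutoffs, and making sure the scale $R = \delta^{-2}$ (rather than $\delta^{-1}$) is used consistently — the cube $\Box$ of side $\delta$ has a dual box of dimensions $\delta^{-1}\times\cdots\times\delta^{-2}\times\cdots$, and $w_B$ lives at radius $\delta^{-2}$, so the locally-constant heuristic must be applied on the correct box, and the loss from passing between $L^2(\Box)$ and $L^p(\Box)$ in the frequency variable must be accounted for without destroying the $\epsilon$-room. Since decoupling is applied only at a single scale $\delta$ (no iteration), there is no accumulation of constants, so the $\delta^{-\epsilon}$ loss is harmless and, letting $\delta \to 0$, \eqref{201031e2_5} follows for every $p > p_{\bq}$; strictness of the inequality absorbs the $\epsilon$. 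I would close by remarking that linear independence of $\bq$ is exactly what makes $\numvar_{d,n}(\bq) \ge 1$ and rules out the trivial obstruction noted before the corollary, so $p_{\bq} < \infty$.
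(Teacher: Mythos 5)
There is a genuine gap: the route you propose—apply the linear $\ell^pL^p$ decoupling of Corollary~\ref{1015.1.3} once at scale $\delta$ and then use the flat (locally constant/Bernstein) estimate on each $\delta$-cap—cannot reach the exponent $p_{\bq}$ in \eqref{201022e2_3}. Carrying out your scheme carefully, each cap contributes $\|E^{\bq}_{\Box}g\|_{L^p(w_B)}\lesssim \delta^{\,d-(d+2n)/p}\|g\|_{L^2(\Box)}$ on a ball of radius $\delta^{-2}$ (test $g\equiv 1$), so after summing in $\ell^p$ and using H\"older in frequency the argument closes only if
\begin{equation*}
\Gamma_p(\bq)\;\le\; d-\frac{2d+2n}{p},
\end{equation*}
and inserting \eqref{eq:dec-exp-ell-p-L-p} with $d'=d-m$ this is equivalent to $p\ge 2+\frac{4n'}{2m+\numvar_{d-m,n'}(\bq)}$ for \emph{all} $0\le m\le d/2$, $0\le n'\le n$ — crucially including $m=0$. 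The term $m=0$, $n'=n$ forces $p\ge 2+\frac{4n}{\numvar_{d,n}(\bq)}\ge 2+\frac{4n}{d}$, which is in general strictly larger than $p_{\bq}$ (for the elliptic paraboloid, $n=1$, your route gives only $p>2+\frac4d$, while \eqref{201022e2_3} claims $p>2+\frac{4}{d+1}$). Your "arithmetic heart" hides this: the displayed inequality compares the candidate terms of $\Gamma_p$ against the flat value $d(\frac12-\frac1p)$, which is the condition for $\Gamma_p$ to equal $d(\frac12-\frac1p)$ and rearranges to the \emph{upper} bound $p\le 2+\frac{4(n-n')}{d-2m-\numvar_{d-m,n'}(\bq)}$ (the $p_c$ of \eqref{giantnumber}), not to $p\ge 2+\frac{4n'}{2m+\numvar_{d-m,n'}(\bq)}$; so the claimed recovery of $p_{\bq}$, and in particular the disappearance of the $m=0$ constraint, does not follow from the algebra as written. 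A secondary but real issue: a local bound with loss $\delta^{-\epsilon}$ on balls of radius $\delta^{-2}$ does not globalize to \eqref{201031e2_5} just by "letting $\delta\to 0$"; one needs Tao's epsilon-removal lemma (as in \eqref{201106e9_1}) or an equivalent summation over scales.

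The paper's proof is structured precisely to remove the $m=0$ obstruction: after epsilon removal it runs the Bourgain--Guth broad--narrow decomposition at scale $1/K$. The broad (transverse) part is estimated by the multilinear restriction estimate of Corollary~\ref{MRE}, proved from ball inflation/Kakeya--Brascamp--Lieb, which yields the threshold $p_c\approx\max_{n'}\bigl(2+\frac{2n'}{\numvar_{d,n'}(\bq)}\bigr)$ — note the numerator $2n'$ rather than the $4n'$ your linear-decoupling step would give at full dimension. The narrow part uses only lower-dimensional decoupling, with exponent $\Lambda=\sup_H\Gamma_p(\bq|_H)$ (so only $d'\le d-1$, i.e.\ $m\ge1$), together with parabolic rescaling and an induction on the radius; closing that induction requires $\Lambda<d-\frac{2d+2n}{p}$, and the two constraints $p>p_c$ and $\Lambda<d-\frac{2d+2n}{p}$ combine to exactly $p>p_{\bq}$, using $2\numvar_{d,n'}(\bq)\ge 2+\numvar_{d-1,n'}(\bq)$, which is where linear independence enters. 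Some mechanism of this kind (multilinear restriction for the full-dimensional caps plus induction on scales) is necessary; single-scale linear decoupling plus trivial cap estimates provably falls short of \eqref{201022e2_3}.
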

The proof of this corollary will be presented in Section~\ref{201031section10}.
One significance of this corollary is that the range \eqref{201022e2_3} is sharp for Parsell--Vinogradov systems.
Let us be more precise.
Let $d\ge 2$.
Denote $\xi^{\alpha}:=\xi_1^{\alpha_1}\dots \xi_d^{\alpha_d}$ for $\xi=(\xi_1, \dots, \xi_d)$ and a multi-index $\alpha=(\alpha_1, \dots, \alpha_d)$.
For $\bq:=(\xi^{\alpha})_{|\alpha|= 2}$, we have $n=d(d+1)/2$, and it has been shown by Christ \cite{christthesis} and Mockenhaupt \cite{Mockenhaupt} that \eqref{201031e2_5} holds if and only if
\begin{equation}\label{eq:9}
p>2+\frac{4n}{d+1}=2d+2.
\end{equation}
Let us also mention that, for this tuple $\bq$, the full range of $L^{q}\to L^{p}$ estimates generalizing \eqref{201031e2_5} has been obtained in \cite{MR2064058,MR2137104}.
The next claim shows that the range \eqref{201022e2_3} coincides with \eqref{eq:9}.
\begin{claim}\label{201031claim2_4}
Let $\bq:=(\xi^{\alpha})_{|\alpha|= 2}$.
Then
\begin{equation}\label{eqnclaim24}
\max_{m\ge 1}\max_{n'\le n}\frac{4n'}{2m+\numvar_{d-m, n'}(\bq)}=\frac{4n}{2+\numvar_{d-1, n}(\bq) }=\frac{4n}{d+1} = 2d.
\end{equation}
In other words, the $\max$ is attained at $m=1, n'=n$.
\end{claim}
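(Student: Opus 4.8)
The plan is to compute $\numvar_{d-m,n'}(\bq)$ for the full quadratic Parsell--Vinogradov tuple $\bq = (\xi^\alpha)_{|\alpha|\le 2}$ and then optimize the expression $\frac{4n'}{2m+\numvar_{d-m,n'}(\bq)}$ over $m\ge 1$ and $n'\le n$. Here $n = \binom{d+1}{2} = d(d+1)/2$ counts all monomials $\xi_i\xi_j$ with $i\le j$ (the linear and constant terms are irrelevant since $\NV$ only sees the dependence on variables, but it is cleanest to just take $\bq = (\xi_i\xi_j)_{1\le i\le j\le d}$). The key observation is that this tuple is, up to a linear change of variables in the value domain, the \emph{complete} system of quadratic forms: for any $d'$-dimensional subspace $H \le \R^{d'}$ and any invertible $M \in \mathrm{GL}_{d'}$, the collection $\bq|_H \circ M$ spans the entire $\binom{d'+1}{2}$-dimensional space of quadratic forms in $d'$ variables — because the span of $\{\xi_i\xi_j\}$ is invariant under every linear substitution. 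Consequently, for any $n' \le \binom{d'+1}{2}$, the quantity $\NV(M'\cdot(\bq|_H\circ M))$ is minimized by choosing $M'$ so that the $n'$ resulting forms involve as few variables as possible: using all quadratic monomials in $k$ of the $d'$ variables gives $\binom{k+1}{2}$ forms in exactly $k$ variables. Hence $\numvar_{d',n'}(\bq) = \min\{k : \binom{k+1}{2}\ge n'\}$, and by \eqref{201107e1_16} this equals $\numvar_{d-m,n'}(\bq)$ with $d' = d-m$ (provided $n' \le \binom{d-m+1}{2}$, which is forced since otherwise the infimum over rank-$n'$ matrices $M'$ on $n$ forms is vacuous/gives $n' = n \le \binom{d'+1}{2}$ only when $d' = d$).

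With this formula in hand, the next step is the optimization. The first reduction: for fixed $m$, the function $n' \mapsto \frac{4n'}{2m+\numvar_{d-m,n'}(\bq)}$ is largest when $n'$ is as large as possible relative to $\numvar$, and since $\numvar$ only increases by $1$ each time $n'$ crosses a triangular-number threshold, one checks the maximum over $n'$ is attained at $n' = n = \binom{d+1}{2}$ — but this is only admissible when $\numvar_{d-m,n}(\bq)$ makes sense, i.e. when $\binom{d-m+1}{2} \ge \binom{d+1}{2}$, forcing $m = 0$; for $m\ge 1$ the admissible $n'$ at level $d' = d-m$ is capped at $\binom{d-m+1}{2}$, and one should take $n' = \binom{d-m+1}{2}$, giving $\numvar_{d-m,n'}(\bq) = d-m$ and ratio $\frac{4\binom{d-m+1}{2}}{2m + (d-m)} = \frac{2(d-m)(d-m+1)}{d+m}$. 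So I would set $j = d-m \in \{0,1,\dots,d-1\}$ and maximize $g(j) := \frac{2j(j+1)}{2d-j}$ over $1\le j\le d-1$ (the range relevant after also checking $n' < \binom{d-m+1}{2}$ never does better, which follows from monotonicity). The derivative computation shows $g$ is increasing on $[1,d-1]$ — indeed $g'(j)$ has the sign of $-j^2 + 4dj + (2d - \text{lower order})$, positive for $j \le d-1$ — so the maximum is at $j = d-1$, i.e. $m = 1$, giving $g(d-1) = \frac{2(d-1)d}{d+1} = \frac{d(d-1)\cdot 2}{d+1}$. Finally I reconcile with the claimed value: $\numvar_{d-1,n}(\bq) = d-1$ (since with $d-1$ variables one can write $\binom{d}{2} < n$ forms, so actually $n' = n$ is \emph{not} admissible at $d'=d-1$ — one must recheck) and the stated answer $\frac{4n}{2 + \numvar_{d-1,n}(\bq)} = \frac{4n}{d+1} = \frac{2d(d+1)}{d+1} = 2d$.

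The arithmetic above reveals the one genuine subtlety, which is the main obstacle: the expression $\numvar_{d-1,n}(\bq)$ with $n = \binom{d+1}{2}$ appears to require writing $n$ quadratic forms in $d-1$ variables, which is impossible since only $\binom{d}{2}$ independent such forms exist. The resolution — and the point that needs care — is that $\numvar_{d',n'}$ is defined via an infimum over rank-$n'$ matrices $M' \in \R^{n'\times n}$ acting on the \emph{original} $n$-tuple $\bq$, \emph{then} restricting; one does not restrict first. So $M'\cdot \bq$ is a generic $n'$-tuple of quadratic forms in $d$ variables, and $(M'\cdot\bq)|_H \circ M$ lives in $d' = d-1$ variables — the $\NV$ is then the number of those $d-1$ variables actually appearing. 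When $n' = n$, $M'$ must be invertible, so $M'\cdot\bq$ still spans all quadratics in $d$ variables, and restricting to a hyperplane $H$ the forms $(M'\cdot\bq)|_H$ span all quadratics in $d-1$ variables but there are still $n$ of them (many linearly dependent), and generically each involves all $d-1$ variables — so $\numvar_{d-1,n}(\bq) = d-1$. Thus with $m=1$, $n'=n$ the ratio is $\frac{4n}{2+(d-1)} = \frac{4n}{d+1} = 2d$, and one then checks no other $(m,n')$ exceeds this: for $n' < n$ one has $\numvar_{d-m,n'} \ge \min\{k:\binom{k+1}{2}\ge n'\}$ which grows too fast, and for $m\ge 2$ the denominator penalty $2m$ dominates — a short case analysis using $g(j)$ monotonicity finishes it. I would present the $\numvar$ computation as a lemma, then the optimization as a direct inequality chain.
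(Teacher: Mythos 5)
There is a genuine gap, and it sits exactly where the real work of this claim lies. Your computation of the key quantity is wrong for $d'<d$: the formula $\numvar_{d',n'}(\bq)=\min\{k:\binom{k+1}{2}\ge n'\}$, and the lower bound $\numvar_{d-m,n'}(\bq)\ge\min\{k:\binom{k+1}{2}\ge n'\}$ that you invoke in the final sweep, are both false. The rank-$n'$ matrix $M'$ acts on the original $n$-tuple, and nothing forces the resulting $n'$ forms to remain linearly independent after restriction to $H$; on the contrary, to achieve the infimum one should let $M'$ pick out the combinations killed by the restriction. For $H=\{\xi_{d-m+1}=\dots=\xi_d=0\}$ there are $M:=\binom{m+1}{2}+m(d-m)$ monomials of $\bq$ that vanish identically on $H$, so in fact $\numvar_{d-m,n'}(\bq)=\min\{k:\binom{k+1}{2}+M\ge n'\}$; in particular $\numvar_{d-1,d}(\bq)=0$ (the paper uses precisely this fact in the proof of Claim~\ref{201029claim3.3}), which already contradicts your formula. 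This is not a harmless slip: at $(m,n')=(1,d)$ the true ratio is $\frac{4d}{2+0}=2d$, whereas your estimate would bound it strictly below $2d$, so your argument that ``no other $(m,n')$ exceeds $2d$'' rests on an unsound premise even though the conclusion happens to be true. Your ``cap $n'\le\binom{d-m+1}{2}$'' is likewise spurious --- every $n'\le n$ is admissible for every $m$ --- and the reconciliation paragraph only repairs the single pair $(m,n')=(1,n)$, not the optimization.

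What is missing is the counting inequality that the paper makes the heart of the proof: with $l:=\numvar_{d-m,n'}(\bq)$ one has $n'\le M+\binom{l+1}{2}$ (the $n'$ selected forms can consist only of combinations vanishing on $H$, at most $M$ independent ones, plus quadratics in $l$ variables, at most $\binom{l+1}{2}$ of them), using affine invariance of the complete quadratic system to reduce to a coordinate subspace as in \eqref{201105e1_15}. Given this, the claim reduces to the elementary inequality $4\bigl(M+\tfrac{l(l+1)}{2}\bigr)\le 2d(2m+l)$, equivalently $2(l+m)(l-m+1)\le 2dl$, which holds since $l+m\le d$ and $l-m+1\le l$ for $m\ge1$; together with $\numvar_{d-1,n}(\bq)=d-1$ (where your reasoning is correct, since $M'$ is invertible when $n'=n$) this yields the stated value $2d$. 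Your proposal identifies none of this, so the optimization step is not actually proved.
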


\begin{proof}[Proof of Claim~\ref{201031claim2_4}]
By definition, $\numvar_{d-1, n} (\bq) = d-1$.
Hence it suffices to show the leftmost expression in (\ref{eqnclaim24}) is equal to $2d$.

Fix $m\ge 1$ and $n'\le n$.
Denote $l:=\numvar_{d-m, n'} (\bq)$.
Notice that by the definition of $\numvar_{d-m, n'} (\bq)$, we see that $l+m\le d$.
Our goal is to show that $\frac{4n'}{2m+l} \leq 2d$.
We claim that
\begin{equation}\label{201105e2_9}
n'\leq \binom{l+1}{2} + \binom{m+1}{2}+ m(d-m).
\end{equation}
Indeed, by definition \eqref{201105e1_15}, there exist a linear subspace $H \subset \R^{d}$ of dimension $d-m$ and a linear subspace $\calQ$ of the span of $\bq$ of dimension $n'$ such that the restrictions of the forms from $\calQ$ to $H$ depend only on $l$ variables.
Since the system $\bq=(\xi^{\alpha})_{|\alpha|=2}$ is a basis for the space of all quadratic forms in $d$ variables, the above statement does not depend on $H$ and the $l$ variables inside $H$, so we may assume $H=\Set{\xi: \xi_{d-m+1}=\dotsb=\xi_{d}=0}$ and the $l$ variables are $\xi_{1},\dotsc,\xi_{l}$.
In this case, $\calQ$ is contained in the space of all quadratic forms that depend either only on $\xi_{1},\dotsc,\xi_{l}$, or on at least one of the variables $\xi_{d-m+1},\dotsc,\xi_{d}$.
The right-hand side of \eqref{201105e2_9} is precisely the dimension of the latter space, which concludes the proof of \eqref{201105e2_9}.

Given \eqref{201105e2_9}, it remains to show
\begin{equation}
4(\frac{l(l+1)}{2} + \frac{m(m+1)}{2} + m(d-m)) \leq 2d(2m+l),
\end{equation}
which is equivalent to
\begin{equation}
2(l+m)(l-m+1) \leq 2dl.
\end{equation}
This holds because $l+m \leq d$ and $l-m+1 \leq l$.
\end{proof}

\section{Examples: Old and new}
\label{sec:examples}

\subsection{Example: Hypersurfaces with nonvanishing Gaussian curvatures}

We take $n=1$.
Let $Q$ be a quadratic form depending on $d$ variables.
Without loss of generality we assume that $\numvar_1(Q)=d$.
Via a change of coordinate, we can write $Q(\xi)$ as $\xi_1^2\pm\xi_2^2\pm\dots\pm\xi_d^2$.
This is the (hyperbolic) paraboloid case.
It is easy to see
$\numvar_{0}(Q)= 0, \numvar_{1}(Q) = d$.
\begin{lemma}\label{201021lem3.1}
Let $\widetilde{Q}: \R^d\to \R$ be a quadratic form.
Let $M\in M_{d\times d}$ with rank $d'$.
Then
\begin{equation}
\numvar_1(\widetilde{Q}(\cdot M))\ge \numvar_1(\widetilde{Q}(\cdot))-2(d-d').
\end{equation}
\end{lemma}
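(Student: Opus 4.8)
The plan is to reduce the statement to a purely linear-algebraic fact about quadratic forms and the ranks of the linear maps involved. First I would recall the interpretation of $\numvar_1(\widetilde Q)$ given in \eqref{201105e1_15}: it is the minimal number of variables on which $\widetilde Q|_H \circ N$ depends, where $H$ runs over subspaces of $\R^d$ and $N$ over invertible changes of coordinates on $H$; equivalently, since $n=n'=1$ here and $M'$ is just a nonzero scalar, $\numvar_1(\widetilde Q)$ is the rank of the symmetric bilinear form associated to $\widetilde Q$ (the number of variables a quadratic form genuinely depends on, after a linear change of variables, equals the rank of its Gram matrix). So write $\widetilde Q(\xi) = \xi^T A \xi$ for a symmetric $A$, so that $\numvar_1(\widetilde Q) = \rank(A)$, and $\widetilde Q(\xi M) = \xi^T (M A M^T)\xi$ where I use that all vectors are columns and $\xi M$ means $M^T\xi$ in that convention — the precise placement of transposes will need to be checked against the paper's conventions, but the upshot is that the relevant Gram matrix is $M A M^T$ for some $d\times d$ matrix $M$ of rank $d'$. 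Thus the lemma is equivalent to the inequality
\[
\rank(M A M^T) \ge \rank(A) - 2(d-d').
\]

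To prove this rank inequality I would use the standard fact that multiplying a matrix on either side by a matrix of rank $r$ can drop the rank by at most $d - r$. Concretely, $\rank(M A) \ge \rank(A) - (d - \rank(M)) = \rank(A) - (d-d')$, because the row space of $MA$ is the image under $M$ (acting on rows) of the row space of $A$, and $M$ has a kernel of dimension $d - d'$. Applying the same principle again to multiplication by $M^T$ on the right, $\rank(MAM^T) \ge \rank(MA) - (d - d') \ge \rank(A) - 2(d-d')$. This gives exactly the claimed bound. An alternative, perhaps cleaner, route is to factor $M = M_1 M_2$ with $M_2$ surjective onto $\R^{d'}$ and $M_1$ injective from $\R^{d'}$, reducing to the case where one factor is an honest inclusion and the other an honest projection, and then note that restricting a rank-$r$ form to a codimension-$k$ subspace drops the rank by at most $2k$ (each of the two ``slots'' of the bilinear form loses at most $k$ dimensions). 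Either way the arithmetic is routine.

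I do not expect a serious obstacle here; the only place that requires care is the translation step, namely verifying that $\numvar_1$ of a single quadratic form really is the rank of its associated symmetric matrix and that this behaves correctly under the composition $\widetilde Q \circ M$ with the paper's row-vs-column and transpose conventions (the definition \eqref{eq:numvar} writes $\bq \circ M$, and one must confirm this corresponds to $A \mapsto M A M^T$ rather than something with $M$ on only one side). Once that identification is in place, the inequality $\rank(MAM^T)\ge \rank(A) - 2(d-d')$ is elementary. It is worth remarking that the constant $2$ (rather than $1$) is genuinely necessary because $M$ hits $A$ from both sides, which is also why this lemma, rather than a naive one-sided bound, is the form needed later.
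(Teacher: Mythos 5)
Your proposal is correct and follows essentially the same route as the paper: identify $\numvar_1(\widetilde Q)$ with the rank of the associated symmetric (Hessian/Gram) matrix, reduce to $\rank(M\widetilde Q M^T)\ge \rank(\widetilde Q)-2(d-d')$, and obtain this by applying the rank-drop bound twice, which is exactly Sylvester's rank inequality $\rank(AB)\ge \rank(A)+\rank(B)-d$ that the paper invokes. The transpose/row-column convention you flag is immaterial, since either convention yields a Gram matrix of the form $M A M^{T}$ (or $M^{T}AM$) with $M$ appearing on both sides.
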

\begin{proof}[Proof of Lemma~\ref{201021lem3.1}]
A lemma of this form was already proved and used by Bourgain and Demeter, see Lemma 2.6 in \cite{MR3736493}.
We use $\mathrm{Hess}(\widetilde{Q})$ to denote the Hessian of the quadratic form $\widetilde{Q}$. What we need to prove is, for every $M\in M_{d\times d}$ with rank $d'$, it holds that
\begin{equation}\label{201026e3.2}
\rank (M\mathrm{Hess}(\widetilde{Q}) M^T)\ge \rank(\mathrm{Hess}(\widetilde{Q}))-2(d-d').
\end{equation}
This follows immediately form Sylvester's rank inequality:
\begin{equation}
\rank(AB)\ge \rank(A)+\rank(B)-n,
\end{equation}
for two arbitrary matrices $A, B\in M_{n\times n}$.
\end{proof}

By Lemma~\ref{201021lem3.1}, we know that $\numvar_1(Q|_H)\ge d-2m$ for every linear subspace of codimension $m$, which means $Q$ is non-degenerate.
Therefore we can apply Corollary~\ref{201017coro1.3} and obtain
\begin{equation}
\Gamma_p(Q)=\max\Bigl(d-\frac{2d+2}{p}, d(\frac 1 2-\frac 1 p)\Bigr).
\end{equation}
This recovers the $\ell^p L^p$ decoupling results of Bourgain and Demeter in \cite{MR3374964} and \cite{MR3736493}.
Moreover, if we take $Q(\xi)=\xi_1^2+\dots+\xi_d^2$, then it is elementary to see that $\numvar_1(Q|_H)\ge d-m$ for every linear subspace of co-dimension $m$, which means $Q$ is strongly non-degenerate.
Therefore we can apply Corollary~\ref{optimal-l2-decoupling} and obtain
\begin{equation}
\Gamma_{2, p}(Q)=\max\Bigl(0, \frac d 2-\frac{d+2}{p}\Bigr).
\end{equation}
This recovers the $\ell^2 L^p$ decoupling results of Bourgain and Demeter in \cite{MR3374964}.

\subsection{Example: Co-dimension two manifolds in \texorpdfstring{$\R^4$}{R4}}

Take $d=n=2$.
Let $Q_1(\xi)=A_1 \xi_1^2+2A_2 \xi_1 \xi_2+A_3\xi_2^2$ and $Q_2(\xi)=B_1 \xi_1^2+2B_2 \xi_1 \xi_2+B_3\xi_2^2$.
Under the assumption that
\begin{equation}\label{201017bd_non_dene}
\text{rank}
\begin{bmatrix}
A_1, & A_2, & A_3\\
B_1, & B_2, & B_3
\end{bmatrix}
=2,
\end{equation}
Bourgain and Demeter \cite{MR3447712} proved that
\begin{equation}
\Gamma_p(\bq)=\max\Bigl(2(\frac 1 2-\frac 1 p), 2(1-\frac 4 p)\Bigr),
\end{equation}
with $\bq=(Q_1, Q_2)$.
This decoupling inequality is particularly interesting as it is one key ingredient in Bourgain's improvement on the Lindel\"of Hypothesis in \cite{MR3556291}.

Let us see how Theorem~\ref{thm:main} recovers this result.
We take $d=n=2$ and notice that $\numvar_2(\bq)=2$ (indeed, if $\numvar_2(\bq)\leq 1$, then $Q_{1},Q_{2}$ would be linearly dependent, since the space of quadratic forms in one variable is one-dimensional, contradicting \eqref{201017bd_non_dene}).
Moreover, it is straightforward to see that $\numvar_1(\bq)>0$ as the assumption \eqref{201017bd_non_dene} says that $Q_1$ and $Q_2$ are linearly independent.
Therefore, $\bq$ is non-degenerate in the sense of \eqref{eq:non_degenerate}.
We can apply Corollary~\ref{201017coro1.3} and recover the result of Bourgain and Demeter \cite{MR3447712}.

\subsection{Example: Degenerate three-dimensional submanifolds of \texorpdfstring{$\R^5$}{R5}}\label{subsection_5_3}
Take $d=3$, $n=2$ and $\bq= (\xi_{1}^{2},\xi_{2}^{2}+\xi_{1}\xi_{3})$.
Note that $\numvar_{0}(\bq)=0$, $\numvar_{1}(\bq)=1$, $\numvar_{2}(\bq)=3$, and therefore $\bq$ fails to satisfy the non-degeneracy condition \eqref{eq:non_degenerate}.
On the other hand, one can also compute, for instance via \eqref{201105e1_15}, that $\numvar_{2, 2}(\bq)=1, \numvar_{2, 1}(\bq)=0$ and $\numvar_{d', n'}(\bq)=0$ whenever $d'\le 1$.
We apply Theorem~\ref{thm:main} and obtain that
\begin{equation}
\Gamma_p(\bq)=\max\Bigl(3(\frac 1 2-\frac 1 p), \frac 5 2-\frac 7 p, 3-\frac{10}{p} \Bigr),
\end{equation}
after some elementary computation.
This recovers the main result of Guo, Oh, Roos, Yung and Zorin-Kranich \cite{arxiv:1912.03995}, via an entirely different approach: The proof in \cite{arxiv:1912.03995} relies on bilinear Fourier restriction estimates, small cap decoupling inequalities for the parabola and the manifold $(\xi_1, \xi_2, \xi_1^2, \xi_1\xi_2)$ and a more sophisticated induction argument; while the proof in the current paper relies on more sophisticated Brascamp-Lieb inequalities and multi-linear Fourier restriction estimates.

\subsection{Simultaneously diagonalizable forms}

\begin{corollary}\label{201018coro3_2}
Let $\bq=(Q_1,\ldots,Q_n)$ be a collection of quadratic forms without mixed terms.
Then
\begin{equation}\label{1015.2.3}
\Gamma_p(\bq) =
\max_{0\leq n'\leq n}\Bigl( d \bigl(\frac{1}{2}-\frac{1}{p}\bigr) + \bigl(\frac{1}{2}-\frac{1}{p}\bigr) (d-\numvar_{n'}(\bq)) - \frac{2(n-n')}{p} \Bigr),
\end{equation}
for every $p\ge 2$.
\end{corollary}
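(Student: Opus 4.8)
The plan is to derive \eqref{1015.2.3} from Corollary~\ref{1015.1.3} by showing that, for a diagonal (mixed-term-free) tuple $\bq$, the inner maximum in \eqref{eq:dec-exp-ell-p-L-p} over $0 \le d' \le d$ is always attained at $d'=d$. Concretely, since $\Gamma_p(\bq) = \max_{d/2 \le d' \le d}\max_{0\le n'\le n}\bigl((2d'-\numvar_{d',n'}(\bq))(\tfrac12-\tfrac1p) - \tfrac{2(n-n')}{p}\bigr)$, it suffices to prove that for every $d' < d$ and every $n'$ there is a choice of $n''$ (which I expect to be the same $n'$) with
\begin{equation}
(2d - \numvar_{d,n''}(\bq))\bigl(\tfrac12-\tfrac1p\bigr) - \tfrac{2(n-n'')}{p} \ge (2d' - \numvar_{d',n'}(\bq))\bigl(\tfrac12-\tfrac1p\bigr) - \tfrac{2(n-n')}{p}
\end{equation}
for all $p \ge 2$. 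Taking $n''=n'$, this reduces (using $\tfrac12-\tfrac1p \ge 0$) to the purely combinatorial inequality
\begin{equation}\label{eq:diag-reduction}
2d - \numvar_{d,n'}(\bq) \ge 2d' - \numvar_{d',n'}(\bq), \qquad \text{i.e.}\qquad \numvar_{d',n'}(\bq) - \numvar_{d,n'}(\bq) \ge 2(d'-d),
\end{equation}
and then a direct substitution of \eqref{eq:diag-reduction} with $n''=n'$ into the max gives \eqref{1015.2.3} once one checks the $d'=d$ term of \eqref{eq:dec-exp-ell-p-L-p} literally rewrites as the summand in \eqref{1015.2.3} (expand $2d - \numvar_{d,n'} = d + (d - \numvar_{n'})$).

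The heart of the matter is therefore \eqref{eq:diag-reduction}, which is a statement about how $\numvar_{d',n'}$ behaves under restriction to lower-dimensional subspaces. By \eqref{201107e1_16} it suffices to show $\numvar_{d',n'}(\bq|_H) \ge \numvar_{d,n'}(\bq) - 2(d-d')$ for every subspace $H$ of dimension $d'$. Here I would exploit the diagonal structure: because $\bq$ has no mixed terms, each $Q_i(\xi) = \sum_{k} c_{i,k}\xi_k^2$, so the Hessians of all the $Q_i$ are simultaneously diagonal. For a linear combination $Q = M'\cdot(\bq\circ M)$, its Hessian is $M^T \widetilde{H} M$ where $\widetilde{H}$ is a diagonal matrix (the corresponding combination of the diagonal Hessians), and $\NV$ of a single such form is essentially $\rank(M^T\widetilde H M)$ after optimizing $M$; for a tuple, $\NV(M'\cdot(\bq\circ M))$ is the number of coordinate directions hit by \emph{any} of the forms. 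The key sub-lemma I would isolate is that restricting a diagonal quadratic form (or tuple) to a codimension-$m$ subspace can decrease the number of variables it genuinely depends on by at most $2m$ — which is exactly Lemma~\ref{201021lem3.1}/Sylvester's rank inequality applied to each relevant form, combined with the observation that a direction surviving in the ambient tuple survives in at least one restricted form unless it was "used up" by the $m$ constraints. Summing/optimizing over the $n'$ forms then yields \eqref{eq:diag-reduction}.

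I expect the main obstacle to be the passage from the single-form rank bound to the tuple statement: one must be careful that the subspace $H$, the inner matrix $M\in \mathrm{GL}_{d'}$, and the outer matrix $M'$ achieving $\numvar_{d',n'}(\bq|_H)$ can be lifted to competitors for $\numvar_{d,n'}(\bq)$ without losing more than $2(d-d')$ variables. A clean way to organize this is to note that for simultaneously diagonalizable $\bq$, the quantity $\numvar_{d',n'}(\bq)$ has a transparent description: it equals the minimum, over index sets $S\subseteq\{1,\dots,d\}$ and over $n'$-dimensional coordinate-like choices, of $|S|$ such that some $n'$ independent combinations of the $Q_i$ "live on" $S$ after the $d'$-dimensional restriction; the restriction to a $d'$-dimensional subspace is then governed by how a diagonal form restricts, which is controlled by rank and hence by Sylvester. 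Once this description is in place, \eqref{eq:diag-reduction} becomes almost immediate, and the only remaining bookkeeping is the elementary rewriting that turns the $d'=d$ branch of \eqref{eq:dec-exp-ell-p-L-p} into the displayed form \eqref{1015.2.3}.
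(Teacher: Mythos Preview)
Your proposal is correct and follows essentially the same route as the paper: both reduce via Corollary~\ref{1015.1.3} to the inequality $\numvar_{n'}(\bq|_H)\ge\numvar_{n'}(\bq)-2m$ for every codimension-$m$ subspace $H$ (the paper's \eqref{1015.2.5}, your displayed reduction), and both invoke Lemma~\ref{201021lem3.1} (Sylvester) as the engine. The paper resolves the single-form-to-tuple obstacle you correctly flag by passing to a generic linear combination: since any $\bar\bq=\bq\cdot M'$ still consists of diagonal forms, for generic $\lambda$ the single form $\sum_i\lambda_i\bar Q_i$ satisfies $\numvar_1(\sum_i\lambda_i\bar Q_i)=\NV(\sum_i\lambda_i\bar Q_i)=\NV(\bar\bq)\ge\numvar_{n'}(\bq)$, whence Lemma~\ref{201021lem3.1} applies directly to this one form and yields the contradiction.
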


\begin{proof}[Proof of Corollary~\ref{201018coro3_2}]
We first apply Corollary~\ref{1015.1.3} and obtain
\begin{equation}
\Gamma_p(\bq)=\max_{d/2 \leq d'\leq d} \max_{0\leq n'\leq n} \Big( (2d' - \numvar_{d',n'}(\bq)) \bigl(\frac{1}{2}-\frac{1}{p}\bigr) - \frac{2(n-n')}{p}\Big).
\end{equation}
In order to obtain \eqref{1015.2.3}, it suffices to prove that
\begin{equation}
\max_{d/2\le d'\le d} (2d'-\numvar_{d', n'}(\bq))=2d-\numvar_{n'}(\bq),
\end{equation}
for every $n'$.
By the equivalent definition of $\numvar_{d', n'}(\bq)$ as in \eqref{201105e1_15}, this is equivalent to proving
\begin{equation}
\min_{0 \leq m \leq d/2} \inf_{\substack{H \text{ of }\\Set{co\-dim }\ m}}
\bigl(\numvar_{n'}(\bq|_H)+2m\bigr)=\numvar_{n'}(\bq),
\end{equation}
for every $n'$, which is the same as saying
\begin{equation}\label{1015.2.5}
\numvar_{n'}(\bq)-2m \leq \numvar_{n'}(\bq|_H)
\end{equation}
for every $1 \leq n' \leq n$ and every plane $H$ of codimension $m$ with $1 \leq m \leq d/2$.

We argue by contradiction and assume that
\begin{equation}
\numvar_{n'}(\bq|_H)\le \numvar_{n'}(\bq)-2m-1,
\end{equation}
for some $n'$ and some linear subspace $H$ of codimension $m$.
By the definition \eqref{eq:numvar}, we can find $M_{d-m}\in GL_{d-m}(\R)$ and $M'\in M_{n\times n'}$ of rank $n'$ such that
\begin{equation}\label{201019e3_31}
\NV(\bar{\bp})=\numvar_{n'}(\bq|_H),
\end{equation}
where for $\xi'\in \R^{d-m}$ we define
\begin{equation}\label{201029e3_54}
\begin{split}
\bar{\bp}(\xi')&:=(Q_1|_H(\xi'\cdot M_{d-m}), \dots, Q_n|_H(\xi' \cdot M_{d-m}))\cdot M'\\
& = (Q_1((\xi' \cdot M_{d-m}, {\bf 0})\cdot \rot_H), \dots, Q_n((\xi' \cdot M_{d-m}, {\bf 0})\cdot \rot_H))\cdot M'.
\end{split}
\end{equation}
Here ${\bf 0}=(0, \dots, 0)\in \R^m$ and $\rot_H$ is a rotation matrix acting on $\R^d$.
Let $M_d\in GL_d(\R)$ be a matrix such that
\begin{equation}\label{201029e3_55}
(\xi' \cdot M_{d-m}, {\bf 0})=(\xi', {\bf 0})\cdot M_d, \text{ for every } \xi' \in \R^{d-m}.
\end{equation}
With this notation, we can write
\begin{equation}\label{201029e3_56}
\begin{split}
\bar{\bp}(\xi')&=(Q_1((\xi', {\bf 0})\cdot M_d\cdot \rot_H), \dots, Q_n((\xi', {\bf 0})\cdot M_d\cdot \rot_H))\cdot M'\\
&=:(\bar{P}_1(\xi'), \dots, \bar{P}_{n'}(\xi')).
\end{split}
\end{equation}
Recall \eqref{201019e3_31}.
It implies that
\begin{equation}\label{201019e3_36}
\NV(\lambda_1 \bar{P}_1+\dots+ \lambda_{n'} \bar{P}_{n'})
\le \numvar_{n'}(\bq|_H)
\le \numvar_{n'}(\bq)-2m-1,
\end{equation}
for all choices of $\lambda_1, \dots, \lambda_{n'}\in \R$.
Now if we denote
\begin{equation}
\bar{\bq}(\xi):=(\bar{Q}_1(\xi), \dots, \bar{Q}_{n'}(\xi)):=(Q_1(\xi), \dots, Q_n(\xi))\cdot M',
\end{equation}
then from the definition of $\numvar_{n'}(\bq)$ and the fact that $Q_1, \dots, Q_n$ are diagonal quadratic forms, we can find some $\lambda_1, \dots, \lambda_{n'}$ such that
\begin{equation}
\numvar_1(\lambda_1 \bar{Q}_1+\dots+\lambda_{n'} \bar{Q}_{n'})=
\NV(\lambda_1 \bar{Q}_1+\dots+\lambda_{n'} \bar{Q}_{n'})
\ge \numvar_{n'}(\bq).
\end{equation}
Recall the definition of $\bar{\bp}$ in \eqref{201029e3_54} and the relation in \eqref{201029e3_55} and \eqref{201029e3_56}.
Lemma~\ref{201021lem3.1} then says that
\begin{equation}
\NV(\lambda_1 \bar{P}_1+\dots+ \lambda_{n'} \bar{P}_{n'})
\ge \numvar_{n'}(\bq)-2m,
\end{equation}
which is a contradiction to \eqref{201019e3_36}.
\end{proof}

\begin{corollary}\label{1103.3.5}
For $1\le n'\le n$, define
\begin{equation}\label{201105e3_61}
Q_{n'}(\xi):=\sum_{1\le d'\le d} a_{n', d'} \xi_{d'}^2.
\end{equation}
Then for every $p\ge 2$, with $\bq=(Q_1, \dots, Q_{n})$,
\begin{equation}\label{362best}
\Gamma_p(\bq)=\max{\Big(d(\frac12-\frac1p),2d(\frac12-\frac1p)-\frac{2n}{p} \Big)}
\end{equation}
if and only if, for every $1 \leq n' \leq n$,
every $n\times (\floor{d-\frac{n'd}{n}}+1)$ submatrix of
\begin{equation}
\begin{bmatrix}\label{35assumption}
a_{1, 1}, & a_{1, 2}, & \dots, & a_{1, d}\\
\dots\\
a_{n, 1}, & a_{n, 2}, & \dots, & a_{n, d}
\end{bmatrix}
\end{equation}
has rank at least $n-n'+1$.
Here for $A\in \R$, $\floor{A}$ refers to the largest integer $\leq A$.\footnote{This notation is used only in Corollary~\ref{1103.3.5} and its proof.}
\end{corollary}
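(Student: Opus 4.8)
The plan is to deduce the corollary from the characterisation of best possible $\ell^{p}L^{p}$ decoupling. By Corollary~\ref{201017coro1.3}, \eqref{362best} holds for every $2\le p<\infty$ if and only if $\bq$ is non-degenerate, i.e.\ $\numvar_{d-m,n'}(\bq)\ge n'd/n-2m$ for all $0\le n'\le n$ and $0\le m\le d/2$; so it is enough to show that this family of inequalities is equivalent to the stated rank condition on the matrix $A=(a_{n',d'})$ in \eqref{35assumption}. First I would cut the family down to its $m=0$ members. Since the $Q_{n'}$ have no mixed terms, the inequality \eqref{1015.2.5} established inside the proof of Corollary~\ref{201018coro3_2} (via Lemma~\ref{201021lem3.1}) gives $\numvar_{n'}(\bq|_H)\ge\numvar_{n'}(\bq)-2m$ for every subspace $H$ of codimension $m$, and combining this with \eqref{201107e1_16} yields $\numvar_{d-m,n'}(\bq)\ge\numvar_{n'}(\bq)-2m$. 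Thus non-degeneracy is equivalent to $\numvar_{n'}(\bq)\ge n'd/n$ for every $1\le n'\le n$ (the case $n'=0$ being vacuous).

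Next I would express $\numvar_{n'}(\bq)$ in terms of $A$. For any $M'\in\R^{n'\times n}$ the tuple $M'\cdot\bq$ again has no mixed terms, with coefficient matrix $M'A\in\R^{n'\times d}$, so $\NV(M'\cdot\bq)$ is the number of nonzero columns of $M'A$; and since an invertible linear change of variables on $\R^{d}$ cannot decrease the number of variables a tuple without mixed terms depends on, taking $H=\R^{d}$ in \eqref{201105e1_15} gives
\[
\numvar_{n'}(\bq)=\min_{\substack{M'\in\R^{n'\times n}\\ \rank M'=n'}}\#\{d':\ \text{the }d'\text{-th column of }M'A\text{ is nonzero}\}.
\]
The $d'$-th column of $M'A$ equals $M'a_{d'}$, where $a_{d'}\in\R^{n}$ is the $d'$-th column of $A$, and it vanishes precisely when $a_{d'}\in\ker M'$. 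As $M'$ runs over full-row-rank $n'\times n$ matrices, $\ker M'$ runs over all $(n-n')$-dimensional subspaces of $\R^{n}$, so
\[
d-\numvar_{n'}(\bq)=\max\bigl\{\#\{d':a_{d'}\in U\}\ :\ U\le\R^{n},\ \dim U=n-n'\bigr\},
\]
the largest number of columns of $A$ lying in a common $(n-n')$-plane.

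Finally I would match this up with the rank condition. Because $\numvar_{n'}(\bq)$ is a non-negative integer and $\floor{d-\frac{n'd}{n}}=d-\lceil n'd/n\rceil$, the inequality $\numvar_{n'}(\bq)\ge n'd/n$ is equivalent to $d-\numvar_{n'}(\bq)\le\floor{d-\frac{n'd}{n}}$, i.e.\ to the statement that no $(n-n')$-dimensional subspace of $\R^{n}$ contains more than $\floor{d-\frac{n'd}{n}}$ columns of $A$; equivalently, any $\floor{d-\frac{n'd}{n}}+1$ columns of $A$ span a subspace of dimension at least $n-n'+1$, which is exactly the assertion that every $n\times(\floor{d-\frac{n'd}{n}}+1)$ submatrix of \eqref{35assumption} has rank at least $n-n'+1$. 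Running over $1\le n'\le n$ and combining with the first paragraph proves the equivalence.

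The step I expect to be the most delicate is the middle one: verifying that neither the auxiliary subspace $H$ nor the invertible matrix $M$ appearing in the definition \eqref{201105e1_15} of $\numvar_{d',n'}$ contributes anything once the forms have no mixed terms (so that $\numvar_{n'}(\bq)$ really is computed by the linear algebra of $A$ alone), together with getting the floor/ceiling identities in the last paragraph exactly right so that the integrality of $\numvar_{n'}(\bq)$ converts ``$\ge n'd/n$'' into the submatrix size $\floor{d-\frac{n'd}{n}}+1$. The remaining inputs are all taken verbatim from Corollary~\ref{201017coro1.3}, Lemma~\ref{201021lem3.1}, and the argument already carried out for Corollary~\ref{201018coro3_2}.
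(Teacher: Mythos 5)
Your proposal is correct and follows essentially the paper's own route: both arguments reduce, via Corollary \ref{201017coro1.3} (equivalently Corollary \ref{201018coro3_2} combined with \eqref{1015.2.5}), to the single condition $\numvar_{n'}(\bq)\ge n'd/n$ for all $1\le n'\le n$, and then translate that condition into the stated rank condition through the linear algebra of the coefficient matrix $(a_{n',d'})$, which the paper organizes as two contrapositive implications and you organize as the exact identity $d-\numvar_{n'}(\bq)=\max\{\#\{d':a_{d'}\in U\}:\dim U=n-n'\}$. The one claim you flag as delicate --- that an invertible change of variables cannot decrease $\NV$ of a tuple without mixed terms --- is precisely the kernel computation the paper performs in its ``if'' direction: for invertible $M$ one has $\bigcap_i\ker \mathrm{Hess}(Q_i\circ M)=M^{-1}\bigl(\bigcap_i\ker \mathrm{Hess}\,Q_i\bigr)$, and at most $\dim$-many coordinate vectors can lie in a subspace of that dimension, so this point closes without difficulty.
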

When $n=2$, a condition of the form \eqref{35assumption} already appeared in Heath-Brown and Pierce \cite{MR3652248}.
Let $\bq=(Q_1, Q_2)$ be a pair of quadratic forms with integer coefficients.
Heath-Brown and Pierce \cite{MR3652248} studied the problem of representing a pair of integers $(n_1, n_2)$ by the pair of $(Q_1, Q_2)$ for general $Q_1$ and $Q_2$.
If $Q_1$ and $Q_2$ are assumed to be simultaneously diagonalizable, say of the form \eqref{201105e3_61}, then the condition in \cite{MR3652248} becomes that every $2\times 2$ minor of
\begin{equation}
\begin{bmatrix}\label{35assumptionzz}
a_{1, 1}, & a_{1, 2}, & \dots, & a_{1, d}\\
a_{n, 1}, & a_{n, 2}, & \dots, & a_{n, d}
\end{bmatrix}
\end{equation}
has rank 2, see Condition 3 there.

\begin{proof}[Proof of Corollary~\ref{1103.3.5}]
Let us show the ``only if'' part by contradiction.
Suppose that, for some $1 \leq n' \leq n$, some $n \times (\floor{d-\frac{n'd}{n}}+1)$ submatrix of \eqref{35assumption} has rank $n-n'$ or less.
Then
\begin{equation}
\numvar_{n'}(\bq) \leq d-(\floor{d-\frac{n'd}{n}}+1)< \frac{n'd}{n}.
\end{equation}
Therefore, $\bq$ is not non-degenerate in the sense of~\ref{eq:non_degenerate}, and \eqref{362best} cannot hold true by Corollary~\ref{201017coro1.3}.

Let us show the other direction of the equivalence.
First of all, notice that the two terms on the right hand side of \eqref{362best} match at $p=p_{n, d}:=2+4n/d$.
By Corollary~\ref{201018coro3_2}, it suffices to show that
\begin{equation}
d \bigl(\frac{1}{2}-\frac{1}{p}\bigr) + \bigl(\frac{1}{2}-\frac{1}{p}\bigr) (d-\numvar_{n'}(\bq)) - \frac{2(n-n')}{p} \leq 2d(\frac12-\frac1p)-\frac{2n}{p}
\end{equation}
for every $1 \leq n' \leq n$ and every $p\ge p_{n, d}$.
By rearranging the terms, what we need to show becomes
\begin{equation}
\numvar_{n'}(\bq) \geq n'd/n
\end{equation}
for every $1 \leq n' \leq n$.
We argue by contradiction and assume that
\begin{equation}\label{366contradiction}
\numvar_{n'}(\bq) <n'd/n
\end{equation}
for some $1 \leq n' \leq n$.
By definition, there exist $M \in \R^{d \times d}$ of rank $d$ and $M' \in \R^{n \times n}$ of rank $n'$ such that
\begin{equation}
\numvar_{n'}(\bq)=
\NV(M' \cdot (\bq \circ M)).
\end{equation}
Since the assumption \eqref{35assumption} is invariant under the row operations, we may assume that $M'$ is a diagonal matrix with diagonal entries $1,\dotsc,1,0,\dotsc,0$.
By the inequality \eqref{366contradiction}, we have
\begin{equation}
\label{eq:7}
\dim \bigcap_{i=1}^{n'} \bigcap_{\xi\in\R^{d}} \ker \nabla Q_{i}(\xi) > d - \frac{n'd}{n}.
\end{equation}
It remains to observe that
\[
\ker \nabla Q_{i}(\xi) = \Set{ \eta\in\R^{d} \given \sum_{j=1}^{d}\xi_{j}\eta_{j}a_{i,j} = 0 }
\quad\text{and}
\]
\[
\bigcap_{\xi \in\R^{d}} \ker \nabla Q_{i}(\xi) = \Set{ \eta\in\R^{d} \given \eta_{j}a_{i,j} = 0 \text{ for all } j=1,\dotsc,d },
\]
so that \eqref{eq:7} implies that an $n' \times (\floor{d - \frac{n'd}{n}}+1)$ submatrix of \eqref{35assumption} vanishes.
\end{proof}

\subsection{Decoupling theory for two quadratic forms}
\begin{corollary}\label{201024coro3.5}
Let $\bq=(Q_1, Q_2)$ be two linearly independent quadratic forms defined on $\R^d$ satisfying $\numvar_2(\bq)=d$.
\begin{itemize}
\item[(1)] Let $1\le k< d/2$.
Then $\bq$ satisfies $\numvar_1(\bq)=k$ and the weakly non-degenerate condition if and only
\begin{equation}\label{20201028.376}
\Gamma_p(\bq)=\max\Bigl(d(\frac 1 2-\frac 1 p), (2d-k)(\frac 1 2-\frac 1 p)-\frac 2 p, 2d(\frac 1 2-\frac 1 p)-\frac 4 p\Bigr),
\end{equation}
for every $p\ge 2$.
\item[(2)] $\bq$ is non-degenerate if and only if it is weakly non-degenerate and satisfies $\numvar_1(\bq)\ge d/2$.
\end{itemize}
\end{corollary}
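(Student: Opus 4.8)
The plan is to recast both statements as assertions about the numbers $\numvar_{d',n'}(\bq)$ and then read off $\Gamma_p(\bq)$ from Corollary~\ref{1015.1.3}, which for $n=2$ says that $\Gamma_p(\bq)=\max_{d/2\le d'\le d}\max_{0\le n'\le 2}\ell_{d',n'}(1/p)$, where
\[
\ell_{d',n'}(t):=\bigl(2d'-\numvar_{d',n'}(\bq)\bigr)\bigl(\tfrac12-t\bigr)-2(2-n')t .
\]
Two facts come for free: $\numvar_{d',0}(\bq)=0$ (take $M'=0$) and $\numvar_{d,n'}(\bq)=\numvar_{n'}(\bq)$ by definition. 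The one input that needs an argument is the monotonicity estimate
\[
\numvar_{d-m,1}(\bq)\ge\numvar_1(\bq)-2m\qquad(0\le m\le d);
\]
I would prove it by recalling that for a single quadratic form $Q$ one has $\inf_{M\in\mathrm{GL}}\NV(Q\circ M)=\rank(\mathrm{Hess}\,Q)$, so that by the description \eqref{201105e1_15} the estimate reduces to applying Lemma~\ref{201021lem3.1} to each combination $\lambda_1Q_1+\lambda_2Q_2$ and taking the infimum over $\lambda$. Granting this, part (2) is immediate: for $n=2$ the non-degeneracy condition \eqref{equ:strongly_non_dege} is exactly the two families $\numvar_{d-m,1}(\bq)\ge d/2-2m$ and $\numvar_{d-m,2}(\bq)\ge d-2m$ (for $0\le m\le d/2$; the $n'=0$ case is vacuous), the second being precisely the weakly non-degenerate condition \eqref{equ:weakly_non_dege}, and by the monotonicity estimate the first family is equivalent to its $m=0$ instance $\numvar_1(\bq)\ge d/2$. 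This gives part (2).

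For the forward implication in part (1) I would take $d'=d$ and $n'=0,1,2$ in Corollary~\ref{1015.1.3}: invoking $\numvar_{d,0}(\bq)=0$, $\numvar_1(\bq)=k$ and $\numvar_2(\bq)=d$, these three terms are precisely the three lines appearing on the right of \eqref{20201028.376}, so $\Gamma_p(\bq)$ dominates their maximum. For the opposite inequality I would check that every remaining term is dominated: the $n'=0$ lines by $2d(\tfrac12-\tfrac1p)-\tfrac4p$; the $n'=1$ lines by $(2d-k)(\tfrac12-\tfrac1p)-\tfrac2p$, using the monotonicity estimate in the form $2d'-\numvar_{d',1}(\bq)\le 2d-k$; and the $n'=2$ lines by $d(\tfrac12-\tfrac1p)$, using that weak non-degeneracy gives $\numvar_{d',2}(\bq)\ge 2d'-d$ for $d/2\le d'\le d$. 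This yields \eqref{20201028.376}.

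For the reverse implication in part (1), suppose \eqref{20201028.376} holds for all $p\ge2$. Since $1\le k<d/2$, an elementary computation shows the right side is piecewise linear in $t=1/p$ with kinks at $p=2+\tfrac4{d-k}$ and $p=2+\tfrac4k$; in particular it equals $d(\tfrac12-\tfrac1p)$ on $(2,\,2+\tfrac4{d-k}]$ and it equals the middle line $L_2(t):=(2d-k)(\tfrac12-t)-2t$ on a nonempty interval $I\subset(0,\tfrac12)$ on whose interior $L_2$ strictly exceeds the other two lines. If $\bq$ were not weakly non-degenerate there would be $1\le m_0\le d/2$ with $\numvar_{d-m_0,2}(\bq)\le d-2m_0-1$, hence $\Gamma_p(\bq)\ge\ell_{d-m_0,2}(1/p)\ge(d+1)(\tfrac12-\tfrac1p)$, which exceeds $d(\tfrac12-\tfrac1p)$ for $p>2$ and contradicts \eqref{20201028.376} on $(2,\,2+\tfrac4{d-k}]$; one can also deduce weak non-degeneracy directly from Corollary~\ref{20201021.1.5.}. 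To pin down $\numvar_1(\bq)$, set $k':=\numvar_1(\bq)$. If $k'<k$ then $\ell_{d,1}(t)=L_2(t)+(k-k')(\tfrac12-t)$ strictly exceeds $L_2$ on $(0,\tfrac12)$, hence exceeds $\Gamma_p(\bq)$ on $I$, contradicting $\Gamma_p(\bq)\ge\ell_{d,1}(1/p)$. If $k'>k$ then, since the upper envelope $\Gamma_p(\bq)=\max_{d',n'}\ell_{d',n'}$ coincides with the affine function $L_2$ on the interval $I$, some single $\ell_{d'_*,n'_*}$ must agree with $L_2$ on a subinterval, hence identically; matching the coefficients of $\tfrac12-t$ and of $t$ forces $n'_*=1$ and $2d'_*-\numvar_{d'_*,1}(\bq)=2d-k$, whereas the monotonicity estimate gives $2d'_*-\numvar_{d'_*,1}(\bq)\le 2d-k'<2d-k$, a contradiction. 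Hence $k'=k$, which completes part (1). The step I expect to be most delicate is this last one, the case $k'>k$: it forces one to identify which affine piece of the prescribed envelope is active on which range of $p$ and to extract from the general maximum in Corollary~\ref{1015.1.3} a single line realizing the middle piece; everything else is linear algebra around Lemma~\ref{201021lem3.1}.
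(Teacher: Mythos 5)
Your proposal is correct and follows essentially the same route as the paper: it rests on Corollary~\ref{1015.1.3}, the monotonicity estimate $\numvar_{d-m,1}(\bq)\ge\numvar_1(\bq)-2m$ (which is exactly the paper's Claim~\ref{201029claim3_7}, proved there via Lemma~\ref{201021lem3.1} just as you sketch), and the piecewise-linear analysis of the envelope using $k<d/2$ to isolate the three lines. The only difference is organizational: the paper packages the comparison as the two conditions $\max_{d'}(2d'-\numvar_{d',1})=2d-k$ and $\max_{d'}(2d'-\numvar_{d',2})=d$, whereas you compare individual lines directly and split into the cases $k'<k$ and $k'>k$; the content is the same.
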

\begin{proof}[Proof of Corollary~\ref{201024coro3.5}]
Let us start with proving the first part of the corollary.
We denote the right hand side of \eqref{20201028.376} by $\Gamma'_p(\bq)$.
By Corollary~\ref{1015.1.3}, $\Gamma_p(\bq)$ is given by
\begin{equation}\label{201029e3_65}
\max_{d/2 \leq d' \leq d}\max\Bigl( (2d'-\numvar_{d',2}(\bq))(\frac 1 2-\frac 1 p), (2d'-\numvar_{d',1}(\bq))(\frac 1 2-\frac 1 p)-\frac 2 p, 2d(\frac 1 2-\frac 1 p)-\frac 4 p\Bigr).
\end{equation}
Let us first show that \eqref{20201028.376} holds, that is, $\Gamma_p(\bq)=\Gamma'_p(\bq)$ for every $p\ge 2$, if and only if
\begin{equation}\label{201029e3_66}
\begin{split}
&\max_{d'}(2d'-\numvar_{d',1}(\bq))=2d-k
\\&
\max_{d'}(2d'-\numvar_{d',2}(\bq))=d.
\end{split}
\end{equation}
To show that \eqref{201029e3_66} implies \eqref{20201028.376}, we apply \eqref{201029e3_65}, move the $\max_{d/2\le d'\le d}$ inside the second $\max$ and obtain \eqref{20201028.376}.
To show the other direction of the equivalence, the constraint $k<d/2$ will come into play.
Notice that under this assumption,
\begin{equation}
\Gamma'_p(\bq)=
\begin{cases}
d(\frac 1 2-\frac 1 p) & \text{ if } p\le 2+\frac{4}{d-k},\\
(2d-k)(\frac 1 2-\frac 1 p)-\frac 2 p & \text{ if } 2+\frac{4}{d-k}\le p\le 2+\frac{4}{k},\\
2d(\frac 1 2-\frac 1 p)-\frac 4 p & \text{ if } p\ge 2+\frac 4 k.
\end{cases}
\end{equation}
Note that we are now under the assumption that $\Gamma_p(\bq)=\Gamma'_p(\bq)$ for every $p\ge 2$.
When $p$ is slightly larger than 2, we have
\begin{equation}
\Gamma_p(\bq)=\max_{d/2\le d'\le d} (2d'-\numvar_{d', 2}(\bq))(\frac 1 2-\frac 1 p),
\end{equation}
as the contributions from the other two terms in \eqref{201029e3_65} are already negative.
This implies
\begin{equation}\label{201029e3_69}
\max_{d/2\le d'\le d}(2d'-\numvar_{d', 2}(\bq))=d.
\end{equation}
We use \eqref{201029e3_69} to further simplify $\Gamma_p(\bq)$ to
\begin{equation}
\max\Bigl( d(\frac 1 2-\frac 1 p), \max_{d/2 \leq d' \leq d} (2d'-\numvar_{d',1}(\bq))(\frac 1 2-\frac 1 p)-\frac 2 p, 2d(\frac 1 2-\frac 1 p)-\frac 4 p\Bigr).
\end{equation}
By comparing $\Gamma_p(\bq)$ with $\Gamma'_p(\bq)$ for $2+\frac{4}{d-k}\le p\le 2+\frac{4}{k}$, we see that
\begin{equation}
\max_{d/2\le d'\le d} (2d'-\numvar_{d', 1})=2d-k.
\end{equation}
This finishes the proof that \eqref{20201028.376} is equivalent to \eqref{201029e3_66}.

It remains to show that \eqref{201029e3_66} is equivalent to that $\bq$ is weakly non-degenerate and satisfies $\numvar_1(\bq)=k$.
Since the second equation in \eqref{201029e3_66} is already equivalent to the weakly non-degenerate condition, what we need to prove becomes $\numvar_{d, 1}(\bq)=\numvar_{1}(\bq)=k$ if and only if
\begin{equation}
\max_{d'}(2d'-\numvar_{d',1}(\bq))=2d-k,
\end{equation}
which follows immediately from
\begin{equation}\label{201029e3_73}
\max_{d'}(2d'-\numvar_{d',1}(\bq))=2d-\numvar_{d, 1}(\bq).
\end{equation}
To prove \eqref{201029e3_73}, it suffices to prove
\begin{claim}\label{201029claim3_7}
\begin{equation}
\numvar_{d,1}(\bq)-\numvar_{d',1}(\bq) \leq 2(d-d')
\end{equation}
for every $d/2 \leq d' \leq d$.
\end{claim}
The proof of Claim~\ref{201029claim3_7} will be presented in the end of this subsection.
So far we have finished the proof of the first part of the corollary.

Let us turn to the second part and show that $\bq$ is non-degenerate if and only if it is weakly non-degenerate and satisfies $\numvar_1(\bq) \geq d/2$.
By definition, we need to show that $\numvar_1(\bq) \geq d/2$ if and only if
\begin{equation}\label{20201028.380}
\numvar_{d-m,1}(\bq) \geq d/2-2m
\end{equation}
for every $0 \leq m \leq d/2$.
By taking $m=0$, we see that \eqref{20201028.380} implies $\numvar_1(\bq) \geq d/2$.
The other direction immediately follows from Claim~\ref{201029claim3_7}.
This finishes the second part of the corollary.
\end{proof}

\begin{proof}[Proof of Claim~\ref{201029claim3_7}]
We take $M_0 \in \R^{d \times d}$ of rank $d'$ and $M_0' \in \R^{2 \times 2}$ of rank one such that
\begin{equation}
\numvar_{d',1}(\bq)=
\inf_{\substack{M\in \R^{d\times d}\\ \rank(M)=d'}}
\inf_{\substack{M'\in \R^{n\times n}\\ \rank(M')=1}}\NV(M' \cdot (\bq \circ M))
=\NV(M_0' \cdot (\bq \circ M_0)).
\end{equation}
Therefore there exist $\lambda_1, \lambda_2\in \R$ such that
\begin{equation}
\numvar_{d',1}(\bq)
=\NV(\widetilde{Q}(\cdot M_0))
=\numvar_{d,1}(\widetilde{Q}(\cdot M_0))
\end{equation}
where $\widetilde{Q}=\lambda_1 Q_1+\lambda_2 Q_2$ and $\bq=(Q_1, Q_2)$.
We now apply Lemma~\ref{201021lem3.1} and obtain
\begin{equation}
\begin{split}
\numvar_{d',1}(\bq)-\numvar_{d,1}(\bq) & =\numvar_{d,1}(\widetilde{Q}(\cdot M_0))-\numvar_{d,1}(\bq)
\\&
\geq \numvar_{d,1}(\widetilde{Q}(\cdot M_0))-\numvar_{d,1}(\widetilde{Q})
\\&
\geq -2(d-d').
\end{split}
\end{equation}
This completes the proof of Claim~\ref{201029claim3_7}.
\end{proof}

\section{Transversality}
\label{sec:transversality}

\subsection{Brascamp--Lieb inequalities}
A central tool in most existing proofs of decoupling inequalities are the Brascamp--Lieb inequalities for products of functions in $\R^{m}$ which are constant along some linear subspaces.
Scale-invariant inequalities of this kind have been characterized in \cite{MR2377493}.
A novelty of our approach is that we for the first time take full advantage of scale-dependent versions of Brascamp--Lieb inequalities.
First inequalities of this kind were proved in \cite{MR2377493,MR2661170}, and a unified description taking into account both minimal and maximal scales was obtained in \cite{arxiv:1904.06450}.
We will only use the results of \cite{arxiv:1904.06450} in a special symmetric case when all functions $f_{j}$ below play similar roles.
This special case is captured in the following definition.
\begin{definition}
\label{def:BL}
Let $m,m' \in \N$.
Let $(V_{j})_{j=1}^{M}$ be a tuple of linear subspaces $V_{j} \subseteq \R^{m}$ of dimension $m'$.
For a linear subspace $V \subseteq \R^{m}$, let $\pi_{V}: \R^{m}\to V$ denote the orthogonal projection onto $V$.
For $0 \leq \alpha \leq M$ and $R\geq 1$, we denote by $\BL((V_{j})_{j=1}^{M},\alpha,R, \R^m)$ (for \emph{Brascamp--Lieb constant}) the smallest constant such that the inequality
\begin{equation}
\label{eq:BL}
\int_{[-R,R]^{m}} \avprod_{j=1}^{M} f_j(\pi_{V_{j}}(x))^{\alpha} \dif x
\leq \BL((V_{j})_{j=1}^{M},\alpha,R,\R^m)
\avprod_{j=1}^{M} \bigl( \int_{V_j} f_j(x_j) \dif x_j \bigr)^{\alpha}
\end{equation}
holds for any functions $f_{j} : V_{j} \to [0,\infty)$ that are constant at scale $1$, in the sense that $V_{j}$ can be partitioned into cubes with unit side length on each of which $f_{j}$ is constant.
If the dimension $m$ of the total space $\R^m$ is clear from the context, $\BL((V_{j})_{j=1}^{M},\alpha,R,\R^m)$ is often abbreviated to $\BL((V_{j})_{j=1}^{M},\alpha,R)$.
\end{definition}

We also need a Kakeya variant of Brascamp--Lieb inequalities, in which each function $f_{j}\circ\pi_{V_{j}}$ is replaced by a sum of functions of the form $f_{j,l}\circ\pi_{V_{j,l}}$, where $V_{j,l}$ are different subspaces.
The first almost optimal inequality of this kind was the multilinear Kakeya inequality, proved in \cite{MR2275834}, which generalizes the Loomis--Whitney inequality.
A simplified induction on scales proof was later given by Guth \cite{MR3300318}.
An endpoint version of the multilinear Kakeya inequality was proved by Guth \cite{MR2746348} using the polynomial method.
Endpoint Kakeya type extensions of Brascamp--Lieb inequalities were further developed in \cite{MR3019726,MR3738255,MR4129538}.
It will be convenient to use the following formulation, although a non-endpoint result such as \cite[Theorem 2]{arxiv:1904.06450} would also suffice for the purpose of proving decoupling inequalities with the optimal range of exponents.

\begin{theorem}[{Kakeya--Brascamp--Lieb, \cite{MR4129538}}]
\label{thm:KBL}
Fix integers $m' \leq m$.
Let $\calV_{j}$, $1\le j\le M$, be families of linear subspaces of $\R^{m}$ of dimension $m'$.
Let $1 \leq \alpha \leq M$ and $R\geq 1$.
Assume that
\begin{equation}
\label{eq:unif-bd-BL}
A := \sup_{V_{1} \in \calV_{1}, \dotsc, V_{M} \in \calV_{M}} \BL((V_{j})_{j=1}^{M},\alpha,R) < \infty.
\end{equation}
Then, for any non-negative integrable functions $f_{j,V_{j}} : V_{j} \to \R$ constant at scale $1$, we have
\begin{equation}
\label{eq:KBL}
\int_{B(0,R)} \avprod_{j=1}^M \Bigl( \sum_{V_{j} \in \calV_{j}} f_{j,V_{j}} (\pi_{V_{j}} (x)) \Bigr)^{\alpha} \dif x
\leq
C^{\alpha}A
\avprod_{j=1}^M \Bigl( \sum_{V_{j} \in \calV_{j}} \int_{V_{j}} f_{j,V_{j}} (x) \dif x \Bigr)^{\alpha},
\end{equation}
where the constant $C$ depends only on the dimension $m$.
\end{theorem}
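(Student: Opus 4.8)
The plan is to deduce the Kakeya-type bound \eqref{eq:KBL} from the uniform Brascamp--Lieb bound \eqref{eq:unif-bd-BL} by a polynomial partitioning argument, following the template of Guth's proof \cite{MR2746348} of the endpoint multilinear Kakeya inequality (which is exactly the case where all $V_{j}$ are coordinate hyperplanes and \eqref{eq:BL} is Loomis--Whitney). A softer induction on scales in the spirit of Bennett--Carbery--Tao \cite{MR2275834} would instead yield a non-endpoint variant such as \cite[Theorem 2]{arxiv:1904.06450}, which already suffices for the decoupling applications in this paper; but I will aim for the clean endpoint statement above.

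First I would carry out the standard reductions. By monotone convergence it is enough to treat finite families $\calV_{j}$; by the layer-cake formula applied to each $f_{j,V_{j}}$ (which is constant at scale $1$) together with dyadic pigeonholing in the resulting weights, it is enough to treat the case where every $f_{j,V_{j}}$ is the indicator function of a \emph{slab} $\pi_{V_{j}}^{-1}(C)$ with $C\subseteq V_{j}$ a unit cube, at the cost of enlarging the families; then $\int_{V_{j}} f_{j,V_{j}} \sim 1$ for each slab. Finally, dyadic pigeonholing in the values of the inner sums lets me fix dyadic multiplicities $\lambda_{j}$ and restrict attention to the set through which roughly $\lambda_{j}$ slabs of the $j$-th family pass. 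After these reductions \eqref{eq:KBL} becomes a purely geometric statement bounding how much $M$ transverse ``bushes'' of slabs can overlap on $B(0,R)$, with the degree of transversality measured by $A$ through the single-slab inequality \eqref{eq:BL}.

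The heart of the matter is an induction on the ambient dimension $m$ together with the scale $R$ (equivalently, the total number of slabs). For the inductive step, apply the polynomial ham-sandwich theorem to produce a nonzero polynomial $P$ of degree $O(D)$ whose zero set $Z(P)$ simultaneously bisects the $L^{1}$-masses attached to the configuration; then $B(0,R)\setminus Z(P)$ splits into $\lesssim D^{m}$ cells, and a B\'ezout-type count shows each slab enters only $O(D)$ of them, so a typical cell meets only an $O(D^{1-m})$ fraction of each family. The contribution of slabs confined to a single cell is estimated by the inductive hypothesis inside that cell (at a smaller effective scale, after rescaling) and resummed over cells by H\"older; the contribution of slabs meeting the wall $Z(P)$ is estimated by slicing along $Z(P)$ and invoking the inductive hypothesis in dimension $m-1$. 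The key point in the wall term is that projecting the $V_{j}$ onto the tangent spaces of $Z(P)$ produces $(m'-1)$- or $m'$-dimensional subspaces whose Brascamp--Lieb constants are still controlled by $A$, by monotonicity of Brascamp--Lieb constants under restriction to subspaces --- so no new hypothesis is needed. Choosing $D$ to balance the two contributions closes the induction with a constant $C$ depending only on $m$.

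I expect the main obstacle to be precisely that last point: keeping the final constant dependent on $m$ alone, and not on $M$ or the cardinalities $|\calV_{j}|$. The naive route --- expand $\prod_{j}\bigl(\sum_{V_{j}} f_{j,V_{j}}\bigr)^{\alpha/M}$ into $\sum_{(V_{1},\dots,V_{M})}\prod_{j} f_{j,V_{j}}^{\alpha/M}$, apply \eqref{eq:BL} to each term, and resum --- fails decisively, since $\sum_{i}a_{i}^{s}$ is much larger than $\bigl(\sum_{i}a_{i}\bigr)^{s}$ when $s=\alpha/M<1$ and the sum has many terms, and this is exactly what forces a genuine geometric argument. Within the polynomial method, the delicate bookkeeping is controlling slabs that run nearly tangent to $Z(P)$ over a long portion of $B(0,R)$, so that the cell/wall dichotomy is not clean, and making the dimension reduction in the wall term uniform in the geometry; these points are handled as in \cite{MR2746348}, and in the general Kakeya--Brascamp--Lieb setting as in \cite{MR4129538}.
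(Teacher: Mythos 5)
The paper does not prove this statement at all: Theorem~\ref{thm:KBL} is quoted verbatim from \cite{MR4129538}, so the only ``proof'' in the paper is the citation (and the paper's own remark that the non-endpoint result \cite[Theorem 2]{arxiv:1904.06450} would also suffice matches your opening comment). Your sketch therefore has to stand on its own, and at the decisive points it defers back to \cite{MR2746348} and to \cite{MR4129538} itself --- which is circular, since \cite{MR4129538} is precisely the result to be proved.

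Beyond that structural issue, there are concrete gaps. First, both dyadic pigeonholings (in the weights, and especially ``fix dyadic multiplicities $\lambda_{j}$'') cost factors logarithmic in $R$ and in the cardinalities $\abs{\calV_{j}}$, which is exactly what the endpoint statement with $C=C(m)$ forbids; the weight reduction can be made lossless by homogeneity and duplication of slabs, but the multiplicity pigeonholing cannot, and the endpoint arguments in \cite{MR2746348,MR3019726,MR3738255,MR4129538} are structured precisely to avoid it. Second, the B\'ezout count ``each slab enters only $O(D)$ cells'' is false in the generality needed here: $\pi_{V_{j}}^{-1}(C)$ is a unit neighborhood of a translate of $V_{j}^{\perp}$, of dimension $m-m'$ (equal to $n$, and $n\geq 2$ is the main case of this paper), and such a plank can meet on the order of $D^{m-m'}$ of the $\sim D^{m}$ cells, so the cell/wall numerology you rely on does not close as stated. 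Third, the wall term is not handled by ``monotonicity of Brascamp--Lieb constants under restriction to subspaces'': the hypothesis \eqref{eq:unif-bd-BL} controls only the original tuples $(V_{j})_{j=1}^{M}$ at scale $R$, and extracting usable Brascamp--Lieb data for the configuration relative to the tangent spaces of $Z(P)$ is exactly the new difficulty that \cite{MR3738255} and \cite{MR4129538} are about; it does not follow from \eqref{eq:BL} by any elementary monotonicity. Relatedly, \cite{MR2746348} is not a cells-and-walls induction on dimension at all --- it is a polynomial ham-sandwich plus directed-volume (visibility) counting argument --- so ``handled as in \cite{MR2746348}'' points to steps that do not exist in that reference.
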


The uniform bound \eqref{eq:unif-bd-BL} is clearly necessary for \eqref{eq:KBL} to hold.
In the scale invariant case, such uniform bounds for Brascamp-Lieb constants were obtained in \cite{MR3783217,MR3723636}.
We need the following corresponding result in the scale-dependent case.

\begin{theorem}[{\cite[Theorem 3]{arxiv:1904.06450}}]
\label{thm:BL}
In the situation of Definition~\ref{def:BL}, fix a tuple $(V_{j})_{j=1}^{M}$ and an exponent $1\leq \alpha \leq M$.
Let
\begin{equation}
\label{eq:kappa}
\kappa := \sup_{V \leq \R^{m}} \Bigl( \dim V - \frac{\alpha}{M} \sum_{j=1}^{M} \dim \pi_{V_{j}} V \Bigr),
\end{equation}
where the supremum is taken over all linear subspaces of $\R^{m}$.

Then there exists a constant $C_0<\infty$ and a neighborhood of the tuple $(V_{j})_{j=1}^{M}$ in the $M$-th power of the Grassmanian manifold of all linear subspaces of dimension $m'$ of $\R^{m}$ such that, for any tuple $(\tilde{V}_{j})_{j=1}^{M}$ in this neighborhood and any $R\geq 1$, we have
\begin{equation}
\label{eq:BL-uniform-bound}
\BL((\tilde{V}_{j})_{j=1}^{M},\alpha,R)
\leq
C_0 R^{\kappa}.
\end{equation}
\end{theorem}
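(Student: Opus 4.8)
The plan is to split the statement into two independent ingredients: a \emph{pointwise} estimate $\BL((V_j)_{j=1}^M,\alpha,R)\lesssim_{m,M,\alpha}R^{\kappa((V_j))}$ valid for \emph{every} tuple of subspaces, with a constant depending only on $m,M,\alpha$; and the topological fact that the defect exponent $\kappa(\cdot)$ defined by \eqref{eq:kappa} is upper semicontinuous on the product of Grassmannians. Granting these, the theorem is immediate: choose a neighbourhood $\mathcal N$ of $(V_j)_{j=1}^M$ on which $\kappa((\tilde V_j))\le\kappa((V_j))$, and then $\BL((\tilde V_j)_{j=1}^M,\alpha,R)\lesssim R^{\kappa((\tilde V_j))}\le R^{\kappa((V_j))}$ for all $(\tilde V_j)\in\mathcal N$ and all $R\ge1$. (A datum-dependent constant on a small $\mathcal N$ would in fact suffice, so a slightly cheaper route is to prove the pointwise bound only with datum-dependent constants and then invoke the stability of the underlying scale-invariant Brascamp--Lieb constants from \cite{MR3783217,MR3723636} to get uniformity over $\mathcal N$; but the dimension-only version is cleaner.)

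Upper semicontinuity of $\kappa$ is the easy part. For a fixed subspace $V\le\R^m$, the rank of $\pi_{\tilde V_j}|_V$ is lower semicontinuous in $\tilde V_j$, so $\dim\pi_{\tilde V_j}V\ge\dim\pi_{V_j}V$ once $\tilde V_j$ is close enough to $V_j$; hence the defect functional $g_{\tilde V}(V):=\dim V-\frac{\alpha}{M}\sum_j\dim\pi_{\tilde V_j}V$ obeys $g_{\tilde V}(V)\le g_V(V)\le\kappa((V_j))$. A compactness argument over the Grassmannian in which $V$ ranges, using joint lower semicontinuity of $(\tilde V_j,V)\mapsto\dim\pi_{\tilde V_j}V$, upgrades this to a single neighbourhood $\mathcal N$ that works for all $V$ simultaneously, i.e. $\kappa((\tilde V_j))=\sup_V g_{\tilde V}(V)\le\kappa((V_j))$ on $\mathcal N$. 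I would also record here that $\kappa\ge0$ always (take $V=\{0\}$), so that $R^\kappa\ge1$.

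The substance is the pointwise bound, which I would obtain by the Bennett--Carbery--Christ--Tao machinery run at a finite scale, i.e. by reconstructing \cite[Theorem 3]{arxiv:1904.06450}. The functional $g_V(\cdot)$ is supermodular, because $\dim$ is modular and each $V\mapsto\dim\pi_{V_j}V$ is submodular; hence the subspaces attaining $\kappa$ form a sublattice, with a unique minimal element $V_0$, which equals $\{0\}$ exactly when $\kappa=0$. I would then run the usual dichotomy. If the datum is \emph{simple} -- meaning $g_V(V)<\kappa$ for every $V$ with $0\subsetneq V\subsetneq\R^m$ -- then one checks $\kappa=\max(0,m-\alpha m')=g_V(\R^m)$ or $0$, and the estimate follows from a local Brascamp--Lieb / heat-flow inequality in the spirit of \cite[Theorem 2.2]{MR2661170}: mollifying the $f_j$ to scale $1$ and comparing the left side of \eqref{eq:BL} on $[-R,R]^m$ to the full-space product costs exactly a factor $R^{m-\alpha m'}$, with a dimension-only constant. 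If the datum is not simple there is a proper nonzero critical subspace $V_0$, and one factors \eqref{eq:BL} through it: slicing $\R^m=V_0\oplus V_0^\perp$ and decomposing each $\pi_{V_j}$ accordingly bounds the left side by the product of the Brascamp--Lieb expression for the restricted datum on $V_0$ and that for the quotient datum on $\R^m/V_0$, while a short linear-algebra computation shows the defect is \emph{additive}, $\kappa=\kappa_{\mathrm{restricted}}+\kappa_{\mathrm{quotient}}$; induction on the ambient dimension $m$ then closes the estimate. This factorization also automatically absorbs the contribution $R^{\dim\bigcap_j V_j^\perp}$ from directions orthogonal to all the $V_j$.

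The main obstacle is making this dichotomy produce the single clean power $R^\kappa$ with uniform constants: one must verify that the defect is \emph{exactly} additive under factorization (not merely subadditive), that the local/heat-flow step yields precisely the exponent $m-\alpha m'$ with a dimension-only constant even though a general datum satisfies neither the scaling nor the dimension condition, and that no spurious $(\log R)$ or $R^\epsilon$ losses accumulate over the $O(m)$ inductive steps -- this last point is exactly what separates \cite{arxiv:1904.06450} from \cite{MR2377493,MR2661170}. If one is willing to tolerate an $R^{\kappa+\epsilon}$ bound at an intermediate stage (for instance from a crude submultiplicativity $\BL(R)\lesssim_m\BL(r)\,\BL(R/r)$ together with dyadic pigeonholing), the clean exponent can be recovered afterwards by a tensor-power argument: the defect tensorizes, $\kappa((V_j^{\oplus N}))=N\kappa((V_j))$, while testing \eqref{eq:BL} against tensor products gives $\BL((V_j^{\oplus N}),\alpha,R)\ge\BL((V_j),\alpha,R)^N$, so applying the $\epsilon$-bound to the $N$-fold datum in $\R^{mN}$ and letting $N\to\infty$ removes the $\epsilon$ and any residual dependence of the constant on the datum.
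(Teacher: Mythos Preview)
The paper does not prove this theorem: it is quoted verbatim as \cite[Theorem~3]{arxiv:1904.06450} and used as a black box, so there is no proof in the paper to compare your proposal against. Your sketch---upper semicontinuity of $\kappa$ on the product of Grassmannians combined with a pointwise bound $\BL\lesssim R^{\kappa}$ obtained via the BCCT critical-subspace factorization and induction on the ambient dimension---is indeed the strategy of Maldague's paper that the authors cite, so in that sense your outline is consistent with the intended source; but within the present paper the statement is simply imported, and you were not expected to reconstruct its proof.
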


\subsection{Transversality for quadratic forms}
Let $\bq=(Q_1, \dots, Q_n)$ be a sequence of quadratic forms defined on $\R^d$.
The subspaces in the subsequent application of Kakeya--Brascamp--Lieb inequalities will be the tangent spaces to the manifold $S_{\bq}$:
\begin{equation}
\label{eq:tangent-space}
V_{\xi}=V_{\xi}(\bq) := \lin \Set{ (e_{j}, \partial_{j}\bq(\xi)), j=1,\dots,d },
\quad \xi \in \R^{d}.
\end{equation}
Here $e_j$ is the $j$-th coordinate vector and $\lin$ refers to linear span.
Transversality of pieces of this manifold will be measured by the exponent $\kappa$ defined in \eqref{eq:kappa} evaluated at tangent spaces somewhere at the respective pieces: The smaller the exponent, the more transverse are the pieces.
It is an observation going back to \cite{MR3614930} (for scale-invariant Brascamp--Lieb inequalities) that the most transverse situations arise when the pieces are not concentrated near a low degree subvariety in the following sense.

\begin{definition}
\label{def:uniform}
A subset $\calW \subseteq \Part{1/K}$ will be called \emph{$\theta$-uniform} if, for every non-zero polynomial $P$ in $d$ variables with real coefficients of degree $\leq d$, we have
\[
\abs{\Set{W \in \calW \given 2W \cap Z_{P}\neq \emptyset}} \leq \theta \abs{\calW}.
\]
Here $Z_P$ refers to the zero set of $P$.
When using the notation $\calW = \Set{W_{1},\dotsc, W_{M}} = \Set{ W_{j} }_{j=1}^{M}$ for $\theta$-uniform sets, we always mean that the $W_{j}$'s are pairwise distinct.
\end{definition}

\begin{lemma}
\label{lem:BL-uniform}
Let $\theta \in [0,1]$, $\alpha \geq 1$, and $K \in 2^{\N}$.
Then there exists $C_{\theta,K,\alpha} < \infty$ such that, for every $\theta$-uniform set $\calW = \Set{W_{1},\dotsc, W_{M}} \subseteq \Part{1/K}$ with $\alpha\leq M$ and every $R\geq 1$, we have
\[
\sup_{\xi_{j}\in W_{j}} \BL((V_{\xi_{j}})_{j=1}^{M},\alpha,R, \R^{d+n})
\leq
C_{\theta,K,\alpha} R^{\kappa_{\bq}(\alpha\cdot(1-\theta))},
\]
where
\begin{equation}
\label{eq:kappa(alpha)}
\kappa_{\bq}(\alpha)
:=
\sup_{V \leq \R^{d+n}} \Big(\dim V - \alpha \sup_{\xi \in \R^{d}}\dim \pi_{V_{\xi}} V\Big).
\end{equation}
\end{lemma}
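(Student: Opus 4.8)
The plan is to combine Theorem~\ref{thm:BL} with a covering argument over the polynomial Wolff-type decomposition implicit in the definition of $\theta$-uniformity. The starting observation is that Theorem~\ref{thm:BL} applied in $\R^{m} = \R^{d+n}$ gives, for a \emph{fixed} tuple $(V_{\xi_j})_{j=1}^M$, the bound $\BL((V_{\xi_j})_j,\alpha,R) \lesssim R^{\kappa}$ with $\kappa = \sup_{V \leq \R^{d+n}} \bigl( \dim V - \frac{\alpha}{M}\sum_{j=1}^M \dim \pi_{V_{\xi_j}} V \bigr)$, \emph{together with} uniformity of this bound over a neighborhood of the tuple in the Grassmannian power. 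The issue is that the $\xi_j$ range over the cubes $W_j$, so the neighborhood statement is not immediately enough; but since the $V_\xi(\bq)$ depend polynomially (indeed smoothly) on $\xi$ and $K$ is fixed, one can cover each $W_j$ by finitely many (depending on $K$, $\bq$) sub-pieces on which the tangent spaces stay in a single Grassmannian neighborhood of some reference tuple, and take the worst constant. So the whole problem reduces to bounding, uniformly over choices $\xi_j \in W_j$, the quantity $\kappa$ above, and showing $\kappa \leq \kappa_{\bq}(\alpha(1-\theta))$.

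The key step, then, is the combinatorial estimate: for any subspace $V \leq \R^{d+n}$,
\[
\dim V - \frac{\alpha}{M}\sum_{j=1}^M \dim \pi_{V_{\xi_j}} V
\leq
\dim V - \alpha(1-\theta) \sup_{\xi\in\R^d} \dim \pi_{V_\xi} V.
\]
To prove this it suffices to show $\frac{1}{M}\sum_{j=1}^M \dim\pi_{V_{\xi_j}}V \geq (1-\theta)\sup_\xi \dim\pi_{V_\xi}V$, i.e.\ that for ``most'' of the cubes $W_j$ the projection $\pi_{V_{\xi_j}}V$ has the maximal possible dimension $r := \sup_\xi \dim\pi_{V_\xi}V$. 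This is where $\theta$-uniformity enters. The function $\xi \mapsto \dim\pi_{V_\xi}V$ is lower semicontinuous and equals $r$ on the complement of a proper algebraic subvariety $Z \subseteq \R^d$: indeed $\dim\pi_{V_\xi}V < r$ is equivalent to the vanishing of all $r\times r$ minors of a matrix whose entries are affine-linear in $\nabla\bq(\xi)$, hence polynomial (of bounded degree $\leq 2$, well within degree $d$ if $d\geq 2$; for $d=1$ the degree-$\le d$ requirement can be met by a trivial adjustment or by noting $\dim\pi_{V_\xi}V$ is then automatically generic) in $\xi$, so $Z = Z_P$ for a suitable nonzero $P$ of degree $\leq d$. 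If $\xi_j \in W_j$ with $2W_j \cap Z_P = \emptyset$, then $W_j$ lies in the locus where the dimension is exactly $r$; since $\calW$ is $\theta$-uniform, the number of ``bad'' $W_j$ (those with $2W_j \cap Z_P \neq \emptyset$) is at most $\theta M$, so at least $(1-\theta)M$ of the $\xi_j$ satisfy $\dim\pi_{V_{\xi_j}}V = r$. This gives the claimed average bound, and taking the supremum over $V$ finishes the estimate $\kappa \leq \kappa_{\bq}(\alpha(1-\theta))$.

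Combining: the uniform Brascamp--Lieb bound from Theorem~\ref{thm:BL} (applied finitely many times over the Grassmannian cover of $W_1\times\cdots\times W_M$, absorbing the number of pieces into $C_{\theta,K,\alpha}$) gives
\[
\sup_{\xi_j\in W_j}\BL((V_{\xi_j})_{j=1}^M,\alpha,R,\R^{d+n}) \lesssim_{\theta,K,\alpha} R^{\kappa} \leq C_{\theta,K,\alpha} R^{\kappa_{\bq}(\alpha(1-\theta))},
\]
as desired. The main obstacle I anticipate is the careful bookkeeping in the first paragraph's reduction: Theorem~\ref{thm:BL} only provides a constant $C_0$ and a neighborhood depending on the \emph{reference tuple}, so one must argue by a compactness/finiteness argument that finitely many such neighborhoods (indexed by a choice of reference points, one per $W_j$, within the compact set $\prod_j \overline{W_j}$) cover everything, with the number of neighborhoods and the constant both controlled by $K$, $\alpha$, $\theta$, and $\bq$. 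A secondary, more minor point is verifying that the exceptional variety genuinely has degree $\leq d$ (handled by the degree bound above, with the $d=1$ case trivial), and that $Z_P$ being nonempty forces $P\not\equiv 0$ (immediate, since $\dim\pi_{V_\xi}V = r$ somewhere means the corresponding minor does not vanish identically).
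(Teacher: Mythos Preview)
Your proof is correct and follows essentially the same route as the paper: reduce via Theorem~\ref{thm:BL} and compactness to the combinatorial inequality, then use $\theta$-uniformity on a single non-vanishing $r\times r$ minor determinant to show at least $(1-\theta)M$ of the projections attain the maximal rank $r$. One small correction: the entries of the matrix \eqref{eq:dim-pi-V-as-rank} are affine-linear in $\xi$ (since $\partial_j Q_i$ is linear), so an $r\times r$ minor has degree $\leq r \leq d$, not degree $\leq 2$; this is precisely why Definition~\ref{def:uniform} uses degree $\leq d$, and no separate treatment of $d=1$ is needed.
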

In the remaining part, if $\bq$ is clear from the context, we often abbreviate $\kappa_{\bq}(\alpha)$ to $\kappa(\alpha)$.
\begin{proof}[Proof of Lemma~\ref{lem:BL-uniform}]
Since there are only finitely many $\theta$-uniform sets $\calW$, and, for any fixed $\theta$-uniform set $\calW = \Set{W_{1},\dotsc, W_{M}}$, the set $\prod_{j=1}^{M} \overline{W_j}$ is compact, by Theorem~\ref{thm:BL}, it suffices to show that, for any $\xi_{j}\in \overline{W_{j}}$, and every subspace $V \leq \R^{d+n}$, we have
\[
\dim V - \frac{\alpha}{M} \sum_{j=1}^{M} \dim \pi_{V_{\xi_{j}}} V
\leq
\dim V - \alpha(1-\theta) \sup_{\xi \in \R^{d}}\dim \pi_{V_{\xi}} V.
\]
This is equivalent to
\[
\frac{1}{M} \sum_{j=1}^{M} \dim \pi_{V_{\xi_{j}}} V
\geq
(1-\theta) \sup_{\xi \in \R^{d}}\dim \pi_{V_{\xi}} V.
\]
If $v_{1},\dotsc,v_{m}$ is a basis of $V$, then
\begin{equation}
\label{eq:dim-pi-V-as-rank}
\dim \pi_{V_{\xi}} V
=
\rank
\begin{pmatrix}
e_{1} & \partial_{1} \bfQ(\xi)\\
\vdots & \vdots\\
e_{d} & \partial_{d} \bfQ(\xi)\\
\end{pmatrix}
\cdot
\begin{pmatrix}
v_{1} & \dots & v_{m}
\end{pmatrix},
\end{equation}
where on the right hand side we have the product of two matrices.
Each minor determinant of this matrix is a polynomial of degree at most $d$.
Consider the largest minor (of size $d'\times d'$, say) whose determinant is a non-vanishing polynomial; call this polynomial $P$ (if $d'=0$, then $P=1$).
Then
\[
d' = \sup_{\xi \in \R^{d}}\dim \pi_{V_{\xi}} V.
\]
By Definition~\ref{def:uniform}, we have $P(\xi_{j})\neq 0$ for at least $(1-\theta)M$ many $j$'s.
Therefore,
\[
\frac{1}{M} \sum_{j=1}^{M} \dim \pi_{V_{\xi_{j}}} V
\geq
\frac{1}{M} \sum_{j : P(\xi_{j})\neq 0} \dim \pi_{V_{\xi_{j}}} V
\geq
\frac{1}{M} \sum_{j : P(\xi_{j})\neq 0} d'
\geq
(1-\theta) d'.
\qedhere
\]
This finishes the proof of the lemma.
\end{proof}

From the proof of Lemma~\ref{lem:BL-uniform}, we see that the $\sup$ in $\sup_{\xi\in \R^d}\dim \pi_{V_\xi} V$ is attained at almost every point, with respect to the $d$-dimensional Lebesgue measure.
Therefore, we introduce the following notation
\begin{equation}
\dim \pi V:=\sup_{\xi\in \R^d}\dim \pi_{V_\xi} V.
\end{equation}
Next, we will find a more explicit description of the exponent \eqref{eq:kappa(alpha)} in terms of the quadratic forms $\bq$.
The following result relates the terms in \eqref{eq:kappa(alpha)} to the quantities introduced in \eqref{eq:numvar}.

\begin{lemma}
\label{lem:dim-proj-vs-missing-variables}
Let $\bq$ be an $n$-tuple of quadratic forms in $d$ variables.
For a linear subspace $V \subseteq \R^{d+n}$, let
\[
d' := \dim \pi V,
\quad
n' := \dim V - \dim \pi V.
\]
Then
\[
n' \leq n
\text{ and }
\numvar_{n'}(\bq) \leq d'.
\]
\end{lemma}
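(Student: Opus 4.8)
The plan is to analyze the structure of a subspace $V \subseteq \R^{d+n}$ together with the tangent spaces $V_\xi$, decompose the ambient space into its $\R^d$ and $\R^n$ factors, and then read off the two claimed inequalities. Write $\pi_d : \R^{d+n}\to\R^d$ and $\pi_n:\R^{d+n}\to\R^n$ for the coordinate projections. First I would observe that $\pi$ (the projection onto $V_\xi$) is, for generic $\xi$, essentially the same as $\pi_d$ composed with the graphing identification: the matrix in \eqref{eq:dim-pi-V-as-rank} has its first block equal to $(e_1;\dots;e_d)$, so $\dim \pi_{V_\xi}V$ equals the rank of the $d\times m$ matrix obtained by applying $\pi_d$ to a basis of $V$, \emph{plus} possibly a correction coming from how the $\eta$-coordinates interact. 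More precisely, let $V_0 := V \cap (\{0\}^d \times \R^n)$ be the ``vertical part'' of $V$; then $\dim \pi_d V = \dim V - \dim V_0$. I would first argue that $d' = \dim\pi V = \dim \pi_d V$, i.e.\ that $\pi_{V_\xi}$ restricted to $V$ generically has the same rank as $\pi_d|_V$. This is because for $v \in V_0$, the vector $(0,\partial_j\bq(\xi))$-components contribute nothing to the projection (the first block of the matrix is the identity, so a vector with zero $\xi$-part maps to zero under the natural identification of $V_\xi$ with $\R^d$). Hence $n' = \dim V - d' = \dim V_0$, and in particular $n' \le \dim(\{0\}^d\times\R^n) = n$, giving the first assertion.

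Next, for the harder inequality $\numvar_{n'}(\bq)\le d'$, I would use the explicit description \eqref{201105e1_15} of $\numvar_{n'}(\bq)$ as an infimum over full-rank $M\in\R^{d\times d}$, over $M'\in\R^{n'\times n}$ of rank $n'$, of $\NV(M'\cdot(\bq\circ M))$. The idea: $V_0 = V\cap(\{0\}^d\times\R^n)$ is an $n'$-dimensional subspace of $\R^n$, so choose $M'\in\R^{n'\times n}$ of rank $n'$ whose rows span the \emph{annihilator}-type data picking out $V_0$ — more precisely, I want $M'$ so that the quadratic forms $M'\cdot\bq$ are exactly the ``components of $\bq$ transverse to $V_0$'', or dually so that the relevant vanishing happens. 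The key point to extract is: since $V$ contains $V_0$ and projects onto a $d'$-dimensional subspace $W := \pi_d V \subseteq \R^d$, one can pick coordinates (a change of variables $M$ on $\R^d$) so that $W = \R^{d'}\times\{0\}^{d-d'}$, and then the condition that $V$ is a subspace containing the $n'$ vertical directions $V_0$ forces the forms $M'\cdot(\bq\circ M)$ to depend only on the first $d'$ variables. I expect the cleanest way to see this is: the graph constraint — $V$ sits inside the span of tangent spaces in a compatible way — means that for a basis $v_i = (\xi^{(i)}, \eta^{(i)})$ of $V$ with the $\xi^{(i)}$ spanning $W$, the ``missing'' $\eta$-relations are precisely that $\nabla\bq$ evaluated on directions outside $W$, paired against $V_0$, must vanish, which after the change of variables says $\partial_{\xi_{d'+1}},\dots,\partial_{\xi_d}$ of $M'\cdot(\bq\circ M)$ vanish identically. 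That yields $\NV(M'\cdot(\bq\circ M))\le d'$, hence $\numvar_{n'}(\bq)\le d'$.

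The main obstacle I anticipate is making rigorous the claim that the vertical subspace $V_0$ together with $W = \pi_d V$ forces the transversal combination of forms $M'\cdot\bq$ to be independent of the complementary variables — i.e., correctly identifying which linear combination $M'$ of $Q_1,\dots,Q_n$ is governed by $V_0$ and carefully bookkeeping the rank conditions. The subtlety is that $V$ is an \emph{arbitrary} subspace of $\R^{d+n}$, not assumed to be spanned by tangent vectors, so I need to be careful about what ``$V$ interacts with the tangent spaces'' actually buys me; in fact I suspect the correct statement uses only $V_0 = V\cap(\{0\}^d\times\R^n)$ and $W = \pi_d V$ and a dimension count, with the quadratic structure of $\bq$ entering through the fact that $\partial_j\bq$ is \emph{linear} in $\xi$, so ``vanishing at a generic point'' upgrades to ``vanishing identically'' — this is exactly the mechanism behind the $\sup_\xi$ being attained almost everywhere, already noted after Lemma~\ref{lem:BL-uniform}. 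I would set up the argument so that this linearity is invoked explicitly to pass from a pointwise rank statement to the identity $\NV(\dots)\le d'$, and then conclude by \eqref{201105e1_15}.
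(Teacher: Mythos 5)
There is a genuine gap, and it occurs at the very first structural step. You claim $d'=\dim\pi_{d}V$ and hence $n'=\dim V_{0}$ with $V_{0}=V\cap(\{0\}^{d}\times\R^{n})$, on the grounds that a vector with zero $\xi$-part ``maps to zero'' under $\pi_{V_{\xi}}$. This is false: the pairing of $(0,\eta)$ with the tangent frame $(e_{j},\partial_{j}\bq(\xi))$ is $\partial_{j}\bq(\xi)\cdot\eta$, which is generically nonzero. Concretely, take $d=n=1$, $\bq=(\xi_{1}^{2})$, $V=\{0\}\times\R$: then $V_{\xi}=\lin\{(1,2\xi)\}$ and $\pi_{V_{\xi}}(0,1)\neq 0$ for $\xi\neq 0$, so $d'=1$, $n'=0$, whereas $\dim\pi_{d}V=0$ and $\dim V_{0}=1$. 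Your identification would reduce the lemma to $\numvar_{1}(\xi_{1}^{2})\leq 0$, which is false (the true claim here is the trivial $\numvar_{0}\leq 1$). Only the inequalities $d'\geq\dim\pi_{d}V$ and $n'\leq\dim V_{0}\leq n$ hold (evaluate \eqref{eq:dim-pi-V-as-rank} at $\xi=0$), so your argument does still yield the easy assertion $n'\leq n$, but the claimed mechanism for $\numvar_{n'}(\bq)\leq d'$ --- that containing the vertical directions $V_{0}$ forces the corresponding combinations $M'\cdot(\bq\circ M)$ to be independent of the variables complementary to $\pi_{d}V$ --- collapses with the same counterexample. Your suspicion that the statement ``uses only $V_{0}$ and $W=\pi_{d}V$ and a dimension count'' is exactly what fails: the quadratic forms attached to $V_{0}$ genuinely enter.

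What the paper does instead: writing $s=\dim\pi_{d}V$ and $j=\dim V_{0}$, it forms the matrix $B=(\partial_{s+i}Q_{k}(\xi))_{i,k}$ of derivatives of the $j$ forms selected by $V_{0}$ in the directions transverse to $\pi_{d}V$, and shows $d'\geq s+r$ where $r=\rank_{\mathbb F}B$ over the rational function field; hence $n'\leq j-r$, so the defect $r$ simultaneously inflates $d'$ above $\dim\pi_{d}V$ and deflates $n'$ below $\dim V_{0}$. The crucial remaining ingredient, absent from your proposal, is Lemma~\ref{lem:zero-block}: the rank deficiency of a matrix of \emph{linear} forms over $\mathbb F$ can be realized by \emph{real} invertible row and column operations, producing a $(j-r)\times(d-s-r)$ zero block; only real operations correspond to the changes of variables allowed in the definition \eqref{eq:numvar}, and this gives $n'\leq j-r$ forms depending on at most $s+r\leq d'$ variables. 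Your appeal to ``linearity of $\partial_{j}\bq$ upgrades generic vanishing to identical vanishing'' does not substitute for this: the issue is not pointwise-versus-identical vanishing but converting function-field rank information into constant-coefficient kernel vectors, which is precisely what the leading-monomial argument in the proof of Lemma~\ref{lem:zero-block} supplies.
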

Lemma~\ref{lem:dim-proj-vs-missing-variables} relies on the following algebraic result.
\begin{lemma}
\label{lem:zero-block}
Let $\mathbb{F} = \R(\xi_{1},\dotsc,\xi_{d})$ be the field of rational functions in $d$ variables.
Let $A = (\sum_{k}a_{i,j,k}\xi_{k})_{i,j}$ be a $(N_{1}\times N_{2})$-matrix whose entries are linear maps with real coefficients.
Suppose that $\rank_{\mathbb{F}} A = r$.
Then there exist real invertible matrices $B,B'$ such that
\begin{equation}
\label{eq:zero-block}
BAB' =
\begin{pmatrix}
* & *\\
* & 0
\end{pmatrix},
\end{equation}
where the zero block has size $(N_{1}-r)\times (N_{2}-r)$.
\end{lemma}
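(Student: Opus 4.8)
The plan is to prove Lemma~\ref{lem:zero-block} by induction on the number of variables $d$, using the structure of matrices over the polynomial ring and the fact that the rank over $\mathbb{F}$ is governed by the vanishing of minors. First I would handle the base case $d=0$ (constant matrices): here $A$ is a real matrix of rank $r$, and Gaussian elimination (i.e. multiplication by invertible real matrices $B,B'$) brings $A$ to a form with an $r\times r$ identity block in the top-left and zeros elsewhere, in particular the bottom-right $(N_1-r)\times(N_2-r)$ block vanishes. The real content is the inductive step.

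For the inductive step, write $A = A_{0} + \xi_{d} A_{1} + (\text{terms involving } \xi_{1},\dots,\xi_{d-1})$, or more invariantly regard $A$ as a matrix with entries in $\mathbb{F}' = \R(\xi_1,\dots,\xi_{d-1})$ whose entries are affine-linear in $\xi_d$. The key observation is that $B,B'$ must be \emph{real} (constant) matrices, so conjugating by them commutes with specialization of the variables. I would proceed as follows: let $r' = \rank_{\mathbb{F}'} A$ where we view $A$ over the smaller field; clearly $r' \le r$. Actually the cleaner route is to induct differently: pick a point $\xi^{*} \in \R^{d}$ (e.g. generic) at which $A(\xi^{*})$ attains the generic rank — but that generic rank is exactly $r$ since $\rank_\mathbb{F} A = r$ means some $r\times r$ minor is a nonzero polynomial, hence nonzero at a generic point, while all $(r+1)\times(r+1)$ minors vanish identically. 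So $\rank A(\xi^{*}) = r$ for generic $\xi^{*}$, and in fact for \emph{every} $\xi^{*}$ we have $\rank A(\xi^{*}) \le r$.

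So the strategy is: choose real invertible $B, B'$ so that $B A(\mathbf{0}) B'$ has its top-left $r\times r$ corner equal to $I_r$ and all other entries zero — this is possible since $\rank A(\mathbf{0}) \le r$, after possibly first using the genericity to arrange $\rank A(\mathbf{0}) = r$; if $\rank A(\mathbf 0) < r$ one picks another base point and translates, which is harmless since translation in the $\xi$ variables is a real affine change of the variables but \emph{not} of the matrix — so instead one should argue directly that there is \emph{some} $\xi^*$ with $\rank A(\xi^*)=r$ and WLOG (replacing $\xi$ by $\xi-\xi^*$, which only changes the $a_{i,j,k}$) take $\xi^*=\mathbf 0$. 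Now with $BAB' = \begin{pmatrix} I_r + L_{11} & L_{12} \\ L_{21} & L_{22}\end{pmatrix}$ where the $L$'s are matrices of linear forms vanishing at $\mathbf{0}$ (so genuinely linear, no constant term), the top-left block is invertible over $\mathbb{F}$, and a Schur-complement row/column reduction — again by \emph{real} matrices? no, the Schur complement uses the inverse of $I_r + L_{11}$ which is not polynomial. \textbf{This is the main obstacle}: the naive Schur complement destroys the "real coefficients" requirement on $B,B'$. The fix is to only clear the bottom-right block rather than diagonalize: argue that since $\rank_\mathbb{F} A = r$ and the top-left $r\times r$ block of $BAB'$ is invertible over $\mathbb{F}$, every entry of the bottom-right $(N_1-r)\times(N_2-r)$ block, being (up to sign) a ratio of an $(r+1)\times(r+1)$ minor to the top-left $r\times r$ minor and the former vanishing identically, must itself vanish identically as a rational function, hence as a polynomial. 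Thus $BAB'$ already has the form \eqref{eq:zero-block} with no further reduction needed — but one must double-check that the bottom-right entry of $BAB'$ equals such a minor ratio, which follows from the cofactor/Cramer formula applied to the $(r+1)\times(r+1)$ submatrix formed by the first $r$ rows and columns together with one extra row $i$ and column $j$. So the lemma follows: I expect the delicate point to be packaging this minor-ratio argument cleanly and making sure the choice of base point $\xi^*$ with $\rank A(\xi^*) = r$ can indeed be absorbed into a real redefinition of the coefficients $a_{i,j,k}$ (it can, since $\xi \mapsto \xi - \xi^*$ turns $\sum_k a_{i,j,k}\xi_k$ into $\sum_k a_{i,j,k}\xi_k - \sum_k a_{i,j,k}\xi^*_k$, i.e. it merely adds a constant term, which is not allowed by the hypothesis "linear maps" — so actually one should instead arrange the base point to be the origin by noting that if \emph{all} $A(\xi)$ have rank $<r$ we contradict $\rank_\mathbb{F} A = r$, hence the set $\{\xi : \rank A(\xi) = r\}$ is a nonempty Zariski-open set, and then observe the whole argument can be run at any such point $\xi^*$ without translating, simply taking $B,B'$ to diagonalize $A(\xi^*)$ and noting the minor-ratio argument is translation-free).
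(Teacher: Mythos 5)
Your overall route is genuinely different from the paper's, and the setup is sound: since $\rank_{\mathbb{F}}A=r$, some $r\times r$ minor is a nonzero polynomial, so there is a real point $\xi^{*}$ with $\rank A(\xi^{*})=r$, and you can choose real invertible $B,B'$ with $BA(\xi^{*})B'=\operatorname{diag}(I_{r},0)$. The gap is in the crucial final step. Writing $BAB'=\begin{pmatrix}P&Q\\R&S\end{pmatrix}$ with $P$ of size $r\times r$, the entry $S_{ij}$ is \emph{not} equal (up to sign) to the ratio of the $(r+1)\times(r+1)$ minor on rows $1,\dotsc,r,r+i$ and columns $1,\dotsc,r,r+j$ to $\det P$. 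The cofactor/Schur identity gives $\det\begin{pmatrix}P&Q_{\cdot j}\\R_{i\cdot}&S_{ij}\end{pmatrix}=\det(P)\,\bigl(S_{ij}-R_{i\cdot}P^{-1}Q_{\cdot j}\bigr)$, so the identical vanishing of all $(r+1)\times(r+1)$ minors only tells you that the Schur complement $S-RP^{-1}Q$ vanishes, i.e.\ $S=RP^{-1}Q$, which for a general rank-$r$ matrix with invertible top-left corner does not force $S\equiv 0$. As written, your argument therefore does not produce the zero block.

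The step can be repaired, but only by using the extra structure you have and did not invoke: the entries of $BAB'$ are homogeneous linear forms, and at $\xi^{*}$ one has $P(\xi^{*})=I_{r}$, $Q(\xi^{*})=R(\xi^{*})=S(\xi^{*})=0$. After a real linear change of the variables $\xi$ (harmless: it preserves linearity of the entries and the rank over $\mathbb{F}$) you may assume $\xi^{*}=e_{1}$; then $BAB'=\xi_{1}\operatorname{diag}(I_{r},0)+N$ with $N$ free of $\xi_{1}$, and expanding the above $(r+1)\times(r+1)$ minor in powers of $\xi_{1}$ shows that its $\xi_{1}^{\,r}$-coefficient is exactly $S_{ij}$; since the minor vanishes identically, $S_{ij}\equiv 0$. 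With this correction your argument becomes a complete and arguably shorter proof than the paper's, which instead runs an iterative elimination producing diagonal entries $\xi_{k_{1}}+*,\dotsc,\xi_{k_{r+1}}+*$ and derives a contradiction from the lexicographic leading term $\xi_{k_{1}}\dotsm\xi_{k_{r+1}}$ of an $(r+1)\times(r+1)$ minor; note that both proofs ultimately rest on exactly the kind of leading-coefficient argument that is missing from your minor-ratio claim.
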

Standard linear algebra shows that there exist invertible matrices $B,B'$ with entries in $\mathbb{F}$ such that \eqref{lem:zero-block} holds.
The point of this Lemma~\ref{lem:zero-block} is that we can find $B,B'$ with real entries.
Note that Lemma~\ref{lem:zero-block} may fail if entries of $A$ are not assumed to be linear forms. We will include a proof of Lemma~\ref{lem:zero-block} below and we also note that after finishing the first version of the paper, Zipei Nie \cite{arxiv:2111.05553} pointed out to us that Lemma~\ref{lem:zero-block} is in fact known in the literature and follows from \cite[Lemma 1]{MR136618}. We thank him for this comment.

\begin{proof}[Proof of Lemma~\ref{lem:dim-proj-vs-missing-variables} assuming Lemma~\ref{lem:zero-block}]
The claim $n'\leq n$ follows from the fact that the tangent spaces $V_{\xi}$ have codimension $n$.

After linear changes of variables in $\R^{d}$ and $\R^{n}$, we may assume that $V$ is spanned by linearly independent vectors of the form
\[
(e_{1},v_{1}),\dotsc,(e_{s},v_{s}),(0,\tilde{e}_{1}),\dotsc,(0,\tilde{e}_{j}),
\]
where $e_{i}$ are unit coordinate vectors in $\R^{d}$, $\tilde{e}_{i}$ are unit coordinate vectors in $\R^{n}$, $v_i$ are vectors in $\R^n$ and $s \leq \min(d,\dim V)$.
Note also that $\dim V=s+j$ and that $j\le n$.
As in \eqref{eq:dim-pi-V-as-rank}, we have
\[
\dim \pi_{V_\xi} V
=
\rank_{\R}
\begin{pmatrix}
e_{1} & \partial_{1} \bfQ(\xi)\\
\vdots & \vdots\\
e_{d} & \partial_{d} \bfQ(\xi)\\
\end{pmatrix}
\cdot
\begin{pmatrix}
e_{1}^{T} & \dotsc & e_{s}^{T} & 0 & \dotsc & 0\\
v_{1}^{T} & \dotsc & v_{s}^{T} & \tilde{e}_{1}^{T} & \dotsc & \tilde{e}_{j}^{T}\\
\end{pmatrix}.
\]
Since all entries of the product matrix on the right-hand side are polynomials in $\xi$, we have
\[
d'
=
\sup_{\xi} \dim \pi_{V_\xi} V
=
\rank_{\mathbb{F}}
\begin{pmatrix}
e_{1} & \partial_{1} \bfQ(\xi)\\
\vdots & \vdots\\
e_{d} & \partial_{d} \bfQ(\xi)\\
\end{pmatrix}
\cdot
\begin{pmatrix}
e_{1}^{T} & \dotsc & e_{s}^{T} & 0 & \dotsc & 0\\
v_{1}^{T} & \dotsc & v_{s}^{T} & \tilde{e}_{1}^{T} & \dotsc & \tilde{e}_{j}^{T}\\
\end{pmatrix},
\]
where $\mathbb{F}$ is the field of rational functions in $d$ variables.
This is because the rank equals the size of the largest minor with non-vanishing determinant, and the determinant of any minor, viewed as an element of $\mathbb{F}$, vanishes if and only if its value vanishes for every $\xi$.
The latter matrix can be written in the block form
\begin{equation}
\label{eq:6}
\begin{pmatrix}
I + L_{1} & L_{3}\\
L_{2} & B
\end{pmatrix},
\end{equation}
where $I$ is the $s\times s$ identity matrix, $L_{1},L_{2},L_{3}$ are matrices whose entries are linear combinations of monomials of degree $1$, and
\[
B
=
\begin{pmatrix}
\partial_{s+1} Q_{1}(\xi) & \dotsc & \partial_{s+1} Q_{j}(\xi)\\
\vdots & & \vdots \\
\partial_{d} Q_{1}(\xi) & \dotsc & \partial_{d} Q_{j}(\xi)
\end{pmatrix}.
\]
Let $r:=\rank_{\mathbb{F}} B$.
Any $r\times r$-minor determinant $P$ of the matrix $B$ is a homogeneous polynomial of degree $r$, and $P$ coincides with the lowest degree homogeneous part of the corresponding $(r+s)\times(r+s)$-minor determinant of \eqref{eq:6}, obtained by adjoining the first $s$ rows and columns. Therefore,
\[
d' \geq s + r.
\]

Let us continue to prove $\numvar_{n'}(\bq)\le d'$.
Recall that $n' = s+j-d' \leq j-r$.
By the definition in \eqref{eq:numvar}, it suffices to find linear changes of variables in $\R^{d}$ and $\R^{n}$, after which $Q_{r+1},\dotsc,Q_{j}$ no longer depend on variables $\xi_{s+r+1},\dotsc,\xi_{d}$.
Notice that row and column operations on $B$ with coefficients in $\R$ correspond to linear changes of variables in $\R^{d}$ and $\R^{n}$, respectively.
By Lemma~\ref{lem:zero-block}, by row and column operations with coefficients in $\R$, $B$ can be brought in a form in which it has a $(j-r) \times (d-s-r)$-block of zeroes.
This means that, after a change of variables, $Q_{r+1},\dotsc,Q_{j}$ do not depend on variables $\xi_{s+r+1},\dotsc,\xi_{d}$.
\end{proof}

\begin{proof}[Proof of Lemma~\ref{lem:zero-block}]
Let $k_{1}$ be the largest index such that $\xi_{k_{1}}$ appears in $A$.
Swapping rows and columns, we may assume $a_{1,1,k_{1}}\neq 0$.
Using elementary row and column operations, we may further assume that $a_{1,1,k_{1}}=1$, $a_{1,j,k_{1}}=0$, and $a_{i,1,k_{1}}=0$ for all $j\neq 0$ and $i\neq 0$.
Thus, we may assume
\[
A =
\begin{pmatrix}
\xi_{k_{1}} + * & *\\
* & A'
\end{pmatrix},
\]
where $\xi_{k_{1}}$ does not appear in entries $*$ and $A'$ is an $(N_{1}-1)\times (N_{2}-1)$-matrix.
If $A' \neq 0$, we repeat the same procedure in $A'$, and so on.
If this process stops after at most $r$ iterations, then we are done.
Otherwise, we have brought the upper left corner of $A$ into the form
\begin{equation}
\label{eq:3}
\begin{pmatrix}
\xi_{k_{1}} + * & * & \dots & *\\
* & \xi_{k_{2}} + * & \dots & *\\
\vdots & & \ddots & \vdots\\
* & \dots & * & \xi_{k_{r+1}} + *
\end{pmatrix},
\end{equation}
where $a_{i,j,k}=0$ if $i\neq j$ and $k \geq k_{\min(i,j)}$.
The determinant of this matrix is a polynomial whose leading term in the lexicographic ordering is $\xi_{k_{1}}\dotsm \xi_{k_{r+1}}$:
\[
\det \eqref{eq:3}
=
\xi_{k_{1}}\dotsm \xi_{k_{r+1}} + \text{lower order terms.}
\]
This can be seen by induction on the size of this matrix.
Indeed, if $k_{1}=\dotsb=k_{l} > k_{l+1}$, then $\xi_{k_{1}}$ appears in this matrix only in the first $l$ diagonal entries, so
\[
\det \eqref{eq:3}
=
\xi_{k_{1}}^{l}
\cdot \det
\begin{pmatrix}
\xi_{k_{l+1}} + * & * & \dots & *\\
* & \xi_{k_{l+2}} + * & \dots & *\\
\vdots & & \ddots & \vdots\\
* & \dots & * & \xi_{k_{r+1}} + *
\end{pmatrix}
+\text{lower order terms.}
\]
In particular, the matrix \eqref{eq:3} is invertible (over $\mathbb{F}$), so that $\rank_{\mathbb{F}}A \geq r+1$, a contradiction.
\end{proof}

\begin{corollary}
\label{cor:kappa-bound}
For any $\alpha\geq 1$, the exponent defined in \eqref{eq:kappa(alpha)} satisfies
\begin{equation}
\label{eq:kappa-bound}
\kappa(\alpha)
\leq
\sup_{0 \leq n' \leq n} \Big(n' + (1-\alpha) \numvar_{n'}(\bq)\Big).
\end{equation}
\end{corollary}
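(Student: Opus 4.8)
The plan is to unwind the definition \eqref{eq:kappa(alpha)} of $\kappa(\alpha)$ and bound the contribution of each individual subspace $V \leq \R^{d+n}$ by invoking Lemma~\ref{lem:dim-proj-vs-missing-variables}. Recall that, with the notation introduced just after the proof of Lemma~\ref{lem:BL-uniform},
\[
\kappa(\alpha) = \sup_{V \leq \R^{d+n}} \bigl( \dim V - \alpha \dim \pi V \bigr).
\]
So I would fix an arbitrary $V \leq \R^{d+n}$ and, exactly as in the statement of Lemma~\ref{lem:dim-proj-vs-missing-variables}, set $d' := \dim \pi V$ and $n' := \dim V - \dim \pi V$, so that $\dim V - \alpha \dim \pi V = n' + (1-\alpha) d'$.

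By Lemma~\ref{lem:dim-proj-vs-missing-variables} we have $0 \leq n' \leq n$ and $\numvar_{n'}(\bq) \leq d'$. The key observation is a monotonicity argument that uses the hypothesis $\alpha \geq 1$: the factor $1 - \alpha$ is non-positive, and multiplying the inequality $d' \geq \numvar_{n'}(\bq) \geq 0$ by this non-positive number reverses it, giving $(1-\alpha) d' \leq (1-\alpha)\numvar_{n'}(\bq)$. Hence
\[
\dim V - \alpha \dim \pi V = n' + (1-\alpha) d' \leq n' + (1-\alpha)\numvar_{n'}(\bq) \leq \sup_{0 \leq n'' \leq n} \bigl( n'' + (1-\alpha)\numvar_{n''}(\bq) \bigr).
\]
Taking the supremum over all subspaces $V \leq \R^{d+n}$ on the left-hand side then yields \eqref{eq:kappa-bound}.

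There is essentially no obstacle in this last step: all the substance has already been isolated in Lemma~\ref{lem:dim-proj-vs-missing-variables} (and, beneath it, in the purely algebraic Lemma~\ref{lem:zero-block} allowing real — rather than merely rational-function — row and column operations), which is what translates the dimension count $\dim \pi V$ into the combinatorial invariant $\numvar_{n'}(\bq)$. Once that lemma is available, the present corollary is a one-line calculation, and the only point requiring any care is bookkeeping the signs, i.e.\ remembering that $d'$ and $\numvar_{n'}(\bq)$ are non-negative and that $1-\alpha \leq 0$, so that the inequality $d' \geq \numvar_{n'}(\bq)$ flips in the desired direction.
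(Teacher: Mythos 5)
Your proposal is correct and coincides with the paper's own proof: both fix a subspace $V$, write $\dim V - \alpha \dim \pi V = n' + (1-\alpha)d'$, and use $\numvar_{n'}(\bq) \leq d'$ from Lemma~\ref{lem:dim-proj-vs-missing-variables} together with $1-\alpha \leq 0$ before taking the supremum over $V$. Nothing is missing.
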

\begin{proof}[Proof of Corollary~\ref{cor:kappa-bound}]
Let $V \subseteq \R^{d+n}$ be a linear subspace.
With the notation from Lemma~\ref{lem:dim-proj-vs-missing-variables}, we obtain
\[
\dim V - \alpha \dim \pi V
=
(d' + n') - \alpha d'
=
n' + (1-\alpha) d'
\leq
n' + (1-\alpha) \numvar_{n'}(\bq).
\]
The conclusion follows after taking the supremum over all subspaces $V$.
\end{proof}

\subsection{Ball inflation}
A so-called ball inflation inequality, based on scale invariant Kakeya--Brascamp--Lieb inequalities, was first introduced in \cite[Theorem 6.6]{MR3548534}.
Here, we formulate a version of this inequality based on scale-dependent Kakeya-Brascamp-Lieb inequalities.
Recall that $\calU_{J}$ was defined in \eqref{eq:Uncertainty-box}.

\begin{proposition}[Ball inflation]\label{prop:ball-inflation}
Let $K \in 2^{\N}$ be a dyadic integer and $0 < \rho \leq 1/K$.
Let $\Set{W_j}_{j=1}^{M} \subset \Part{1/K}$ be a $\theta$-uniform set of cubes.
Then, for any $1 \leq t \leq p < \infty$, any functions $f_{J}$ with $\supp \widehat{f_{J}} \subset \calU_{J}$ and any $x_{0} \in \R^{d+n}$, we have
\begin{equation}
\label{eq:ball-inflation}
\begin{split}
& \norm[\Big]{\avprod_{j=1}^{M} \Big(\sum_{J \in \Part[W_j]{\rho}}\norm{f_{J}}_{\avL^{t}(w_{B(x,1/\rho)})}^t\Big)^{1/t}}_{\avL^{p}_{x \in B(x_{0},\rho^{-2})}}\\
& \le C_{\theta,K,p,t} \rho^{-(\frac{d}{t} - \frac{d+n}{p} + \frac{\kappa((1-\theta)p/t)}{p})} \avprod_{j=1}^{M} \Big(\sum_{J \in \Part[W_j]{\rho}}\norm{f_{J}}_{\avL^{t}(w_{B(x_{0},\rho^{-2})})}^t \Big)^{1/t}.
\end{split}
\end{equation}
\end{proposition}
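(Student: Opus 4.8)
The plan is to reduce \eqref{eq:ball-inflation} to the Kakeya--Brascamp--Lieb inequality of Theorem~\ref{thm:KBL}, using Lemma~\ref{lem:BL-uniform} to control the relevant Brascamp--Lieb constant. First I would set up the standard localization: by the locally constant property, for each cube $J \in \Part[W_j]{\rho}$ the function $x \mapsto \norm{f_{J}}_{\avL^{t}(w_{B(x,1/\rho)})}^{t}$ is, up to rapidly decaying tails, essentially constant on balls of radius $1/\rho$, and its Fourier support (as a function of $x$) is essentially contained in the projection to the first $d$ coordinates of a $\rho$-neighborhood of the tangent space $V_{\xi_J}$, where $\xi_J$ is the center of $J$. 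Rescaling by $\rho$ (so balls of radius $1/\rho$ become balls of radius $1$, and $B$ of radius $\rho^{-2}$ becomes a ball of radius $\rho^{-1} =: R$), these functions become genuine (up to Schwartz tails) functions constant at scale $1$, of the form $F_{j,J} \circ \pi_{V_{\xi_J}}$ composed with the rescaling. This is the step where one must be slightly careful, because $\avL^{p}$ is an $L^{p}$ norm raised to a power and the weights $w_{B}$, $w_{B(x,1/\rho)}$ must be matched; the usual fix is to dominate $w_{B(x,1/\rho)}$ by an average of translates and absorb everything into an enlarged weight $w_{B}$, at the cost of harmless constants.

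Next, raise both sides of the desired inequality to the power $p$ and expand the geometric mean $\avprod_{j=1}^{M}$; the left side becomes
\[
\frac{1}{|B|}\int_{B} \avprod_{j=1}^{M}\Bigl(\sum_{J \in \Part[W_j]{\rho}} \norm{f_{J}}_{\avL^{t}(w_{B(x,1/\rho)})}^{t}\Bigr)^{p/t}\dif x,
\]
which after rescaling is exactly of the form of the left-hand side of \eqref{eq:KBL} with exponent $\alpha = p/t \ge 1$ (note $1 \le t \le p$ so $\alpha \ge 1$), the subspaces being the (rescaled) tangent spaces $V_{\xi_J}$, $\xi_J \in W_j$, and the inner functions being the rescaled $\norm{f_J}_{\avL^t(\dots)}^t$. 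Here the ambient space is $\R^{d+n}$ but the subspaces $V_{\xi}$ are $d$-dimensional; to fit the framework of Theorem~\ref{thm:KBL} verbatim one may instead work with the projections $\pi_{1}(V_\xi) \subseteq \R^d$ of the tangent spaces to the frequency space, or simply invoke the version of Kakeya--Brascamp--Lieb on $\R^{d+n}$ directly as stated. To apply Theorem~\ref{thm:KBL} I need the uniform bound \eqref{eq:unif-bd-BL}: since $\Set{W_j}$ is $\theta$-uniform and $\xi_j \in W_j$, Lemma~\ref{lem:BL-uniform} (with $R$ in place of the generic scale, $\alpha = p/t$) gives
\[
\sup_{\xi_j \in W_j} \BL\bigl((V_{\xi_j})_{j=1}^{M}, p/t, R, \R^{d+n}\bigr) \le C_{\theta,K,p/t}\, R^{\kappa_{\bq}((1-\theta)p/t)}.
\]
Plugging $A = C_{\theta,K,p/t} R^{\kappa((1-\theta)p/t)}$ into \eqref{eq:KBL} produces, on the right, $C^{p/t} A$ times $\avprod_{j}\bigl(\sum_{J}\int_{V_{\xi_J}} (\text{rescaled } \norm{f_J}^t_{\avL^t})\bigr)^{p/t}$.

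Finally I would unwind the rescaling and the $\avL$ normalizations. The scaling $x \mapsto \rho x$ contributes a Jacobian factor $\rho^{-d}$ per integral over a $d$-dimensional subspace $V_{\xi_J}$ and a factor $\rho^{d+n}$ from $|B|$; combined with the $R^{\kappa} = \rho^{-\kappa}$ from the Brascamp--Lieb constant and after taking $p$-th roots, these assemble into exactly the claimed power $\rho^{-(\frac{d}{t} - \frac{d+n}{p} + \frac{\kappa((1-\theta)p/t)}{p})}$; tracking these exponents is the routine bookkeeping at the end. One checks that $\int_{V_{\xi_J}} \norm{f_J}_{\avL^t(w_{B(x,1/\rho)})}^t \dif x$ (in the unrescaled variables) is comparable to $\norm{f_J}_{\avL^t(w_B)}^t$ up to the weight-adjustment constants, which recovers the right-hand side of \eqref{eq:ball-inflation} after reassembling the geometric mean. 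The main obstacle, as usual for ball-inflation arguments, is the first step: making precise the claim that $\norm{f_J}_{\avL^t(w_{B(\cdot,1/\rho)})}^t$ is locally constant with the correct frequency support and passing between the local weights $w_{B(x,1/\rho)}$ and the global weight $w_B$ without losing anything essential; everything after that is a direct application of Theorem~\ref{thm:KBL} and Lemma~\ref{lem:BL-uniform} together with scaling bookkeeping.
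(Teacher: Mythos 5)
Your proposal follows essentially the same route as the paper's proof: raise the inequality to the $p$-th power, use the locally constant property to replace each $\norm{f_J}_{\avL^t(w_{B(x,1/\rho)})}^t$ by a function of the form $\tilde F_J\circ\pi_{V_{\xi_J}}$ that becomes constant at scale $1$ after rescaling by $\rho$, then apply Theorem~\ref{thm:KBL} on $\R^{d+n}$ with $\alpha=p/t$ and the uniform bound from Lemma~\ref{lem:BL-uniform}, and finish with the scaling bookkeeping. The one step you leave schematic --- constancy of the local averages along $V_{\xi_J}^{\perp}$ at the scale $\rho^{-2}$ of the whole ball $B$, which comes from the $\rho^{2}$-thickness of $\calU_J$ in the normal directions --- is exactly what the paper makes precise with the tiles $T_J$ (short sides $\rho^{-1}$, long sides $10\rho^{-2}$ along $V_{\xi_J}^{\perp}$), the dominating functions $F_J(x)=\sup_{y\in T_J(x)}\norm{f_J}_{\avL^t(w_{B(y,1/\rho)})}$, and the uncertainty-principle estimate $\norm{F_J}_{\avL^t(2B)}\lesssim\norm{f_J}_{\avL^t(w_B)}$.
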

\begin{proof}[Proof of Proposition~\ref{prop:ball-inflation}]
Let $R:=\rho^{-1}$.
Without loss of generality, we set $x_{0}=0$.
Let $\Omega := B(0,R^{2})$.
Let $\alpha := p/t$.
The $p$-th power of the left-hand side of \eqref{eq:ball-inflation} equals
\begin{equation}
\label{eq:ball-inflation-lhs-power-p}
\fint_{x\in \Omega} \avprod_{j=1}^M \Bigl( \sum_{J \in \Part[W_j]{\rho}} \norm{ f_{J} }_{\avL^{t}(w_{B(x,1/\rho)})}^t \Bigr)^{\alpha},
\end{equation}
where $\fint_{\Omega} :=|\Omega|^{-1}\int_{\Omega}$ denotes the average integral.
For each cube $J \in \Part[W_j]{\rho}$ with center $\xi_{J}$, we cover $\Omega$ with a family $\calT_{J}$ of disjoint tiles $T_{J}$, which are rectangular boxes with $n$ long sides of length $2\rho^{-2}$ centered at $0$ pointing in the directions $V_{\xi_{J}}^{\perp}$ and $d$ short sides of length $\rho^{-1}$ pointing in complementary directions (the length of the long sides equals the diameter of $B(x_{0},\rho^{-2})$, so that we only need one layer of tiles in the directions $V_{\xi_{J}}^{\perp}$).
We can choose these tiles so that they are contained in $C_{0}\Omega$ with $C_{0}\lesssim 1$.
We let $T_{J}(x)$ be the tile containing $x$, and for $x\in \cup_{ T_{J}\in\calT_{J}} T_{J}$ we define
\[
F_{J}(x) := \sup_{y \in T_{J}(x)} \norm{ f_{J} }_{\avL^{t}(w_{B(y, 1/\rho)})}.
\]
Then
\[
\fint_{x\in \Omega} \avprod_{j=1}^M \Bigl( \sum_{J \in \Part[W_j]{\rho}} \norm{ f_{J} }_{\avL^{t}(w_{B(x,1/\rho)})}^t \Bigr)^{\alpha}
\leq
\fint_{\Omega} \avprod_{j=1}^M \bigl( \sum_{J \in \Part[W_j]{\rho}} |F_{J}|^t \bigr)^{\alpha}.
\]
Since the function $F_{J}$ is constant on each tile $T_{J}\in\calT_{J}$, we can write its restriction to $\Omega$ in the form $\tilde{F}_{J} \circ \pi_{J}$, where $\pi_{J}$ is the orthogonal projection onto $V_{\xi_{J}}$.
To apply Theorem~\ref{thm:KBL}, we apply the change of variables $x\to R x$ such that the resulting functions $F_{J}(Rx)$ are constant at the unit scale:
\begin{equation}
\fint_{\Omega} \avprod_{j=1}^M \bigl( \sum_{J \in \Part[W_j]{\rho}} |F_{J}|^t \bigr)^{\alpha}=
R^{-(d+n)} \int_{B(0,R)} \avprod_{j=1}^M \bigl( \sum_{J \in \Part[W_j]{\rho}} F_{J}(R\cdot)^t \bigr)^{\alpha}.
\end{equation}
By Theorem~\ref{thm:KBL} and Lemma~\ref{lem:BL-uniform} with $R=\rho^{-1}$, we bound the last expression by
\begin{equation}
\begin{split}
& R^{-(d+n)} C_{\theta,K,\alpha} R^{\kappa(\alpha(1-\theta))}
\avprod_{j=1}^M \Bigl( \sum_{J \in \Part[W_j]{\rho}} \int_{B(0,C_{0}R) \subset \R^{d}} (\tilde{F}_{J}(R\cdot))^t \Bigr)^{\alpha}\\
& \lesssim R^{-(d+n)} C_{\theta,K,\alpha} R^{\kappa(\alpha(1-\theta))} R^{\alpha d}
\avprod_{j=1}^M \Bigl( \sum_{J \in \Part[W_j]{\rho}} \fint_{B(0,C_{0}R^{2}) \subset \R^{d+n}} |F_{J}|^t \Bigr)^{\alpha}.
\end{split}
\end{equation}
The conclusion will now follow from the bound
\begin{equation}
\label{FJbound}
\norm{ F_{J} }_{\avL^{t}(C_{0}\Omega)}
\lesssim
\norm{ f_{J} }_{\avL^{t}(w_{\Omega})},
\end{equation}
which is a standard application of the uncertainty principle, see e.g.\ \cite[(3.13)]{MR4031117}.
\end{proof}

The ball inflation inequality in Proposition~\ref{prop:ball-inflation} is sufficient for proving $\ell^{p}L^{p}$ decoupling.
For the proof of $\ell^{q}L^{p}$ decoupling with $q<p$ we need a slightly more general statement.
\begin{corollary}[Ball inflation, $\ell^{q}L^{t}$ version]\label{cor:ball-inflation:lqLt}
In the situation of Proposition~\ref{prop:ball-inflation}, for any $1 \leq \tilde{q} \leq t$, we have
\begin{equation}
\label{eq:ball-inflation:lqLt}
\begin{split}
& \norm[\Big]{ \avprod_{j=1}^{M} \Big(\sum_{J \in \Part[W_j]{\rho}} \norm{f_{J}}_{\avL^{t}(w_{B(x,1/\rho)})}^{\tilde{q}}\Big)^{1/\tilde{q}}}_{\avL^{p}_{x \in B(x_{0},\rho^{-2})}}\\
& \leq C_{\theta,K,p,t, \tilde{q}} (\abs{\log\rho}+2)^{K^{d}} \rho^{-(\frac{d}{t} - \frac{d+n}{p} + \frac{\kappa((1-\theta)p/t)}{p})}
\avprod_{j=1}^{M} \Big( \sum_{J \in \Part[W_j]{\rho}}\norm{f_{J}}_{\avL^{t}(w_{B(x_{0},\rho^{-2})})}^{\tilde{q}}\Big)^{1/\tilde{q}}.
\end{split}
\end{equation}
\end{corollary}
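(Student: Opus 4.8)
The plan is to deduce the $\ell^{\tilde q}$ statement from the $\ell^{t}$ statement of Proposition~\ref{prop:ball-inflation} by dyadic pigeonholing in the sizes $\|f_{J}\|_{\avL^{t}(w_{B})}$, combined with H\"older's inequality in $J$. Write
\[
g_{J}(x):=\|f_{J}\|_{\avL^{t}(w_{B(x,1/\rho)})},\qquad h_{J}:=\|f_{J}\|_{\avL^{t}(w_{B})},
\]
so that both sides of \eqref{eq:ball-inflation:lqLt} are assembled from these quantities. Since $B$ has radius $\rho^{-2}$ while every ball $B(x,1/\rho)$ with $x\in B$ has radius $\rho^{-1}$, the uncertainty-principle comparison of the two weights (exactly as in the proof of \eqref{FJbound}) yields the pointwise bound $g_{J}(x)\lesssim_{E}\rho^{-(d+n)/t}h_{J}$ for all $x\in B$. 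The mechanism behind the proof is this: after one restricts the cubes $J$ contributing to a given factor to a single dyadic level of $h_{J}$, the loss $N^{1/\tilde q-1/t}$ incurred by H\"older's inequality on the left-hand side is cancelled exactly by the conversion from the $\ell^{t}$ norm to the $\ell^{\tilde q}$ norm on the right-hand side.

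I would first carry out a truncation. Assume $\Lambda_{j}:=\max_{J\in\Part[W_{j}]{\rho}}h_{J}>0$ for every $j$ (otherwise $f_{J}=0$ for all $J\in\Part[W_{j}]{\rho}$ and the left-hand side vanishes). Discard from each $\Part[W_{j}]{\rho}$ the cubes with $h_{J}<\rho^{C_{0}}\Lambda_{j}$, where $C_{0}=C_{0}(d,n,p,t,\tilde q)$ is a large constant. Splitting kept and discarded cubes in each factor by the triangle inequality in $\ell^{\tilde q}$, using the elementary bound $\avprod_{j=1}^{M}(A_{j}+B_{j})\le\sum_{S\subseteq\{1,\dots,M\}}\bigl(\prod_{j\in S}A_{j}\prod_{j\notin S}B_{j}\bigr)^{1/M}$, and estimating every term with at least one discarded factor by means of $g_{J}(x)\lesssim\rho^{-(d+n)/t}h_{J}$, $|\Part[W_{j}]{\rho}|\le(K\rho)^{-d}$, and $\Lambda_{j}\le\bigl(\sum_{J}h_{J}^{\tilde q}\bigr)^{1/\tilde q}$, one checks that, provided $C_{0}$ is chosen large enough, the discarded cubes contribute at most a constant multiple of the right-hand side of \eqref{eq:ball-inflation:lqLt}. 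Hence it suffices to prove \eqref{eq:ball-inflation:lqLt} with each $\Part[W_{j}]{\rho}$ replaced by its kept sub-collection, which we continue to denote $\Part[W_{j}]{\rho}$; now every nonzero $h_{J}$ lies in $[\rho^{C_{0}}\Lambda_{j},\Lambda_{j}]$, so for each $j$ the collection splits as $\Part[W_{j}]{\rho}=\bigsqcup_{l=1}^{L}\mathcal{J}_{j,l}$ with $L\lesssim|\log\rho|$, where $h_{J}\sim\mu_{j,l}$ on $\mathcal{J}_{j,l}$ and $N_{j,l}:=|\mathcal{J}_{j,l}|$.

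Next I would split the left-hand side along these levels. For fixed $x$, the triangle inequality in $\ell^{\tilde q}$ followed by H\"older's inequality in $J$ (valid since $\tilde q\le t$) gives
\[
\Bigl(\sum_{J\in\Part[W_{j}]{\rho}}g_{J}(x)^{\tilde q}\Bigr)^{1/\tilde q}\le\sum_{l=1}^{L}N_{j,l}^{1/\tilde q-1/t}\Bigl(\sum_{J\in\mathcal{J}_{j,l}}g_{J}(x)^{t}\Bigr)^{1/t}.
\]
Taking the geometric mean over $j$, expanding the product of the $M$ sums over $l$ into a single sum over $\vec{l}\in\{1,\dots,L\}^{M}$ (again bounding an $M$-th root of a sum of nonnegative terms by the sum of $M$-th roots), then applying the $\avL^{p}_{x\in B}$ norm and the triangle inequality over the $\le L^{M}$ choices of $\vec{l}$, and finally invoking Proposition~\ref{prop:ball-inflation} for each fixed $\vec{l}$ with $f_{J}$ kept for $J\in\mathcal{J}_{j,l_{j}}$ and replaced by $0$ otherwise (these are still Fourier supported in $\mathcal{U}_{J}$, and $\{W_{j}\}_{j=1}^{M}$ is the same $\theta$-uniform collection), I obtain
\[
\mathrm{LHS}\;\lesssim_{\theta,K,p,t}\;\rho^{-\gamma}\sum_{\vec{l}\in\{1,\dots,L\}^{M}}\Bigl(\prod_{j=1}^{M}N_{j,l_{j}}^{1/\tilde q-1/t}\Bigr)^{1/M}\avprod_{j=1}^{M}\Bigl(\sum_{J\in\mathcal{J}_{j,l_{j}}}h_{J}^{t}\Bigr)^{1/t},\qquad\gamma:=\frac{d}{t}-\frac{d+n}{p}+\frac{\kappa((1-\theta)p/t)}{p}.
\]
Since $h_{J}\sim\mu_{j,l}$ on $\mathcal{J}_{j,l}$, one has $\bigl(\sum_{J\in\mathcal{J}_{j,l}}h_{J}^{t}\bigr)^{1/t}\sim\mu_{j,l}N_{j,l}^{1/t}$, so each $\vec{l}$-summand above is
\[
\sim\avprod_{j=1}^{M}\bigl(\mu_{j,l_{j}}N_{j,l_{j}}^{1/\tilde q}\bigr)\sim\avprod_{j=1}^{M}\Bigl(\sum_{J\in\mathcal{J}_{j,l_{j}}}h_{J}^{\tilde q}\Bigr)^{1/\tilde q}\le\avprod_{j=1}^{M}\Bigl(\sum_{J\in\Part[W_{j}]{\rho}}h_{J}^{\tilde q}\Bigr)^{1/\tilde q},
\]
the last inequality because $\mathcal{J}_{j,l_{j}}\subseteq\Part[W_{j}]{\rho}$ and $h_{J}\ge0$. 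As the number of $\vec{l}$'s is at most $L^{M}\lesssim_{K}|\log\rho|^{K^{d}}$ (recall $M\le K^{d}$, the number of dyadic cubes of side $1/K$ in $[0,1]^{d}$), this yields \eqref{eq:ball-inflation:lqLt}.

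I expect the main obstacle to be the bookkeeping forced by the geometric mean $\avprod_{j=1}^{M}$, which is superadditive rather than subadditive: one cannot simply ``add back'' the truncated cubes, or the non-dominant dyadic levels, factor by factor, and so one is forced to expand products of sums into $\le L^{M}$ terms, which is exactly what produces the polylogarithmic factor $|\log\rho|^{K^{d}}$. The second delicate point is the precise cancellation of the H\"older loss $N_{j,l}^{1/\tilde q-1/t}$ on the left against the passage $\mu_{j,l}N_{j,l}^{1/t}\mapsto\mu_{j,l}N_{j,l}^{1/\tilde q}$ on the right; this works only because each $\mathcal{J}_{j,l}$ is a single dyadic level of the $h_{J}$'s, so that the $\ell^{t}$ and $\ell^{\tilde q}$ norms over $\mathcal{J}_{j,l}$ differ by exactly $N_{j,l}^{1/\tilde q-1/t}$, with no residual loss.
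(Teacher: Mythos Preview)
Your argument is correct and follows essentially the same route as the paper's proof: dyadic pigeonholing in the sizes $h_{J}=\|f_{J}\|_{\avL^{t}(w_{B})}$, H\"older's inequality in $J$ to pass from $\ell^{\tilde q}$ to $\ell^{t}$ at the cost of $N_{j,l}^{1/\tilde q-1/t}$, an application of Proposition~\ref{prop:ball-inflation} on each level-tuple, and the reverse conversion on the right using that the $h_{J}$ within a level are comparable. The paper organizes this slightly differently---it keeps the ``tail'' level $\iota=\infty$ inside the sum and disposes of it via \eqref{201031e4_25} rather than truncating upfront---but the mechanism and the resulting $|\log\rho|^{K^{d}}$ loss (from the $\le L^{M}$ level-tuples with $M\le K^{d}$) are the same.
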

\begin{proof}[Proof of Corollary~\ref{cor:ball-inflation:lqLt}]
This follows from Proposition~\ref{prop:ball-inflation} by a dyadic pigeonholing argument in the proof of \cite[Theorem 6.6]{MR3548534}.
For the sake of completeness, we still include the proof here.
We follow the presentation in \cite[Appendix A]{MR4031117}.

For each $1\le j\le M$, partition
\begin{equation}
\calP(W_j, \rho)=\calJ_{j, \infty} \cup \bigcup_{\iota=0}^{\floor{d\log_{2}(1/\rho)}}\calJ_{j, \iota},
\end{equation}
where for $0\le \iota\le \log(1/\rho)$
\begin{equation}
\begin{split}
\calJ_{j, \iota}
&:=\Set[\Big]{J\in \calP(W_j, \rho): 2^{-\iota-1}<\frac{\norm{f_J}_{\avL^{t}(w_B)}}{\max_{J'\in \calP(W_j, \rho)}\norm{f_{J'}}_{\avL^{t}(w_B)}}\le 2^{-\iota}},
\\
\calJ_{j, \infty}
&:=\Set[\Big]{J\in \calP(W_j, \rho): {\norm{f_J}_{\avL^{t}(w_B)}}\le 2^{-\floor{d\log_{2}{(1/\rho)}}}{\max_{J'\in \calP(W_j, \rho)}\norm{f_{J'}}_{\avL^{t}(w_B)}} }.
\end{split}
\end{equation}
Since $M \leq K^{d}$, the claim \eqref{eq:ball-inflation:lqLt} follows by the triangle inequality from
\begin{equation}
\label{eq:ball-inflation:lqLt_cc}
\begin{split}
& \norm[\Big]{ \avprod_{j=1}^{M} \Big(\sum_{J \in \calJ_{j, \iota_j}} \norm{f_{J}}_{\avL^{t}(w_{B(x,1/\rho)})}^{\tilde{q}}\Big)^{1/\tilde{q}}}_{\avL^{p}_{x \in \Omega}}\\
& \leq C_{\theta,K,p,t, \tilde{q}} \rho^{-(\frac{d}{t} - \frac{d+n}{p} + \frac{\kappa((1-\theta)p/t)}{p})}
\avprod_{j=1}^{M} \Big( \sum_{J \in \calP(W_j, \rho)}\norm{f_{J}}_{\avL^{t}(w_\Omega)}^{\tilde{q}}\Big)^{1/\tilde{q}},
\end{split}
\end{equation}
which we will show for every choice of $\iota_1, \dotsc, \iota_M\in \Set{0, \dotsc, \floor{d\log_{2}(1/\rho)}} \cup \Set{\infty}$.

Since $\tilde{q}\le t$, by H\"older's inequality, the left hand side of \eqref{eq:ball-inflation:lqLt_cc} is bounded by
\begin{equation}
\Big(\avprod |\calJ_{j, \iota_j}|^{\frac{1}{\tilde{q}}-\frac{1}{t}}\Big) \norm[\Big]{ \avprod_{j=1}^{M} \Big(\sum_{J \in \calJ_{j, \iota_j}} \norm{f_{J}}_{\avL^{t}(w_{B(x,1/\rho)})}^{t}\Big)^{1/t}}_{\avL^{p}_{x \in B}}.
\end{equation}
By Proposition~\ref{prop:ball-inflation}, the last display is bounded by
\begin{equation}
C_{\theta,K,p,t, \tilde{q}} \rho^{-(\frac{d}{t} - \frac{d+n}{p} + \frac{\kappa((1-\theta)p/t)}{p})}
\Big(\avprod |\calJ_{j, \iota_j}|^{\frac{1}{\tilde{q}}-\frac{1}{t}}\Big) \avprod_{j=1}^{M} \Big( \sum_{J \in \calJ_{j, \iota_j}}\norm{f_{J}}_{\avL^{t}(w_B)}^{t}\Big)^{1/t}.
\end{equation}
It remains to observe that, for every $\iota$, we have
\begin{equation}\label{201031e4_25}
|\calJ_{j, \iota}|^{\frac{1}{\tilde{q}}-\frac{1}{t}}\Big( \sum_{J \in \calJ_{j, \iota}}\norm{f_{J}}_{\avL^{t}(w_B)}^{t}\Big)^{1/t} \lesssim
\Big( \sum_{J \in \calP(W_j, \rho)} \norm{f_{J}}_{\avL^{t}(w_B)}^{\tilde{q}}\Big)^{1/\tilde{q}}.
\end{equation}
If $\iota\neq \infty$, this follows as the summands on the left hand side are comparable.
For $\iota=\infty$, we have
\begin{equation}
\begin{split}
\MoveEqLeft
|\calJ_{j, \infty}|^{\frac{1}{\tilde{q}}-\frac{1}{t}}
\Big( \sum_{J \in \calJ_{j, \infty}} \norm{f_{J}}_{\avL^{t}(w_B)}^{t}\Big)^{1/t}
\\ &\leq
|\calJ_{j, \infty}|^{\frac{1}{\tilde{q}}}
\max_{J \in \calJ_{j, \infty}} \norm{f_{J}}_{\avL^{t}(w_B)}
\\ &\leq
\rho^{-d} 2^{-\floor{d\log_{2}(1/\rho)}} \max_{J'\in \calP(W_j, \rho)} \norm{f_{J'}}_{\avL^{t}(w_B)}
\\ &\lesssim
\Big( \sum_{J \in \calP(W_j, \rho)} \norm{f_{J}}_{\avL^{t}(w_B)}^{\tilde{q}}\Big)^{1/\tilde{q}}.
\qedhere
\end{split}
\end{equation}
\end{proof}

\section{Induction on scales}
\label{sec:induction-on-scales}
The upper bounds of $\Gamma_{q, p}(\bq)$ in Theorem~\ref{thm:main} will be proved by induction on the dimension $d$.
The main inductive step is contained in the following result, whose proof will occupy the whole Section~\ref{sec:induction-on-scales}.
One can apply Theorem~\ref{thm:main-inductive-step} repeatedly and then obtain the upper bounds in part \eqref{eq:dec-exponent-unrolled} of Theorem~\ref{thm:main}.
\begin{theorem}
\label{thm:main-inductive-step}
Let $d\ge 1$ and $n\ge 1$.
Let $\bq=(Q_1, \dotsc, Q_n)$ be a collection of quadratic forms in $d$ variables.
Let $2 \leq q \leq p < \infty$ and
\begin{equation}
\label{eq:lower-dim-dec-exponent}
\Lambda := \sup_{H}\Gamma_{q,p}(\bq|_H),
\end{equation}
where the sup is taken over all hyperplanes $H \subset \R^d$ that pass through the origin.
Then
\begin{equation}
\label{eq:dec-exponent:inductive-step}
\Gamma_{q,p}(\bq) \leq
\max \Bigl( \Lambda , \max_{0\leq n'\leq n} \Big[ d\bigl(1-\frac{1}{p}-\frac{1}{q} \bigr) - \numvar_{n'}(\bq)\bigl(\frac{1}{2}-\frac{1}{p}\bigr) - \frac{2(n-n')}{p}\Big] \Bigr).
\end{equation}
\end{theorem}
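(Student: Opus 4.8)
The plan is to run a bootstrapping (induction on scales) argument for the decoupling constant $\Dec_{q,p}(\bq,\delta)$ at scale $\delta$, iterating the ball-inflation inequality of Corollary~\ref{cor:ball-inflation:lqLt} many times. Fix a large dyadic integer $K$ and consider the cubes $W\in\Part{1/K}$ at scale $1/K$. The first step is a \emph{broad/narrow} (or uniform/non-uniform) decomposition in the spirit of \cite{MR3614930,MR3548534}: either most of the $L^p$ mass of $\sum_{\Box}f_\Box$ comes from a $\theta$-uniform subcollection $\calW\subseteq\Part{1/K}$ of scale-$1/K$ cubes, in which case one gains the transversality afforded by Lemma~\ref{lem:BL-uniform}, or else the mass concentrates near the zero set of a low-degree polynomial, which after an affine rescaling and a pigeonholing lives on (a neighborhood of) a hyperplane $H$, so that the contribution is controlled by $\Dec_{q,p}(\bq|_H,\cdot)$ and hence eventually by $\delta^{-\Lambda-\epsilon}$. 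The function $\kappa_{\bq}$ appearing in the ball inflation is then estimated by Corollary~\ref{cor:kappa-bound}, which converts the geometric exponent into the arithmetic quantity $\numvar_{n'}(\bq)$ that appears on the right-hand side of \eqref{eq:dec-exponent:inductive-step}.

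Concretely, in the uniform case I would interpolate the flat (trivial, $\ell^q$ Minkowski) decoupling at the scale-$1/K$ level with the $L^2$ orthogonality estimate to produce a lower-dimensional ``intermediate'' decoupling on each $W$, then apply Corollary~\ref{cor:ball-inflation:lqLt} to pass from $\ell^{\tilde q}L^t$ information at scale $\rho$ to the ball of radius $\rho^{-2}$, choosing the auxiliary exponent $t$ (the degree of multilinearity enters through $M=|\calW|$) so that the exponent $\tfrac{d}{t}-\tfrac{d+n}{p}+\tfrac{\kappa_{\bq}((1-\theta)p/t)}{p}$ is as favorable as possible. Running this ball-inflation step across all intermediate scales $\delta^{b^k}$, $k=0,1,\dots$, with a geometrically decreasing sequence of scales, and feeding in at each stage the decoupling constant at the previous scale, yields a recursive inequality of the shape
\[
\Dec_{q,p}(\bq,\delta)\lesssim_{\epsilon,K} \delta^{-\epsilon}\max\Bigl(\delta^{-\Lambda},\ \bigl(\text{sup of the }\numvar\text{-expression over }n'\bigr)\Bigr)\cdot \Dec_{q,p}(\bq,\delta^{1-o(1)})^{o(1)},
\]
and letting $K\to\infty$ (so that $\theta\to 0$ by the uniformity extraction, and the loss $|\log\rho|^{K^d}$ and the losses in the uniform/non-uniform split become sub-polynomial in $\delta$) closes the bootstrap and gives \eqref{eq:dec-exponent:inductive-step}. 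The $L^2$ decoupling input needed to seed the intermediate estimate is the standard flat/orthogonality one and requires only that the pieces $\calU_\Box$ have bounded overlap after the relevant rescaling, which is built into the nesting property \eqref{eq:Uncertainty-box-nested}.

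The main obstacle I anticipate is bookkeeping the two competing regimes in the double maximum: the ball-inflation exponent $\kappa_{\bq}((1-\theta)p/t)$ depends on $t$ in a piecewise-linear way (through the $\sup$ over subspaces $V$ in \eqref{eq:kappa(alpha)}, equivalently over $n'$ via Corollary~\ref{cor:kappa-bound}), and the optimal choice of the multilinearity parameter — hence whether one is effectively using the ``local'' or the ``discrete'' Brascamp--Lieb regime — changes depending on where $p$ sits relative to the ``fake kink''. Making the iteration uniform in $\delta$ across \emph{all} $p\in[2,\infty)$ simultaneously, rather than just at the critical exponents and then interpolating, is exactly the point where the scale-dependent Brascamp--Lieb inequalities of \cite{arxiv:1904.06450} (via Theorem~\ref{thm:BL} and Lemma~\ref{lem:BL-uniform}) are essential; getting the arithmetic of the exponents to line up so that the self-improving inequality actually closes to the claimed bound, and verifying that the non-uniform branch always reduces cleanly to $\Lambda$ after rescaling, will be the delicate part of the argument.
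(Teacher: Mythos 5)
Your plan follows essentially the same route as the paper's proof: a Bourgain--Guth broad--narrow decomposition in which the narrow (near-variety) branch is reduced to $\Lambda$ (Lemma~\ref{201030lemma5_5} and Proposition~\ref{prop:Bourgain-Guth}, which in turn require the uniformity in $H$ of the lower-dimensional decoupling constants supplied by the compactness argument of Theorem~\ref{thm:main-uniform} and Corollary~\ref{201029coro5_4}), while the uniform branch is handled by induction on scales combining $L^2$ orthogonality, rescaled linear decoupling, and the scale-dependent ball inflation of Corollary~\ref{cor:ball-inflation:lqLt} with $\kappa_{\bq}$ converted into $\numvar_{n'}(\bq)$ via Corollary~\ref{cor:kappa-bound}. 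The paper merely packages your multi-scale iteration more compactly, via the asymptotic quantities $a_{2},a_{t},a_{p}$ in \eqref{eq:tilde-a} and a short algebraic combination of the three resulting inequalities followed by $\theta\to 0$, rather than iterating explicitly across scales $\delta^{b^{k}}$, so the exponent arithmetic you flag as delicate does close exactly as you hope.
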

In the proof of Theorem~\ref{thm:main-inductive-step}, we may assume that
\begin{equation}
\label{eq:Gamma>Lambda}
\Gamma := \Gamma_{q,p}(\bq) > \Lambda,
\end{equation}
since otherwise \eqref{eq:dec-exponent:inductive-step} already holds.
The assumption \eqref{eq:Gamma>Lambda} is convenient, because it means that the multilinear terms in Proposition~\ref{prop:Bourgain-Guth} below dominate the lower-dimensional terms.
On a technical level, it allows us to define the quantities \eqref{eq:tilde-a} that are central to the bootstrapping argument.

\subsection{Stability of decoupling constants and lower dimensional contributions}
In order to make use of the quantity \eqref{eq:lower-dim-dec-exponent}, we need to show that we have a bound for the decoupling constants $\Dec_{q,p}(\bq|_{H},\delta)$ that is uniform in the hyperplanes $H$.
More generally, it turns out that decoupling constants can be bounded locally uniformly in the coefficients of the quadratic forms $\bq$.
Although it is possible to obtain such uniform bounds by keeping track of the dependence on $\bq$ in all our proofs, we use this opportunity to record a compactness argument for decoupling constants whose validity is not restricted to quadratic forms.

\begin{theorem}
\label{thm:main-uniform}
For every $2\leq q \leq p < \infty$, $\epsilon>0$, and real quadratic forms $Q_{1},\dotsc,Q_{n}$ in $d$ variables, there exist $C_{\bq,\epsilon,q,p}<\infty$ and a neighborhood $\calQ$ of $(Q_{1},\dotsc,Q_{n})$ such that, for every $(\tilde{Q}_{1},\dotsc,\tilde{Q}_{n}) \in \calQ$ and every $\delta \in (0,1)$, we have
\[
\Dec_{q, p}((\tilde{Q}_{1},\dotsc,\tilde{Q}_{n}),\delta)
\leq C_{\bq,\epsilon,q,p} \delta^{-\Gamma_{q,p}(\bq)-\epsilon},
\]
where $\Gamma_{q,p}(\bq)$ is given by \eqref{bestconstant}.
\end{theorem}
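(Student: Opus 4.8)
The plan is to deduce the estimate from two soft facts, each with constants independent of the particular tuple $\bq$: the \emph{stability of decoupling constants at a single fixed scale} under perturbation of the coefficients, and the \emph{parabolic self-similarity} of quadratic decoupling. Throughout write $\Gamma:=\Gamma_{q,p}(\bq)$, and note that $\Gamma\ge 0$: setting all but one $f_{\Box'}$ equal to $0$ in \eqref{equ:decoupling_definition} gives $\Dec_{q,p}(\bq,\delta)\ge 1$ for every dyadic $\delta$, hence $\Gamma\ge 0$ by \eqref{bestconstant}. For self-similarity, parabolic rescaling of a cube $\Box\in\Part{\sigma}$ to $[0,1]^{d}$ carries the part of $S_{\bq}$ over $\Box$ onto the graph of the \emph{same} quadratic tuple $\bq$ (the quadratic part is unchanged; the leftover affine part is a shear and translation of $\R^{d+n}$ under which the parallelepipeds $\calU_{\bullet}(\bq)$ transform compatibly), and likewise for $\widetilde{\bq}$; hence the usual two-stage argument yields submultiplicativity
\[
\Dec_{q,p}(\bq,\sigma_{1}\sigma_{2})\le C_{2}\,\Dec_{q,p}(\bq,\sigma_{1})\,\Dec_{q,p}(\bq,\sigma_{2}),
\]
with $C_{2}=C_{2}(d,n)$, and the same for $\widetilde{\bq}$ with the same constant.

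For the single-scale stability I would fix a dyadic $\sigma\in(0,1)$ and inspect the definition of $L_{\Box}$: replacing $\bq$ by $\widetilde{\bq}$ leaves the projection of $\calU_{\Box}(\bq)$ to $\R^{d}$ unchanged and only tilts and translates $\calU_{\Box}$ \emph{within the last $n$ coordinate directions, rescaled by $\sigma^{2}$}. Since $|\widetilde{\bq}(a)-\bq(a)|+\norm{\nabla\widetilde{\bq}(a)-\nabla\bq(a)}\lesssim_{d}\norm{\widetilde{\bq}-\bq}$ uniformly for $a\in[0,1]^{d}$, there is $c=c(d,n)>0$ such that $\norm{\widetilde{\bq}-\bq}<c\sigma^{2}$ forces, for every $\Box\in\Part{\sigma}$,
\[
\calU_{\Box}(\widetilde{\bq})\subseteq\bigcup_{k\in\mathcal{K}}\bigl(\calU_{\Box}(\bq)+(0,\sigma^{2}k)\bigr),
\]
where $\mathcal{K}\subset\mathbb{Z}^{n}$ is a fixed finite set of cardinality $O_{n}(1)$ (a universal translation pattern, as these shifts are all along the last $n$ axes). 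Splitting each $f_{\Box}$ with $\supp\widehat{f}_{\Box}\subseteq\calU_{\Box}(\widetilde{\bq})$, via a smooth partition of unity subordinate to this cover, into $O_{n}(1)$ pieces of the form $e^{i\sigma^{2}k\cdot y}h_{\Box,k}$ with $\supp\widehat{h}_{\Box,k}\subseteq\calU_{\Box}(\bq)$ — the phase $e^{i\sigma^{2}k\cdot y}$ being independent of $\Box$ — and applying \eqref{equ:decoupling_definition} for $\bq$ to each family $\{h_{\Box,k}\}_{\Box}$ (the intervening Fourier projections onto the $\calU_{\Box}(\bq)$ being $L^{p}$-bounded uniformly), one would obtain
\[
\Dec_{q,p}(\widetilde{\bq},\sigma)\le C_{1}\,\Dec_{q,p}(\bq,\sigma)
\qquad\text{whenever}\qquad\norm{\widetilde{\bq}-\bq}<c(d,n)\,\sigma^{2},
\]
with $C_{1}=C_{1}(d,n,p)$. (In particular $\Dec_{q,p}(\bq,\sigma)<\infty$; the crude bound $\Dec_{q,p}(\bq,\sigma)\le\sigma^{-d}$ from the triangle inequality and Hölder is enough for this.)

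To conclude I would put $C_{0}:=\max(C_{1},C_{2})$. By \eqref{bestconstant} applied with $\epsilon/8$, $\Dec_{q,p}(\bq,\delta)\le C_{\bq,\epsilon/8}\,\delta^{-\Gamma-\epsilon/8}$ for all dyadic $\delta$; choose a dyadic base scale $\delta_{0}$, depending on $\bq,\epsilon,d,n,p,q$, small enough that $C_{\bq,\epsilon/8}\le\delta_{0}^{-\epsilon/8}$ and $C_{0}\le\delta_{0}^{-\epsilon/8}$, so that $\Dec_{q,p}(\bq,\delta_{0})\le\delta_{0}^{-\Gamma-\epsilon/4}$, and set $\calQ:=\Set{\widetilde{\bq}\given\norm{\widetilde{\bq}-\bq}<c(d,n)\,\delta_{0}^{2}}$. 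For $\widetilde{\bq}\in\calQ$ the single-scale step gives $\Dec_{q,p}(\widetilde{\bq},\delta_{0})\le C_{0}\delta_{0}^{-\Gamma-\epsilon/4}\le\delta_{0}^{-\Gamma-3\epsilon/8}$, and iterating submultiplicativity, $\Dec_{q,p}(\widetilde{\bq},\delta_{0}^{N})\le C_{0}^{\,N-1}\Dec_{q,p}(\widetilde{\bq},\delta_{0})^{N}\le\delta_{0}^{-N(\Gamma+\epsilon/2)}$ for every $N\ge 1$. For a general dyadic $\delta\in(0,1)$: if $\delta\ge\delta_{0}$ the trivial bound gives $\Dec_{q,p}(\widetilde{\bq},\delta)\le\delta^{-d}\le\delta_{0}^{-d}$, and if $\delta<\delta_{0}$ take $N\ge 1$ with $\delta_{0}^{N+1}<\delta\le\delta_{0}^{N}$ and use submultiplicativity once more,
\[
\Dec_{q,p}(\widetilde{\bq},\delta)\le C_{0}\,\Dec_{q,p}(\widetilde{\bq},\delta_{0}^{N})\,\Dec_{q,p}(\widetilde{\bq},\delta\delta_{0}^{-N})\le C_{0}\,\delta_{0}^{-N(\Gamma+\epsilon/2)}\,(\delta\delta_{0}^{-N})^{-d},
\]
which, since $\delta\delta_{0}^{-N}\in(\delta_{0},1]$, $\delta_{0}^{N}\ge\delta$ and $\Gamma+\epsilon/2\ge 0$, is at most $C_{0}\delta_{0}^{-d}\,\delta^{-\Gamma-\epsilon/2}$. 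Taking $C_{\bq,\epsilon}:=C_{0}\delta_{0}^{-d}$ gives $\Dec_{q,p}(\widetilde{\bq},\delta)\le C_{\bq,\epsilon}\,\delta^{-\Gamma-\epsilon}$ for all $\widetilde{\bq}\in\calQ$ and all dyadic $\delta\in(0,1)$.

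The point that makes this iteration close is that $C_{1},C_{2}$ are independent of $\bq$; a $\bq$-dependent loss in either would accumulate over the $\sim\log(1/\delta)/\log(1/\delta_{0})$ steps and ruin the bound. The submultiplicativity constant $C_{2}$ is routine parabolic rescaling, and is where the quadratic structure is genuinely used — the rescaled surface over a small cube is an \emph{exact} affine copy of the original, so the submultiplicativity is for one and the same tuple. The single-scale constant $C_{1}$ is the part requiring a little care, and I expect it to be the main obstacle; it is made tractable by the fact that the normal directions of all the parallelepipeds $\calU_{\Box}$ are the fixed coordinate axes, so that a small perturbation of $\bq$ fattens them only along universal directions — a fattening that the modulation argument above absorbs at a $\bq$-independent cost.
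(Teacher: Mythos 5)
Your proposal is correct and is essentially the paper's own proof: both arguments transfer the known bound for $\bq$ to the perturbed tuple at a single fixed coarse scale (the paper by observing that when $\|\widetilde{\bq}-\bq\|\lesssim \sigma^{2}$ the boxes $\calU_{J}(\widetilde{\bq})$ sit inside harmless enlargements of the $\bq$-boxes, you by covering with $O(1)$ translates and modulating, which is the same point handled a bit more explicitly) and then iterate the exact affine-rescaling self-similarity of $\Dec_{q,p}(\widetilde{\bq},\cdot)$, absorbing the per-step constant by taking the coarse scale small enough relative to $C_{\bq,\epsilon}$. Packaging this as ``single-scale stability plus submultiplicativity iterated from $\delta_{0}$'' rather than iterating the two-scale inequality $\Dec_{q,p}(\widetilde{\bq},\delta)\leq C_{\bq,\epsilon}\sigma^{-\Gamma-\epsilon/2}\Dec_{q,p}(\widetilde{\bq},\delta/\sigma)$ directly, as the paper does, is only a cosmetic reorganization.
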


\begin{lemma}[Affine rescaling]
\label{lem:affine-scaling}
Let $2 \leq q \leq p < \infty$.
For any dyadic numbers $0< \delta \leq \sigma \leq 1$ and every $J \in \Part{\sigma}$,
\begin{equation}
\norm{ \sum_{\Box \in \Part[J]{\delta}}f_{\Box}}_{L^p} \leq \Dec_{q,p}(\bq, \delta/\sigma) ( \sum_{\Box \in \Part[J]{\delta}} \norm{ f_{\Box}}_{L^p}^q )^{1/q}.
\end{equation}
\end{lemma}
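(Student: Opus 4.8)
The statement is an instance of affine (parabolic) rescaling, so the plan is to exhibit one fixed invertible affine change of variables on $\R^{d+n}$ that carries the piece of $S_{\bq}$ lying over $J$ onto the full surface $S_{\bq}$ over $[0,1]^d$, and then to quote the definition of $\Dec_{q,p}(\bq,\delta/\sigma)$. We may assume $\delta<\sigma$, the case $\delta=\sigma$ being trivial since then $\Part[J]{\delta}=\{J\}$; write $\delta':=\delta/\sigma\in(0,1)$, which is again dyadic. Let $a$ be the corner of $J$ and $\phi(\xi'):=a+\sigma\xi'$, so that $\phi$ maps $[0,1]^d$ onto $J$ and restricts to a bijection from $\Part{\delta'}$ onto $\Part[J]{\delta}$, sending a cube $\Box'$ of side $\delta'$ and center $b$ to the cube $\Box:=\phi(\Box')$ of side $\delta$ and center $c:=a+\sigma b$.

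The heart of the matter is the identity
\begin{equation}
\label{eq:resc-neighborhoods}
A\bigl(\calU_{\Box'}(\bq)\bigr)=\calU_{\phi(\Box')}(\bq)\qquad\text{for all }\Box'\in\Part{\delta'},
\end{equation}
where the neighborhoods on the left are taken at scale $\delta'$, those on the right at scale $\delta=\sigma\delta'$, and $A(z)=A_0z+(a,\bq(a))$ is the fixed affine map with linear part
\[
A_0=\begin{pmatrix}\sigma I_d&0\\ \sigma\,\nabla\bq(a)^{T}&\sigma^2 I_n\end{pmatrix}.
\]
To prove \eqref{eq:resc-neighborhoods} I would use crucially that $\bq$ consists of quadratic forms, so that $\nabla\bq$ is linear; hence $\nabla\bq(c)=\nabla\bq(a)+\sigma\,\nabla\bq(b)$ and $\bq(c)=\bq(a)+\sigma\,\nabla\bq(a)^{T}b+\sigma^2\bq(b)$. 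Writing $\calU_{\Box'}(\bq)=(b,\bq(b))+L_{\Box'}^{T}([-C,C]^{d+n})$ and similarly for $\phi(\Box')$, it then suffices to verify the two matrix identities $A_0 L_{\Box'}^{T}=L_{\phi(\Box')}^{T}$ and $A_0(b,\bq(b))+(a,\bq(a))=(c,\bq(c))$; both are short block computations in which the terms carrying $\nabla\bq(b)$, respectively $b$, cancel, so that $A_0$ ends up independent of $\Box'$. The same constant $C$ works on both sides because the forms $\bq$ are literally unchanged by the rescaling — this is exactly the role of the $\sigma^2$ dilation in the $\eta$-variables. This verification is really the only content of the lemma, and it is the step I would be most careful about.

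Granting \eqref{eq:resc-neighborhoods}, the conclusion follows by transport of structure. Given $f_{\Box}$ with $\supp\widehat{f_{\Box}}\subseteq\calU_{\Box}=\calU_{\phi(\Box')}(\bq)$, set $g_{\Box'}(x):=|\det A_0|^{-1}e^{-i(a,\bq(a))\cdot A_0^{-T}x}f_{\Box}(A_0^{-T}x)$, so that $\widehat{g_{\Box'}}(\zeta)=\widehat{f_{\Box}}(A_0\zeta+(a,\bq(a)))$ is supported in $A^{-1}\bigl(\calU_{\phi(\Box')}(\bq)\bigr)=\calU_{\Box'}(\bq)$ by \eqref{eq:resc-neighborhoods}. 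Since $A_0$ and the modulation do not depend on $\Box'$, one has $(\sum_{\Box'}g_{\Box'})(x)=|\det A_0|^{-1}e^{-i(a,\bq(a))\cdot A_0^{-T}x}(\sum_{\Box}f_{\Box})(A_0^{-T}x)$. Applying the definition of $\Dec_{q,p}(\bq,\delta')$ to the family $\{g_{\Box'}\}_{\Box'\in\Part{\delta'}}$, and noting that $\|\sum_{\Box'}g_{\Box'}\|_{L^p}$ and each $\|g_{\Box'}\|_{L^p}$ differ from $\|\sum_{\Box}f_{\Box}\|_{L^p}$, respectively $\|f_{\Box}\|_{L^p}$, by the common factor $|\det A_0|^{-1+1/p}=\sigma^{-(d+2n)(1-1/p)}$, this factor cancels between the two sides and the asserted inequality drops out. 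I do not expect any genuine obstacle beyond this bookkeeping; as an alternative route, the same affine map can be read off directly in the Fourier‑extension picture by substituting $\xi=a+\sigma\xi'$ into $E^{\bq}_{\Box}g$ and matching the phases, if one prefers to avoid writing down $A_0$ explicitly.
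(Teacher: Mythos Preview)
Your argument is correct and is precisely the standard affine/parabolic rescaling that the paper has in mind; the paper does not give its own proof but refers to Bourgain--Demeter and \cite{MR4143735}, where the same change of variables is carried out. Your block computations $A_0 L_{\Box'}^{T}=L_{\phi(\Box')}^{T}$ and $A_0(b,\bq(b))+(a,\bq(a))=(c,\bq(c))$ check out verbatim using the linearity of $\nabla\bq$, and the cancellation of the Jacobian factor $\sigma^{-(d+2n)(1-1/p)}$ on both sides is exactly the point.
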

Such a lemma has also been standard in the decoupling literature, see for instance \cite[Section 4]{MR3374964} or \cite[Lemma 1.23]{MR4143735}.

\begin{proof}[Proof of Theorem~\ref{thm:main-uniform}]
Let $\sigma = \sigma(\bq,\epsilon,q,p)$ be a small number, which will be determined later.
We may assume that $\delta \leq \sigma/4$.
We consider a tuple $(\tilde{Q}_{1},\dotsc,\tilde{Q}_{n})$ such that
\begin{equation}\label{201020e5_5}
\sup_i \norm{\Hess(\tilde{Q}_i-{Q}_i)} < \sigma^{2}/(10d+10).
\end{equation}
Then, for every $J \in \calP(\sigma)$ and $\Box \in \Part[J]{\delta}$, we have
\begin{equation}
\label{eq:U-tildeQ}
\calU_{\Box}(\tilde{\bq}) \subseteq \calU_{J}(\bq).
\end{equation}

Take a collection of functions $f_{\Box}$ with $\supp \hat{f}_{\Box}\subset \calU_{\Box}(\tilde{\bq})$ for each $\Box\in \calP(\delta)$.
Using \eqref{eq:U-tildeQ} and the definition of $\Gamma_{q,p}(\bq)$, we obtain
\begin{equation}
\begin{split}
\norm{ \sum_{\Box \in \Part{\delta}}f_{\Box}}_{L^p}
&\leq {C}_{\bq,\epsilon}\sigma^{-\Gamma_{q,p}(\bq)-\epsilon/2} ( \sum_{J \in \Part{\sigma}} \norm{ f_{J}}_{L^p}^q )^{1/q}
\\& \leq {C}_{\bq,\epsilon}
\sigma^{-\Gamma_{q,p}(\bq)-\epsilon/2}
\Dec_{q,p}(\tilde{\bq}, \delta/\sigma)
( \sum_{\Box \in \Part{\delta}} \norm{ f_{\Box}}_{L^p}^q )^{1/q}.
\end{split}
\end{equation}
The last inequality follows from the affine rescaling (Lemma~\ref{lem:affine-scaling}).
Hence, we obtain
\begin{equation}
\Dec_{q,p}(\tilde{\bq},\delta) \leq
{C}_{\bq,\epsilon}
\sigma^{-\Gamma_{q,p}(\bq)-\epsilon/2}
\Dec_{q,p}(\tilde{\bq}, \delta/\sigma).
\end{equation}
We iterate this inequality $\log_{\sigma^{-1}}(\delta^{-1})$-times and obtain
\begin{equation}
\Dec_{q,p}(\tilde{\bq},\delta) \leq
C_{\bq, \epsilon}
(\delta^{-1})^{\log_{\sigma^{-1}}{{C}_{\bq,\epsilon}}}\delta^{-\Gamma_{q,p}(\bq)-\epsilon/2}.
\end{equation}
It suffices to take $\sigma$ small enough so that $\log_{\sigma^{-1}} {C}_{\bq,\epsilon} \leq \epsilon/2$.
\end{proof}

\begin{corollary}\label{201029coro5_4}
Let $2\le q\le p<\infty$.
For each $\epsilon>0$, there exists $C_{\bq, \epsilon, q, p}<\infty$ such that, for every linear subspace $H \subset \R^{d}$ of co-dimension one, we have
\begin{equation}
\calD_{q, p}(\bq|_H, \delta)\le C_{\bq, \epsilon, q, p} \delta^{-\Lambda-\epsilon},
\end{equation}
where $\Lambda$ was defined in \eqref{eq:lower-dim-dec-exponent}.
\end{corollary}
\begin{proof}
Recall from \eqref{eq:Q-on-subspace} that the $\bq|_{H}$ are parametrized by the orthogonal group $O(d)$.
Since $\bq|_{H}$ depends continuously on the rotation used to define it, the group $O(d)$ is compact, and by Theorem~\ref{thm:main-uniform}, we obtain the claim.
\end{proof}

To prepare for the broad-narrow analysis of Bourgain and Guth \cite{MR2860188} in the following section, we need the following lemma that takes care of the case when frequency cubes are clustered near sub-varieties of low degrees.

\begin{lemma}[{\cite[Corollary 2.18]{MR4143735}}]\label{201030lemma5_5}
For every $d\ge 1$, $D>1$ and $\epsilon>0$, there exists $c=c(D, \epsilon)>0$ such that the following holds.
For every sufficiently large $K$, there exist
\begin{equation}\label{201030e5_10}
K^c\le K_1\le K_2\le \dots \le K_D\le \sqrt{K}
\end{equation}
such that for every non-zero polynomial $P$ in $d$ variables of degree at most $D$, there exist collections of pairwise disjoint cubes $\calW_j\subset \calP(1/K_j)$, $j=1, 2, \dots, D$, such that
\begin{equation}
\calN_{1/K}(Z_P)\cap [0, 1]^d\subset \bigcup_{j=1}^D \bigcup_{W\in \calW_j} W
\end{equation}
and
\begin{equation}
\norm[\Big]{\sum_{W\in \calW_j} f_W}\lesssim_{D, \bq, \epsilon, q, p} K_j^{\Lambda+\epsilon} \Big(\sum_{W\in \calW_j} \norm{f_W}_p^q \Big)^{1/q}.
\end{equation}
Here $\calN_{1/K}(Z_P)$ denotes the $1/K$ neighborhood of the zero set of $P$.
\end{lemma}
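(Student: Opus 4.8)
The plan is to follow the argument of \cite{MR4143735}, of which this lemma is a variant; the only genuinely new point is that the lower-dimensional decoupling exponent $\Lambda$ enters through the \emph{uniform}-in-$H$ bound of Corollary~\ref{201029coro5_4}. The proof has two essentially separate ingredients: a geometric multiscale covering of $\calN_{1/K}(Z_P)$, and a reduction of decoupling into cubes that are ``flat at their own scale'' to a decoupling for $\bq|_H$.

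\textbf{Step 1: the geometric covering.} I would produce the scales by a degree-reduction recursion on $P$, of the type underlying the Bourgain--Guth broad--narrow decomposition. Writing $K_j = K^{c_j}$ for a rapidly increasing finite sequence $c \le c_1 < \dots < c_D \le 1/2$ to be fixed, \eqref{201030e5_10} holds automatically. At each of the $D$ stages one peels off the part of $\calN_{1/K}(Z_P)$ lying near the non-singular locus of the current polynomial on which its gradient is not too small; there the zero set is a smooth hypersurface which, after the affine rescaling normalizing the gradient and at the scale $1/K_j$ assigned to that stage, lies within $C/K_j$ of an affine hyperplane over each cube of $\Part{1/K_j}$, and these cubes are put into $\calW_j$. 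The residual part is contained in $\calN_{1/K}(Z_{\tilde P})$ for a nonzero $\tilde P$ of degree $\le D-1$ built from the partial derivatives of the current polynomial, and one recurses; after $\le D$ stages the residual variety is empty. The delicate part --- exactly what is borrowed from \cite{MR4143735} --- is to choose the exponents $c_j$ so that ``gradient not too small at stage $j$'' genuinely forces the zero set into a $C/K_j$-plate over each $1/K_j$-cube: the second derivatives, hence the admissible scale, are controlled via a \L ojasiewicz-type lower bound on the gradient, and the recursion must be arranged so that this never requires a scale finer than $\sqrt K$.

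\textbf{Step 2: decoupling.} Fix $j$, and group the cubes of $\calW_j$ by the affine hyperplane $H$ attached to them in Step~1. Within the $C/K_j$-neighborhood of a fixed $H$, the portion of $S_{\bq}$ above it is, up to an affine change of variables and up to errors of size $O(1/K_j)$ absorbed by an application of the uncertainty principle as in the passage to \eqref{FJbound}, the graph of $\bq|_H$ (composed with an invertible linear map, which does not affect decoupling). Hence the functions attached to the cubes of $\calW_j$ inside this neighborhood decouple with constant $\Dec_{q,p}(\bq|_H, 1/K_j) \le C_{\bq,\epsilon,q,p}\, K_j^{\Lambda+\epsilon}$ by Corollary~\ref{201029coro5_4}, uniformly in $H$; summing these over the hyperplanes occurring at stage $j$ (using the structure provided by Step~1 to control this sum) and then over the $D$ stages gives the claim, with the implicit constant depending on $D,\bq,\epsilon,q,p$.

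\textbf{Main obstacle.} The crux is Step~1: arranging the finitely many scales so that the multiscale covering exists and each of its cubes genuinely sees $Z_P$ as an affine hyperplane at its own scale. This is precisely the Bourgain--Guth-type lemma in the form of \cite{MR4143735}; once it is in hand, Step~2 is soft, the one new subtlety being to invoke the uniform-in-$H$ decoupling bound of Corollary~\ref{201029coro5_4} instead of a bound for one fixed hyperplane.
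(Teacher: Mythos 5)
Your proposal matches the paper's own treatment: the paper does not reprove this lemma but defers to \cite{MR4143735} (Corollary 2.18 there), observing that the only input needed --- a bound on $\Dec_{q,p}(\bq|_H,\delta)$ that is uniform in the hyperplane $H$ (Hypothesis 2.4 there) --- is exactly what Corollary~\ref{201029coro5_4} supplies, and that the argument carries over from $p=q$ to $q\le p$ without change. Your two-step reconstruction (multiscale covering of $\calN_{1/K}(Z_P)$ by cubes that see $Z_P$ as flat at their own scale, then cylindrical decoupling with the uniform-in-$H$ constant) is precisely the cited argument, and you identify the same key new point as the paper does.
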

This lemma was stated in \cite{MR4143735} only for $p=q$ and with Fourier support condition that is slightly different from \eqref{eq:Uncertainty-box}.
The same proof works also for $q\le p$ and with Fourier support condition \eqref{eq:Uncertainty-box} without any change, and we will therefore not repeat it here.
The main hypothesis of \cite[Corollary 2.18]{MR4143735} is \cite[Hypothesis 2.4]{MR4143735}, which is exactly what we verified in Corollary~\ref{201029coro5_4}.

\subsection{Multilinear decoupling}\label{subsection_multi}
For a positive integer $K$, a transversality parameter $\theta>0$, and $0 < \delta < K^{-1}$, the \emph{multilinear decoupling constant}
\begin{equation}
\MulDec(\delta, \theta, K)=\MulDec(\bq, \delta, \theta, K)
\end{equation}
is the smallest constant such that the inequality
\begin{multline}
\label{eq:multilin-Dec}
\Bigl( \int_{\R^{d+n}} \bigl( \avprod_{j=1}^{M} \norm{ f_{W_j} }_{\avL^{p}(B(x,K))} \bigr)^{p} \dif x \Bigr)^{1/p}\\
\le \MulDec_{q,p}(\bq, \delta, \theta, K)
\avprod_{j=1}^{M} \Bigl( \sum_{\Box \in \Part[W_j]{\delta}} \norm{f_{\Box}}_{L^p(\R^{d+n})}^{q} \Bigr)^{\frac{1}{q}}
\end{multline}
holds for every choice of functions $f_{\Box}$ and every $\theta$-uniform set $\Set{W_{1},\dotsc,W_{M}} \subseteq \Part{K^{-1}}$ with $1\leq M \leq K^{d}$.

We use a version of the Bourgain--Guth reduction of linear to multilinear estimates \cite{MR2860188}.
Estimates of a similar form already appeared in works of Bourgain and Demeter, see \cite{MR3736493} and \cite{MR3614930} for instance.
The version below is a minor variant of \cite[Proposition 2.33]{MR4143735}.
This is the place where the uniform bound in Theorem~\ref{thm:main-uniform} is used.
\begin{proposition}
\label{prop:Bourgain-Guth}
Let $2 \leq q \leq p < \infty$.
Let $\Lambda$ be given by \eqref{eq:lower-dim-dec-exponent}.
Then, for each $\epsilon>0$ and $\theta>0$, there exists $K$ such that
\begin{equation}
\Dec_{q,p}(\bq, \delta)
\lesssim_{\epsilon, \theta}
\delta^{-\Lambda-\epsilon}
+ \delta^{-\epsilon} \max_{\delta\le \delta'\le 1; \delta' \text{dyadic}} \Big[\big(\frac{\delta'}{\delta} \big)^{\Lambda} \MulDec_{q,p}(\bq, \delta', \theta, K)\Big].
\end{equation}
\end{proposition}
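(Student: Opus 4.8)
The plan is to implement the Bourgain--Guth broad-narrow dichotomy at scale $1/K$, iterated down to scale $\delta$. Throughout, fix $\epsilon,\theta>0$ and let $K$ be a large dyadic integer to be chosen at the end (large enough that all the $\epsilon$-losses accumulated over $\log_K(1/\delta)$ iterations are absorbed, and large enough to apply Lemma~\ref{201030lemma5_5} with $D=d$).

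First I would set up the single-scale dichotomy. Given $f=\sum_{\Box\in\Part{\delta}}f_{\Box}$ and a ball $B$ of radius $\delta^{-2}$, for each ball $B_K\subset B$ of radius $K$ decompose $[0,1]^d$ into the cubes $W\in\Part{1/K}$ and set $\|f_W\|_{\avL^p(B_K)}$ as the relevant local pieces. Call $B_K$ \emph{narrow} if there is a non-zero polynomial $P$ of degree $\le d$ such that all but a $\theta$-fraction of the cubes $W$ with $\|f_W\|_{\avL^p(B_K)}$ non-negligible meet $\calN_{1/K}(Z_P)\cap[0,1]^d$, and \emph{broad} otherwise. On broad balls, after discarding the $O(K^{d})$-boundedly many dominant $W$'s one sees that some $\theta$-uniform $M$-tuple $W_1,\dotsc,W_M$ (with $1\le M\le K^d$) carries the $L^p$-norm up to a constant $C_K$, so that pointwise
\begin{equation}
\|f\|_{\avL^p(B_K)}\lesssim_{K} \max_{\substack{W_1,\dotsc,W_M\\ \theta\text{-uniform}}}\ \avprod_{j=1}^{M}\|f_{W_j}\|_{\avL^p(B_K)}.
\end{equation}
On narrow balls, one applies Lemma~\ref{201030lemma5_5}: the cubes meeting $\calN_{1/K}(Z_P)$ are organized into collections $\calW_j\subset\calP(1/K_j)$, $j=1,\dotsc,d$, with $K^c\le K_1\le\dots\le K_d\le\sqrt K$, and the lemma gives $\|\sum_{W\in\calW_j}f_W\|_p\lesssim K_j^{\Lambda+\epsilon}(\sum_{W\in\calW_j}\|f_W\|_p^q)^{1/q}$; summing over $j$ and over the narrow balls contributes a term of size $\delta^{-\epsilon}\max_j K_j^{\Lambda+\epsilon}\cdot(\text{decoupling at intermediate scale }1/K_j)$.

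The second step is to iterate. Combining the broad and narrow bounds at scale $1/K$ and unwinding the recursion, the narrow contributions always come with a gain: each $W\in\calW_j$ is a cube of side $1/K_j\ge K^c$, and inside it one rescales (Lemma~\ref{lem:affine-scaling}, the affine rescaling) and reapplies the whole estimate to $\bq$ (not to a lower-dimensional restriction) at the smaller relative scale $\delta K_j$ — wait, more precisely the narrow term produces, after rescaling, a factor $\Dec_{q,p}(\bq,\delta K_j)$ times $K_j^{\Lambda+\epsilon}$, and since $K_j^{-1}\le K^{-c}$ this is a genuine induction on scales whose total cost over $O(\log_K(1/\delta))$ steps is $\delta^{-\Lambda-\epsilon'}$ for $\epsilon'$ comparable to $\epsilon$ once $K$ is large. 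The broad contributions, by definition of $\MulDec$, get replaced by $\MulDec_{q,p}(\bq,\delta',\theta,K)$ at the various intermediate scales $\delta'$ encountered; tracking the powers of $\delta'/\delta$ that accumulate from the narrow rescalings gives exactly the factor $(\delta'/\delta)^{\Lambda}$ in front of $\MulDec$. Taking the max over dyadic $\delta'\in[\delta,1]$ and absorbing all $K$-dependent constants and $\epsilon$-powers yields the claimed bound.

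The main obstacle I expect is bookkeeping the iteration cleanly: one must verify that at every stage the ``narrow'' pieces really do decouple with constant $K_j^{\Lambda+\epsilon}$ (this is where Corollary~\ref{201029coro5_4}, i.e.\ the \emph{uniform-in-}$H$ bound from Theorem~\ref{thm:main-uniform}, is essential — Lemma~\ref{201030lemma5_5} is stated in terms of $\Lambda=\sup_H\Gamma_{q,p}(\bq|_H)$ and needs Hypothesis 2.4 of \cite{MR4143735}, which is precisely Corollary~\ref{201029coro5_4}), and that the geometric series in $\epsilon$-losses over $\log_K(1/\delta)$ many scales converges after choosing $K$ large. Since this is a minor variant of \cite[Proposition 2.33]{MR4143735}, with the sole change that we carry a general $q\le p$ rather than $q=p$ — and the broad-narrow argument is insensitive to this, the $\ell^q$ on the right-hand side being preserved at every step by Minkowski/rescaling — I would largely cite that argument and only indicate the two places ($\MulDec$ replacing the multilinear object, and the $\ell^q$ version of the affine rescaling in Lemma~\ref{lem:affine-scaling}) where the present formulation differs.
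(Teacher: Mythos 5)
Your overall route---the Bourgain--Guth decomposition at scale $1/K$, Lemma~\ref{201030lemma5_5} (fed by Corollary~\ref{201029coro5_4}) for cubes clustering near zero sets of degree $\leq d$ polynomials, the multilinear constant $\MulDec_{q,p}$ for $\theta$-uniform tuples, and then affine rescaling and iteration producing the factors $(\delta'/\delta)^{\Lambda}$---is exactly the paper's. The genuine gap is in the per-ball dichotomy as you state it. You declare a ball narrow when a \emph{single} polynomial of degree $\leq d$ captures all but a $\theta$-fraction of the significant cubes, and broad otherwise, and then assert that on broad balls some $\theta$-uniform tuple carries the norm. With that threshold the broad step fails: if the significant cubes split evenly between neighborhoods of two zero sets $Z_{P_1}$ and $Z_{P_2}$, no single polynomial of degree $\leq d$ captures a $(1-\theta)$-fraction (so the ball is ``broad'' for $\theta<1/2$), yet there is no dominating $\theta$-uniform subfamily, since any subfamily has at least half of its members meeting one of the two varieties. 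Moreover, in your narrow case the leftover $\theta$-fraction of significant cubes is never accounted for, and the phrase about ``discarding the dominant $W$'s'' does not repair either defect.

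The paper's proof avoids this with an iterative removal algorithm on each ball $B'$: starting from the significant set $\calS_0(B')$ (all $W$ within a factor $K^{-d}$ of the maximal $\norm{f_W}_{L^p(B')}$), as long as the current set is not $\theta$-uniform one picks a polynomial capturing \emph{more than a $\theta$-fraction}, peels the corresponding cubes off into the families $\calW_{\iota,j}\subset\calP(1/K_j)$ handled by Lemma~\ref{201030lemma5_5}, and repeats; this terminates after $O_{\theta}(\log K)$ rounds with a $\theta$-uniform (possibly empty) remainder $\calT(B')$. So a single ball in general contributes \emph{both} narrow terms (over $O(\log K)$ polynomials) and a broad term, and the cutoff $K^{-d}$ is precisely what makes all norms in $\calT(B')$ comparable, so that $\max_{W\in\calT(B')}\norm{f_W}_{L^p(B')}$ is dominated, up to $K^{O(d)}$, by the geometric mean over that $\theta$-uniform tuple and hence by $\MulDec_{q,p}$. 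With this algorithm in place, the rest of your outline (summing the narrow contributions over balls via weights $w_{B'}$, the lower bound $K_j\geq K^{c}$ absorbing the $\log K$ losses, affine rescaling and iteration in the spirit of \cite[Proposition 2.33]{MR4143735}, and the insensitivity of all steps to carrying $\ell^q$ rather than $\ell^p$ on the right-hand side) matches the paper's argument and goes through.
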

\begin{proof}[Proof of Proposition~\ref{prop:Bourgain-Guth}]
Let $\Set{f_{\Box}}_{\Box\in \calP(\delta)}$ be a collection of functions with $\supp\hat{f}_{\Box}\subset \calU_{\Box}$.
In the proof, for each dyadic cube $J$ with $l(J)\ge \delta$, we denote
\begin{equation}
f_{J}:=\sum_{\Box\in \calP(J, \delta)} f_{\Box}.
\end{equation}
Let $K$ be a large constant that is to be determined.
For each ball $B'\subset \R^{d+n}$ of radius $K$, we initialize
\begin{equation}\label{211127e5.17}
\calS_0(B'):=\Set{W\in \calP(1/K)\big|\ \ \norm{f_W}_{L^p(B')}\ge K^{-d} \max_{W'\in \calP(1/K)} \norm{f_{W'}}_{L^p(B')}}.
\end{equation}
We repeat the following algorithm.
Let $\iota\ge 0$.
If $\calS_{\iota}(B')=\emptyset$ or $\calS_{\iota}(B')$ is $\theta$-uniform, then we set
\begin{equation}
\calT(B'):=\calS_{\iota}(B')
\end{equation}
and terminate.
Otherwise, there exists a sub-variety $Z$ of degree at most $d$ such that
\begin{equation}
\label{eq:large-portion-of-Si}
|\Set{W\in \calS_{\iota}(B')| 2W\cap Z\neq \emptyset }|\ge \theta |\calS_{\iota}(B')|.
\end{equation}
Fix any such variety $Z$.
Note that $2W\cap Z \neq \emptyset \implies W \subseteq \calN_{2^{d}/K}(Z)$.
For $j \in \Set{1, \dotsc, d}$, let $\calW_{\iota, j}(B'):=\calW_j$ be as in Lemma~\ref{201030lemma5_5} with $K$ replaced by $K/2^{d}$.
Repeat this algorithm with
\begin{equation}
\calS_{\iota+1}(B'):=\calS_{\iota}(B')\setminus \bigcup_{j=1}^d \bigcup_{W\in \calW_{\iota, j}(B')} \calP(W, 1/K).
\end{equation}
This algorithm terminates after $O(\log K)$ steps, with an implicit constant depending on $\theta$, as in each step we remove at least the set on the left-hand side of \eqref{eq:large-portion-of-Si}, which constitutes a fixed proportion $\theta$ of $\calS_{\iota}(B')$.

To process the cubes in $\calW_{\iota, j}$ and to avoid multiple counting, we define
\begin{equation}
\widetilde{\calW}_{\iota, j}:=\Big( \calW_{\iota, j}\setminus \bigcup_{0\le \iota'<\iota} \calW_{\iota', j}\Big)\setminus \bigcup_{1\le j'< j}\bigcup_{\iota'} \bigcup_{W\in \calW_{\iota', j'}} \calP(W, 1/K_j).
\end{equation}
So far we see that every cube in \eqref{211127e5.17} can be covered by exactly one cube in
\begin{equation}
\Big(\bigcup_{\iota}\bigcup_{j} \widetilde{\calW}_{\iota, j}\Big)\bigcup \mc{T}(B').
\end{equation}
Therefore, by the triangle inequality we obtain
\begin{equation}\label{201031e5_22}
\begin{split}
& \norm{\sum_{\Box\subset [0, 1]^d} f_{\Box}}_{L^p(B')}\le \Big( \sum_{W\in \calP(1/K)} \norm{f_{W}}_{L^p(B')}^q\Big)^{1/q}\\
& + \sum_{\iota\lesssim \log K} \sum_{j=1}^d \norm[\Big]{\sum_{W\in \widetilde{\calW}_{\iota, j}} f_{W} }_{L^p(B')}+ \sum_{W\in \calT(B')}\norm{f_W}_{L^p(B')}.
\end{split}
\end{equation}
On the right hand side of \eqref{201031e5_22}, the first term is used to take care of the cubes that are not counted in \eqref{211127e5.17}.
Next, we will see how to handle all these three terms.
The second term on the right hand side will be processed via a standard localization argument (see for instance \cite[Remark 1.24]{MR4143735}) and Lemma~\ref{201030lemma5_5}.
It is bounded by
\begin{equation}\label{201031e5_23}
C_{\bq, \epsilon, p, q} \log K \sum_{j=1}^d K_j^{\Lambda+\epsilon} \Big(\sum_{W\in \calP(1/K_j)} \norm{f_W}_{L^p(w_{B'})}^q \Big)^{1/q}.
\end{equation}
Recall in Lemma~\ref{201030lemma5_5} that $K^c\le K_j\le \sqrt{K}$ for some $c=c(d, \epsilon)$ and every $j$.
This allows us to absorb $\log K$ by $K_j^{\epsilon}$, which is the only place where the lower bound $K^c$ in \eqref{201030e5_10} is used.
To bound the last term, we use the definition of $\calT(B')$ and obtain
\begin{equation}\label{201031e5_24}
K^{d} \max_{W\in \calT(B')} \norm{f_{W}}_{L^p(B')} \le K^{2d} \Big( \sum_{\substack{\Set{W_1, \dots, W_M}\subseteq \calP(1/K)\\ \theta-\text{uniform}}} \avprod_{j=1}^M\norm{f_{W_j}}^p_{L^p(B')}\Big)^{1/p}.
\end{equation}
The above estimate seems rather crude, but we can allow any $K$-dependent constant in the estimate for this term.
We plug \eqref{201031e5_23} and \eqref{201031e5_24} in \eqref{201031e5_22}, integrate over the centers of balls $B'$, and obtain
\begin{equation}\label{201031e5_25}
\begin{split}
& \norm{\sum_{\Box\in \calP(\delta)} f_{\Box}}_{L^p(\R^{d+n})}\le C_{\bq, \epsilon, q, p} \sum_{j=0}^d K_j^{\Lambda+2\epsilon}\Big( \sum_{W\in \calP(1/K_j)} \norm{f_{W}}_{L^p(\R^{d+n})}^q\Big)^{1/q}\\
& + K^{2d} \sum_{\substack{\Set{W_1, \dots, W_M} \subseteq \calP(1/K)\\ \theta-\text{uniform}}} \Big( \sum_{B'\subset \R^{d+n}} \avprod_{j=1}^M\norm{f_{W_j}}^p_{L^p(B')}\Big)^{1/p}.
\end{split}
\end{equation}
Here we let $K_0:=K$.
The terms under the sum in the former term have the same form as that on the left hand side, and therefore are ready for an iteration argument. In other words, we will apply (rescaled versions of ) \eqref{201031e5_25} to each term $\norm{f_W}_{L^p(\R^{d+n})}$ under the sum in the former term.
By the definition of the multi-linear decoupling constant, the latter term can be controlled by
\begin{equation}\label{201031e5_26}
K^{2d} 2^{K^{d}} \MulDec_{q, p}(\bq, \delta, \theta, K) \Big( \sum_{\Box\in \calP(\delta)} \norm{f_{\Box}}_{L^p(\R^{d+n})}^q\Big)^{1/q},
\end{equation}
where we used that there are only $2^{K^{d}}$ subsets of $\calP(1/K)$, and hence at most that many $\theta$-uniform subsets.
We plug \eqref{201031e5_26} in \eqref{201031e5_25}.
Now it is standard argument to iterate \eqref{201031e5_25} and obtain the desired estimate in the proposition.
We leave out the details and refer to \cite[Section 8]{MR3614930} or \cite[Proposition 8.4]{MR3614930}.
\end{proof}

Recall that we have assumed \eqref{eq:Gamma>Lambda}.
For most of Section~\ref{sec:induction-on-scales}, we fix some $0<\epsilon<\Gamma-\Lambda$, a transversality parameter $\theta>0$, and a corresponding $K$ as in Proposition~\ref{prop:Bourgain-Guth}.

The mutlilinear decoupling constant will be estimated by the same procedure as in \cite{MR3374964}.
For a detailed exposition of this argument we refer to \cite[Theorem 10.16]{MR3592159} or \cite[Section 2.6]{MR4143735}.
We use a compressed version of this argument, in which each step is expressed as an inequality between the quantities \eqref{eq:tilde-a} below.
This version of the Bourgain--Demeter argument was originally motivated by decoupling for higher degree polynomials, see \cite{MR4031117}.

For a $\theta$-uniform set $\Set{ W_{j} }_{j=1}^{M} \subset \Part{1/K}$ and a choice of functions $f_{\Box}$, $\Box \in \Part{\delta}$, we write
\begin{align*}
\tA_{2}(b)
&:=
\norm[\Big]{\avprod_{j=1}^{M} \Big( \sum_{J \in \Part[W_{j}]{\delta^{b}}} \norm{ f_{J} }^2_{\avL^{2}(w_{B(x,\delta^{-2b})})}\Big)^{1/2}}_{L^{p}_{x \in \R^{d+n}}},
\\
\tA_{t}(b)
&:=
\norm[\Big]{ \avprod_{j=1}^{M} \Big(\sum_{J \in \Part[W_{j}]{\delta^{b}}}\norm{ f_{J} }^{\tilde{q}}_{\avL^{t}(w_{B(x,\delta^{-b})})}\Big)^{1/\tilde{q}}}_{L^{p}_{x \in \R^{d+n}}},
\\
\tA_{p}(b)
&:=
\norm[\Big]{ \avprod_{j=1}^{M} \Big(\sum_{J \in \Part[W_{j}]{\delta^{b}}} \norm{ f_{J} }^q_{\avL^{p}(w_{B(x,\delta^{-2b})})}\Big)^{1/q}}_{L^{p}_{x \in \R^{d+n}}},
\end{align*}
where $0<b\leq 1$ and
\begin{equation}
\label{eq:exponent-t}
\frac{1}{t} = \frac{1/2}{p} + \frac{1/2}{2},
\quad
\frac{1}{\tilde{q}} = \frac{1/2}{q} + \frac{1/2}{2}.
\end{equation}
Note that $2 \leq \tilde{q} \leq t \leq p$.
For
\begin{equation}
0<b<1 \text{ and } *=2,t,p,
\end{equation}
let $a_{*}(b)$ be the infimum over all exponents $a$ such that, for every $\theta$-uniform set $\Set{W_{j}}_{j=1}^{M} \subseteq \Part{1/K}$, every $\delta < 1/K$, and every choice of functions $f_{\Box}$, $\Box\in\Part{\delta}$, we have
\begin{equation}
\label{eq:a_*}
\tA_{*}(b) \lesssim_{a,\theta,K} \delta^{-a}
\avprod_{j=1}^{M} \Big( \sum_{\Box \in \Part[W_j]{\delta}} \norm{f_{\Box}}^q_{L^p(\R^{d+n})}\Big)^{1/q}
\end{equation}
with the implicit constant independent of the choice of the tuples $(W_{j})$ and $(f_{\Box})$, and in particular independent of $b$ as we will send $b\to 0$.
It follows from Hölder's inequality that this $a_{*}(b) < \infty$.
Recall that $\Gamma := \Gamma_{q,p}(\bq)$.
As in \cite[Section 3.6]{MR4031117}, we define
\begin{equation}
\label{eq:tilde-a}
a_{*} := \liminf_{b \to 0} \frac{\Gamma - a_{*}(b)}{b},
\quad
* \in \Set{2,t,p}.
\end{equation}
The next lemma says that $a_{*}$ is non-trivial.
\begin{lemma}\label{202201lem5_7}
Under the above notation, it holds that
\begin{equation}
\label{eq:1}
a_{*} < \infty,
\end{equation}
for $*=2,t,p$.
\end{lemma}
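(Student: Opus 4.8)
The plan is to establish the quantitative lower bound
\begin{equation}
\label{eq:a_*-lower}
a_{*}(b)\ \geq\ (1-b)\,\Gamma
\qquad\text{for every dyadic-compatible }b\in(0,1)\text{ and every }*\in\{2,t,p\},
\end{equation}
which immediately gives $a_{*}=\liminf_{b\to0}\frac{\Gamma-a_{*}(b)}{b}\leq\liminf_{b\to0}\frac{\Gamma-(1-b)\Gamma}{b}=\Gamma<\infty$. To prove \eqref{eq:a_*-lower} it suffices, for each fixed $a<(1-b)\Gamma$, to exhibit a $\theta$-uniform tuple $(W_{j})_{j=1}^{M}\subseteq\Part{1/K}$, a sequence of dyadic $\delta\to0$, and functions $f_{\Box}$ for which $\tA_{*}(b)\big/\bigl(\delta^{-a}\avprod_{j}(\sum_{\Box\in\Part[W_j]{\delta}}\|f_{\Box}\|_{L^{p}}^{q})^{1/q}\bigr)\to\infty$; this contradicts the definition of $a_{*}(b)$ as an infimum. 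For the tuple I take $\calW:=\Part{1/K}$ with $M=K^{d}$: since a real algebraic variety of degree $\leq d$ meets $O_{d}(K^{d-1})$ of the cubes of $\Part{1/K}$, this tuple is $\theta$-uniform once $K$ is large, which we may assume of the $K$ fixed after Proposition~\ref{prop:Bourgain-Guth}.

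The construction transplants a near-extremizer of the decoupling inequality. Fix $b$, $*$ and $\epsilon>0$. Since by \eqref{bestconstant} the exponent $\Gamma=\Gamma_{q,p}(\bq)$ is the \emph{smallest} number with $\Dec_{q,p}(\bq,\delta)\lesssim_{\epsilon'}\delta^{-\Gamma-\epsilon'}$ for all $\epsilon'$, the bound $\Dec_{q,p}(\bq,\sigma)=O(\sigma^{-(\Gamma-\epsilon)})$ must fail; hence there is a sequence of dyadic $\sigma\to0$ with $\Dec_{q,p}(\bq,\sigma)\geq\sigma^{-(\Gamma-\epsilon)}$, and so functions $g_{\Box}$, $\Box\in\Part{\sigma}$, with $\supp\widehat{g_{\Box}}\subseteq\calU_{\Box}(\bq)$ and $\|\sum_{\Box}g_{\Box}\|_{L^{p}(\R^{d+n})}\geq\tfrac12\sigma^{-(\Gamma-\epsilon)}(\sum_{\Box}\|g_{\Box}\|_{L^{p}}^{q})^{1/q}$. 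Put $\delta:=\sigma^{1/(1-b)}$ (choosing $b$, along this subsequence of $\sigma$'s, so that $\delta$ and $\delta^{b}$ are dyadic; $O(1)$ rounding losses are harmless), so that $\sigma=\delta^{1-b}$ and $\delta,\delta^{b}<1/K$ for $\delta$ small. For each $W_{j}$ pick one subcube $J_{j}\in\Part[W_j]{\delta^{b}}$; by affine rescaling (Lemma~\ref{lem:affine-scaling}), using that a quadratic form is invariant under the rescaling taking $J_{j}$ at scale $\delta^{b}$ to $[0,1]^{d}$ at scale $\delta$, transplant $(g_{\Box})$ into functions $(f_{\Box})_{\Box\in\Part[J_j]{\delta}}$ with $\supp\widehat{f_{\Box}}\subseteq\calU_{\Box}$ and
\[
\bigl\|\textstyle\sum_{\Box\subseteq J_{j}}f_{\Box}\bigr\|_{L^{p}(\R^{d+n})}
\gtrsim \sigma^{-(\Gamma-\epsilon)}\bigl(\textstyle\sum_{\Box\subseteq J_{j}}\|f_{\Box}\|_{L^{p}}^{q}\bigr)^{1/q}
= \delta^{-(1-b)(\Gamma-\epsilon)}\bigl(\textstyle\sum_{\Box\subseteq J_{j}}\|f_{\Box}\|_{L^{p}}^{q}\bigr)^{1/q}.
\]
Set $f_{\Box}:=0$ for $\Box$ not contained in any $J_{j}$, take the same transplant in every $W_{j}$, and translate each $\sum_{\Box\subseteq J_{j}}f_{\Box}$ in physical space (which does not affect Fourier supports) so that its $L^{p}$-bulk is centered at the origin.

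For $*=p$ one computes $\tA_{p}(b)$ using the elementary Fubini identity $\bigl\|\,\|h\|_{\avL^{p}(w_{B(\cdot,r)})}\bigr\|_{L^{p}}\sim\|h\|_{L^{p}}$ together with an overlap estimate: the envelopes $x\mapsto\|\sum_{\Box\subseteq J_j}f_{\Box}\|_{\avL^{p}(w_{B(x,\delta^{-2b})})}$, which by the previous paragraph differ from one another only by the uniformly bounded shears coming from $\nabla\bq$ and by our chosen translations, share a common core near the origin on which their geometric mean is comparable to the geometric mean of their heights. This yields $\tA_{p}(b)\gtrsim\|\sum_{\Box\subseteq J_{1}}f_{\Box}\|_{L^{p}}\gtrsim\delta^{-(1-b)(\Gamma-\epsilon)}\avprod_{j}(\sum_{\Box\in\Part[W_j]{\delta}}\|f_{\Box}\|_{L^{p}}^{q})^{1/q}$; letting $\epsilon\to0$ gives $a_{p}(b)\geq(1-b)\Gamma$. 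For $*=2$ and $*=t$ the same scheme applies with an explicit choice of $f_{\Box}$ in place of the extremizer, namely $L^{p}$-normalized wave packets with $\widehat{f_{\Box}}$ supported in pairwise disjoint subboxes of the $\calU_{\Box}$ and with all $f_{\Box}$ peaking at the origin; a direct wave-packet computation (using $\|\sum_{\Box\subseteq J_{j}}f_{\Box}\|_{L^{2}}^{2}=\sum_{\Box\subseteq J_{j}}\|f_{\Box}\|_{L^{2}}^{2}$, and keeping track of the $\ell^{\tilde q}$ exponent and of the ball radii $\delta^{-b},\delta^{-2b}$ in the definitions of $\tA_{2},\tA_{t}$) produces lower bounds $\tA_{2}(b),\tA_{t}(b)\gtrsim\delta^{-(1-b)A_{*}}\cdot(\text{RHS})$ with $A_{*}\geq d(1-\tfrac1p-\tfrac1q)\geq\Gamma$, hence $a_{2}(b),a_{t}(b)\geq(1-b)\Gamma$.

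I expect the main obstacle to be the uncertainty-principle bookkeeping in the $*=p$ step: verifying that, after the physical translations, the envelopes of the transplanted extremizers — which are only comparable up to the bounded $\nabla\bq$-shears — genuinely possess a common core on which their geometric mean dominates the geometric mean of their peak heights, so that $\tA_{p}(b)$ is bounded below by $\|\sum_{\Box\subseteq J_1}f_{\Box}\|_{L^{p}}$ and not merely by something strictly smaller. (A convenient way to do this is to note that each envelope is, up to tails, a bounded linear image of one fixed bump supported on a ball of radius comparable to $\delta^{-2+O(b)}$, hence contains a ball of comparable radius about its center; aligning these centers makes the overlap manifest.) The corresponding bookkeeping for $\tA_{2}$ and $\tA_{t}$ is lengthier to write out because the ball radii $\delta^{-b}$, $\delta^{-2b}$ are smaller than the physical extent of $\sum_{\Box\subseteq J_j}f_{\Box}$, but the conclusion is robust there since any reasonable lower bound already exceeds $\delta^{-(1-b)\Gamma}\cdot(\text{RHS})$.
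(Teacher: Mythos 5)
Your route is genuinely different from the paper's, but it has a gap at exactly the step you flag, and I do not see how to close it. To get $a_{p}(b)\ge(1-b)\Gamma$ you must bound $\tA_{p}(b)$ from below by $\avprod_{j}\norm{\sum_{\Box\subseteq J_{j}}f_{\Box}}_{L^{p}}$, which requires all $M$ envelopes $x\mapsto\norm{f_{J_{j}}}_{\avL^{p}(w_{B(x,\delta^{-2b})})}$ to be \emph{simultaneously} comparable to their global $L^{p}$ norms on a set of substantial measure. For an arbitrary near-extremizer of $\Dec_{q,p}(\bq,\sigma)$ you have no information about the spatial distribution of its mass: it may be spread over the whole ball of radius $\sigma^{-2}$ (i.e.\ $\delta^{-2+2b}$ after transplanting), concentrated on a sparse or lower-dimensional set, or split into far-apart pieces. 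Your ``common core'' claim --- that each envelope is, up to tails, a bounded image of one fixed bump on a ball of radius $\delta^{-2+O(b)}$ and hence contains a comparable ball about its center --- is not a consequence of anything available; the uncertainty principle only gives local constancy at the much smaller scale of the averaging ball, not global quasi-flatness of the profile at scale $\delta^{-2}$. Without that, at any fixed $x$ the geometric mean of the local norms can be far smaller than the geometric mean of the global norms, and no amount of translating the $M$ copies fixes this, so the claimed lower bound on $a_{p}(b)$ does not follow.

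The fallback for $*=2,t$ has a different defect: explicit wave packets can only certify the \emph{explicit candidate} exponents. The configuration you describe (one $L^{p}$-normalized packet per $\Box$, all peaking at the origin) yields, after the computation, an exponent of the form $d(1-\tfrac1p-\tfrac1q)-\tfrac{2n}{p}$ (other profiles give other candidates as in Proposition~\ref{prop:skew-lower-bd}), not $d(1-\tfrac1p-\tfrac1q)$, so the asserted bound $A_{*}\ge d(1-\tfrac1p-\tfrac1q)\ge\Gamma$ is not what the computation produces. More fundamentally, at this point of the induction $\Gamma$ is only known to be the true decoupling exponent with $\Gamma>\Lambda$; it is not yet known to equal the explicit maximum in \eqref{eq:dec-exponent-unrolled}, so no explicit example can be guaranteed to reach $\Gamma-O(b)$ --- ruling out a larger $\Gamma$ is precisely the upper bound being proved, which makes this branch circular unless you revert to abstract near-extremizers and hit the first gap again. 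The paper avoids extremizers entirely: by H\"older and Bernstein the left-hand side of \eqref{eq:multilin-Dec} is bounded by $\delta^{-Cb}\tA_{*}(b)$, hence $\MulDec_{q,p}(\bq,\delta,\theta,K)\lesssim_{\epsilon,\theta,K}\delta^{-(Cb+a_{*}(b)+\epsilon)}$, and feeding this into Proposition~\ref{prop:Bourgain-Guth} together with the standing assumption \eqref{eq:Gamma>Lambda} forces $\Gamma\le Cb+a_{*}(b)$, i.e.\ $a_{*}\le C<\infty$, with no structural knowledge of extremizers needed.
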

\begin{proof}[Proof of Lemma~\ref{202201lem5_7}]
By H\"older's inequality and Bernstein's inequality, the left-hand side of \eqref{eq:multilin-Dec} is bounded by
\begin{equation}\label{201101e5_31}
\delta^{-Cb} \tA_{*}(b)
\end{equation}
for any $*\in\Set{2,t,p}$ and any $0<b<1$ with some constant $C$ depending on $*$.
Therefore, we obtain that
\begin{equation}
\MulDec_{q,p}(\bq, \delta, \theta, K) \lesssim_{\epsilon, \theta, K} \delta^{-\big(Cb+a_{*}(b)+\epsilon\big)},
\end{equation}
for every $\epsilon>0$ and $1>b>0$.
This, together with Proposition~\ref{prop:Bourgain-Guth} and the assumption \eqref{eq:Gamma>Lambda}, implies that
\begin{equation}
\Gamma\le Cb+a_{*}(b).
\end{equation}
This finishes the proof of the lemma.
\end{proof}

\subsection{Using linear decoupling}
By H\"older's inequality, we obtain
\begin{equation}
\tA_{p}(b) \leq
\avprod_{j=1}^{M} \norm[\Big]{ \Big(\sum_{J \in \Part[W_{j}]{\delta^{b}}} \norm{ f_{J} }^q_{\avL^{p}(w_{B(x,\delta^{-2b})})}\Big)^{1/q}}_{L^{p}_{x \in \R^{d+n}}}.
\end{equation}
By Minkowski's inequality, this is further bounded by
\begin{equation}
\begin{split}
& \avprod_{j=1}^{M} \Big(\sum_{J \in \Part[W_{j}]{\delta^{b}}} \norm[\Big]{ \norm{ f_{J} }_{\avL^{p}(w_{B(x,\delta^{-2b})})}}_{L^{p}_{x \in \R^{d+n}}}^q \Big)^{1/q}\\
& \lesssim \avprod_{j=1}^{M} \Big(\sum_{J \in \Part[W_{j}]{\delta^{b}}} \norm{ f_{J} }_{L^{p}(\R^{d+n})}^q \Big)^{1/q}.
\end{split}
\end{equation}
By the definition of the decoupling exponent and affine scaling (Lemma~\ref{lem:affine-scaling}), this is
\begin{equation}
\lesssim_{\epsilon} \delta^{-(\Gamma+\epsilon)(1-b)} \avprod_{j=1}^M \Big( \sum_{\Box\in \calP(W_j, \delta)} \norm{f_{\Box}}_{L^p}^q\Big)^{1/q}.
\end{equation}
Hence
\[
a_{p}(b) \leq (\Gamma+\epsilon)(1-b),
\]
for every $\epsilon>0$, which means $a_p(b)\le \Gamma(1-b)$.
It follows that
\begin{equation}
\label{eq:2}
a_{p} \geq \Gamma.
\end{equation}

\subsection{Using \texorpdfstring{$L^{2}$}{L2} orthogonality}
By $L^{2}$ orthogonality, see e.g.\ \cite[Appenidx B]{MR4031117} for details, we have
\begin{equation}
\begin{split}
\tA_2(b)& =\norm[\Big]{\avprod_{j=1}^{M} \Big( \sum_{J \in \Part[W_{j}]{\delta^{b}}} \norm{ f_{J} }^2_{\avL^{2}(w_{B(x,\delta^{-2b})})}\Big)^{1/2}}_{L^{p}_{x \in \R^{d+n}}}\\
& \lesssim \norm[\Big]{\avprod_{j=1}^{M} \Big( \sum_{J \in \Part[W_{j}]{\delta^{2b}}} \norm{ f_{J} }^2_{\avL^{2}(w_{B(x,\delta^{-2b})})}\Big)^{1/2}}_{L^{p}_{x \in \R^{d+n}}}.
\end{split}
\end{equation}
We further apply H\"older's inequality and obtain
\begin{equation}
\lesssim \delta^{-d\cdot 2b(1/2-1/\tilde{q})} \norm[\Big]{\avprod_{j=1}^{M} \Big( \sum_{J \in \Part[W_{j}]{\delta^{2b}}} \norm{ f_{J} }^{\tilde{q}}_{\avL^{t}(w_{B(x,\delta^{-2b})})}\Big)^{1/\tilde{q}}}_{L^{p}_{x \in \R^{d+n}}}
\end{equation}
Note that the last expression is exactly $\delta^{-d b(1-2/\tilde{q})} \tA_{t}(2b)$.
Hence
\[
a_{2}(b) \leq d b(1-2/\tilde{q}) + a_{t}(2b).
\]
It follows that
\begin{equation}
\label{eq:4}
a_{2} \geq - d(1-2/\tilde{q}) + 2 a_{t}.
\end{equation}

\subsection{Ball inflation}
Using Corollary~\ref{cor:ball-inflation:lqLt} with $\rho=\delta^b$ and taking $L^p$ norms in $x_0$ on both sides of \eqref{eq:ball-inflation:lqLt}, we obtain
\begin{equation}\label{201031e5_39}
\begin{split}
\tA_{t}(b)& =\norm[\Big]{ \avprod_{j=1}^{M} \Big(\sum_{J \in \Part[W_{j}]{\delta^{b}}}\norm{ f_{J} }^{\tilde{q}}_{\avL^{t}(w_{B(x,\delta^{-b})})}\Big)^{1/\tilde{q}}}_{L^{p}_{x \in \R^{d+n}}}\\
& \lesssim_{\epsilon}
\delta^{-b(\gamma+\epsilon)} \norm[\Big]{ \avprod_{j=1}^{M} \Big(\sum_{J \in \Part[W_{j}]{\delta^{b}}}\norm{ f_{J} }^{\tilde{q}}_{\avL^{t}(w_{B(x,\delta^{-2b})})}\Big)^{1/\tilde{q}}}_{L^{p}_{x \in \R^{d+n}}},
\end{split}
\end{equation}
for every $\epsilon>0$, where
\begin{equation}
\begin{split}
\gamma&:=\frac{d}{t} - \frac{d+n}{p} + \frac{\kappa((1-\theta)p/t)}{p}
\\ &\le
\frac{d}{t} - \frac{d+n}{p} + \frac{1}{p} \sup_{0\leq n'\leq n} \bigl( n' + (1 - \frac{p}{t} (1-\theta)) \numvar_{n'}(\bq) \bigr),
\end{split}
\end{equation}
and the log factors in Corollary~\ref{cor:ball-inflation:lqLt} have been absorbed by $\delta^{-b\epsilon}$.
In the last step we used Corollary~\ref{cor:kappa-bound}.
In the end, we apply H\"older's inequality to the last term in \eqref{201031e5_39} and obtain
\begin{equation}
\tA_t(b)\lesssim \delta^{-b(\gamma+\epsilon)} \tA_{2}(b)^{1/2} \tA_{p}(b)^{1/2}.
\end{equation}
It follows that
\[
a_{t}(b) \leq
b\gamma + a_{p}(b)/2 + a_{2}(b)/2.
\]
Substituting this inequality into the definition \eqref{eq:tilde-a}, we obtain
\begin{equation}
\label{eq:5}
a_{t} \geq
-\gamma + a_{p}/2 + a_{2}/2.
\end{equation}

\subsection{Proof of Theorem~\ref{thm:main-inductive-step}}
Inequalities \eqref{eq:exponent-t}, \eqref{eq:2}, \eqref{eq:4}, \eqref{eq:5} imply
\[
\Gamma
\leq
a_{p}
\leq
2\gamma - a_{2} + 2a_{t}
\leq
2\gamma + d(1-2/\tilde{q})
=
2\gamma + d(1/2-1/q).
\]
Inserting the definitions of the respective terms into this inequality, we obtain
\[
\Gamma_{q,p}(\bq)
\leq
2 \Bigl( \frac{d}{t} - \frac{d+n}{p} + \frac{1}{p} \sup_{0\leq n'\leq n} \bigl( n' + (1 - \frac{p}{t} (1-\theta)) \numvar_{n'}(\bq) \bigr) \Bigr)
+ d \bigl( \frac{1}{2} - \frac{1}{q} \bigr).
\]
Both sides of this inequality depend continuously on $\theta$, and we consider its limit when $\theta\to 0$.
This gives
\[
\Gamma_{q,p}(\bq)
\leq
2 \Bigl( \frac{d}{t} - \frac{d+n}{p} + \sup_{0\leq n'\leq n} \bigl( \frac{n'}{p} + (\frac{1}{p} - \frac{1}{t} ) \numvar_{n'}(\bq) \bigr) \Bigr)
+ d \bigl( \frac{1}{2} - \frac{1}{q} \bigr).
\]
Substituting the ansatz \eqref{eq:exponent-t} for $t$, we obtain
\begin{align*}
\Gamma_{q,p}(\bq)
&\leq
d(\frac{1}{p}+\frac{1}{2}) - 2\frac{d+n}{p} + \sup_{0\leq n'\leq n} \bigl( \frac{2n'}{p} + (\frac{2}{p} - \frac{1}{p} - \frac{1}{2} ) \numvar_{n'}(\bq) \bigr)
+ d \bigl( \frac{1}{2} - \frac{1}{q} \bigr)
\\ &=
\sup_{0 \leq n'\leq n} \bigl( \bigl( \frac{1}{2}-\frac{1}{p} \bigr) (d-\numvar_{n'}) - \frac{2(n - n')}{p} \bigr) + d \bigl( \frac{1}{2}-\frac{1}{q} \bigr).
\end{align*}
This finishes the proof of Theorem~\ref{thm:main-inductive-step}.

\section{Lower bounds in Theorem~\ref{thm:main}}
\label{sec:lower-bd}

In this section, we show the lower bounds for $\ell^q L^p$ decoupling constants in Theorem~\ref{thm:main} for $q\le p$.
We will prove that
\begin{equation}
\label{eq:dec-exponent}
\Gamma_{q,p}(\bq) \ge
\max \Bigl( \sup_{H}\Gamma_{q,p}(\bq|_H) , \max_{0\leq n'\leq n} \Big( d\bigl(1-\frac{1}{p}-\frac{1}{q} \bigr) - \numvar_{n'}(\bq)\bigl(\frac{1}{2}-\frac{1}{p}\bigr) - \frac{2(n-n')}{p}\Big) \Bigr),
\end{equation}
where $H$ is a hyperplane passing through the origin, for every $p\ge 2, q\ge 2$.
Note that here we do not necessarily require $q\le p$.
One can apply the above inequality repeatedly and then obtain the lower bounds in Theorem~\ref{thm:main} for $q \leq p$.

First of all, we relate the decoupling exponent with the decoupling exponents on subspaces.
Here, the distinction between the cases $q\leq p$ and $q>p$ becomes apparent.
\begin{lemma}
\label{lem:subspace-lower-bd}
Let $\bq$ be an $n$-tuple of quadratic forms in $d$ variables and $H \subseteq \R^{d}$ a linear subspace of dimension $d'$.
Then, for any $2 \leq q \leq p < \infty$, we have
\begin{equation}
\label{eq:subspace-lower-bd:q<p}
\Gamma_{q,p}(\bq) \geq \Gamma_{q,p}(\bq|_{H}),
\end{equation}
and, for any $2 \leq p < q \leq \infty$, we have
\begin{equation}
\label{eq:subspace-lower-bd:q>p}
\Gamma_{q,p}(\bq) \geq \Gamma_{q,p}(\bq|_{H}) + (d-d') \bigl( \frac{1}{p} - \frac{1}{q} \bigr).
\end{equation}
\end{lemma}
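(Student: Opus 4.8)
The plan is to transfer a near‑extremal example for $\bq|_H$ to one for $\bq$, by lifting it off the subspace $H$ and --- crucially, when $q>p$ --- replicating it across the $\delta^{-(d-d')}$ slabs in the directions normal to $H$. Since the decoupling constant $\Dec_{q,p}$ is invariant under rotations of the $\xi$-variables (which induce rotations of the $x$-variables and preserve the family of parallelepipeds $\calU_{\Box}$), I may assume $H=\{\xi\in\R^{d}:\xi_{d'+1}=\dots=\xi_{d}=0\}$, so that $\bq|_{H}(\xi')=\bq(\xi',0)$; accordingly write $\xi=(\xi',\xi'')$ and $x=(x',x'')$ with $\xi',x'\in\R^{d'}$ and $\xi'',x''\in\R^{d-d'}$. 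Fix a dyadic $\delta<1$. By the definition of $\Dec_{q,p}$, for each $\eta>0$ there are functions $G_{\Box'}$, indexed by the dyadic cubes $\Box'$ of side $\delta$ in $[0,1]^{d'}$, with $\supp\widehat{G_{\Box'}}\subseteq\calU_{\Box'}(\bq|_{H})\subseteq\R^{d'+n}$, finite $L^{p}$ norms, and $\bigl\|\sum_{\Box'}G_{\Box'}\bigr\|_{L^{p}(\R^{d'+n})}\ge(\Dec_{q,p}(\bq|_{H},\delta)-\eta)\bigl(\sum_{\Box'}\|G_{\Box'}\|_{L^{p}(\R^{d'+n})}^{q}\bigr)^{1/q}$. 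Fix also a Schwartz function $\tilde h$ on $\R^{d-d'}$ with $\supp\widehat{\tilde h}\subseteq[-\delta^{2},\delta^{2}]^{d-d'}$.

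For each dyadic cube $\Box''$ of side $\delta$ in $[0,1]^{d-d'}$, with centre $b_{\Box''}$, I will exhibit an $L^{p}(\R^{d+n})$-isometry $T_{\Box''}$, realized as a composition of modulations (multiplications by $e^{i v\cdot z}$, $v\in\R^{d+n}$) and of a shear $g\mapsto g(A\cdot)$ with $A$ unipotent, whose effect on the Fourier side is an affine, volume-preserving transformation that carries the frequency slab $\{(\xi',\xi'',\eta):(\xi',\eta)\in\calU_{\Box'}(\bq|_{H}),\ \xi''\in[-\delta^{2},\delta^{2}]^{d-d'}\}$ into $\calU_{\Box'\times\Box''}(\bq)$, for every $\Box'$. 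Granting this, I choose pairwise far-apart vectors $\tau_{\Box''}\in\R^{d-d'}$ and set $f_{\Box'\times\Box''}(x',x'',y):=\bigl(T_{\Box''}(G_{\Box'}\otimes\tilde h)\bigr)(x',x''-\tau_{\Box''},y)$; since $T_{\Box''}$ and the $x''$-translation move Fourier supports only by translation, $\supp\widehat{f_{\Box'\times\Box''}}\subseteq\calU_{\Box'\times\Box''}(\bq)$ as required. Taking the separations $|\tau_{\Box''}-\tau_{\Box'''}|$ to exceed any fixed power of $\delta^{-1}$ makes the functions $f_{\Box'\times\Box''}$, as $\Box''$ ranges over the partition of $[0,1]^{d-d'}$, disjointly supported in $x''$ up to rapidly decaying tails, which is harmless for what follows.

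The construction of $T_{\Box''}$ is the heart of the argument and the only place where the quadratic nature of $\bq$ enters. Writing $a=(a',b_{\Box''})$ for the centre of $\Box'\times\Box''$, one has the explicit formula $L_{\Box'\times\Box''}^{T}(u,v)=(\delta u,\ \delta\nabla\bq(a)^{T}u+\delta^{2}v)$, so $\calU_{\Box'\times\Box''}(\bq)$ consists of the points $(a+\delta u,\ \bq(a)+\delta\nabla\bq(a)^{T}u+\delta^{2}v)$ with $|u|,|v|\le C$. Since $\bq$ is quadratic, $\bq(\xi',b_{\Box''})$ differs from $\bq|_{H}(\xi')=\bq(\xi',0)$ only by terms of degree $\le1$ in $\xi'$ whose coefficients have size $O(1)$, and over the $\delta^{2}$-thin slab $\{|\xi''-b_{\Box''}|\le\delta^{2}\}$ both $\bq(\xi',\xi'')$ and $\nabla\bq(\xi',\xi'')$ agree with $\bq(\xi',b_{\Box''})$ and $\nabla\bq(\xi',b_{\Box''})$ up to errors $O(\delta^{2})$ and $O(\delta)$, respectively. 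Hence $T_{\Box''}$ can be built from: a modulation realizing the frequency translation $\xi''\mapsto\xi''+b_{\Box''}$; a shear of the form $(x',x'',y)\mapsto(x'+m_{\Box''}^{T}y,x'',y)$ realizing the tilt $\eta\mapsto\eta+\bigl(\partial_{\xi'}\bq(a',b_{\Box''})-\partial_{\xi'}\bq(a',0)\bigr)^{T}(\xi'-a')$, whose slope $m_{\Box''}$ is $O(1)$; and a modulation realizing the constant $\eta$-shift by $\bq(a',b_{\Box''})-\bq(a',0)$. After these operations the slab above is mapped into $\calU_{\Box'\times\Box''}(\bq)$ once the remaining $O(\delta^{2})$ discrepancies are absorbed by enlarging the constant $C$ in \eqref{eq:Uncertainty-box}, using the nestedness property \eqref{eq:Uncertainty-box-nested}. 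This bookkeeping is routine but is the technical crux; everything else is soft.

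It remains to compute norms. As $T_{\Box''}$ is linear and isometric on $L^{p}(\R^{d+n})$ and neither $\tilde h$ nor $\tau_{\Box''}$ depends on $\Box'$, one gets $\bigl\|\sum_{\Box'}f_{\Box'\times\Box''}\bigr\|_{L^{p}}=\bigl\|\bigl(\sum_{\Box'}G_{\Box'}\bigr)\otimes\tilde h\bigr\|_{L^{p}}=\bigl\|\sum_{\Box'}G_{\Box'}\bigr\|_{L^{p}(\R^{d'+n})}\|\tilde h\|_{L^{p}(\R^{d-d'})}$ and $\|f_{\Box'\times\Box''}\|_{L^{p}}=\|G_{\Box'}\|_{L^{p}(\R^{d'+n})}\|\tilde h\|_{L^{p}(\R^{d-d'})}$, both independent of $\Box''$. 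For $q>p$, using all $\delta^{-(d-d')}$ cubes $\Box''$ and the (essential) $x''$-disjointness gives $\bigl\|\sum_{\Box',\Box''}f_{\Box'\times\Box''}\bigr\|_{L^{p}}\gtrsim(\delta^{-(d-d')})^{1/p}\bigl\|\sum_{\Box'}G_{\Box'}\bigr\|_{L^{p}}\|\tilde h\|_{L^{p}}$, whereas $\bigl(\sum_{\Box',\Box''}\|f_{\Box'\times\Box''}\|_{L^{p}}^{q}\bigr)^{1/q}=(\delta^{-(d-d')})^{1/q}\|\tilde h\|_{L^{p}}\bigl(\sum_{\Box'}\|G_{\Box'}\|_{L^{p}}^{q}\bigr)^{1/q}$; comparing the two and letting $\eta\to0$ yields $\Dec_{q,p}(\bq,\delta)\gtrsim\delta^{-(d-d')(1/p-1/q)}\Dec_{q,p}(\bq|_{H},\delta)$ with a constant uniform in the dyadic $\delta$, which gives \eqref{eq:subspace-lower-bd:q>p} via the definition \eqref{bestconstant} of $\Gamma_{q,p}$. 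For $q\le p$, one instead keeps only a single cube (say $f_{\Box'\times\Box''}=0$ unless $\Box''=[0,\delta)^{d-d'}$), so that the factors $\delta^{-(d-d')}$ disappear and one obtains directly $\Dec_{q,p}(\bq,\delta)\ge\Dec_{q,p}(\bq|_{H},\delta)$, hence \eqref{eq:subspace-lower-bd:q<p}. The main obstacle throughout is the construction of the transformations $T_{\Box''}$ and the verification that they respect the frequency parallelepipeds $\calU_{\Box'\times\Box''}(\bq)$; the norm estimates are then mechanical.
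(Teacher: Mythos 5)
Your proposal is correct and follows essentially the same route as the paper: tensor near-extremizers for $\bq|_{H}$ with a bump whose Fourier support is a $\delta^{2}$-ball in the normal frequency directions, keep a single cube $\Box''$ when $q\leq p$, and when $q>p$ replicate over all $\Box''$ via Fourier-side affine maps (your modulations and shears $T_{\Box''}$ are exactly the paper's operators $A_{\Box''}$) together with far-apart spatial translations to get essential disjointness, then compare the resulting $\ell^{p}$ aggregation on the left with the $\ell^{q}$ sum on the right. The only differences are cosmetic bookkeeping (e.g.\ translating only in $x''$, and absorbing the $O(\delta^{2})$ errors by enlarging the constant in $\calU_{\Box}$, which the paper handles by allowing $\supp\widehat{f_{\Box}}\subseteq C\,\calU_{\Box}$ and writing $\Dec_{q,p}(\bq,C\delta)$).
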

\begin{proof}[Proof of Lemma~\ref{lem:subspace-lower-bd}]
For notational convenience, assume that $\R^{d} = H \times \R^{d''}$ with $d''=d-d'$.
The bound \eqref{eq:subspace-lower-bd:q<p} will follow from
\begin{equation}
\label{eq:subspace-lower-bd-delta:q<p}
\Dec_{q,p}(\bq,C\delta) \gtrsim \Dec_{q,p}(\bq|_{H},\delta),
\end{equation}
for some absolute constant $C$.
To see this, let $\Set{ \tilde{f}_{\Box'} \given \Box' \in \Part[{[0,1]^{d'}}]{\delta} }$, be a tuple of functions on $\R^{d' + n}$ that nearly extremizes the inequality \eqref{equ:decoupling_definition} for $\bq|_{H}$.
Fix a bump function $\phi$ such that $\supp \widehat{\phi} \subseteq B(0,\delta^{2}) \subset \R^{d''}$ and, for
\[
\Box = \Box' \times \Box'' \in \Part[{[0,1]^{d}}]{\delta} = \Part[{[0,1]^{d'}}]{\delta} \times \Part[{[0,1]^{d''}}]{\delta},
\]
consider the functions
\[
f_{\Box} = f_{\Box' \times \Box''}
=
\begin{cases}
\tilde{f}_{\Box'} \otimes \phi, & \Box'' = \Box''_{0} := [0,\delta]^{d''},\\
0, & \Box'' \neq \Box''_{0}.
\end{cases}
\]
Then $\supp \widehat{f_{\Box}} \subseteq C \calU_{\Box}$ and
\[
\norm{\sum_{\Box} f_{\Box}}_{p} = \norm{\phi}_{p} \norm{\sum_{\Box'} f_{\Box'}}_{p},
\quad
\norm{f_{\Box' \times \Box''}}_{p} = \one_{\Box'' = \Box''_{0}} \norm{\phi}_{p} \norm{f_{\Box'}}_{p},
\]
which implies \eqref{eq:subspace-lower-bd-delta:q<p}.
Here $\one$ denotes an indicator function, that takes the value $1$ if the statement in the subscript is true, and $0$ otherwise.

To see \eqref{eq:subspace-lower-bd:q>p}, we define $f_{\Box' \times \Box''_{0}}$, as above.
For other $\Box'' \in \Part[{[0,1]^{d''}}]{\delta}$, let $a''\in\R^{d''}$ be the center of $\Box''$ and define
\[
f_{\Box' \times \Box''} := A_{\Box''} f_{\Box' \times \Box''_{0}}(\cdot + c_{\Box''}),
\]
where $c_{\Box''} \in \R^{d+n}$ are very large vectors and the linear operators $A_{\Box''}$ are given by affine transformations in the Fourier space:
\[
\widehat{A_{\Box''} f}(\xi,\eta) := \widehat{f}(\xi-(0,a''),\eta+Q(0,a'')-\nabla \bq(0,a'') \cdot \xi).
\]
If $c_{\Box''}$ are sufficiently far apart, then functions $f_{\Box' \times \Box''}$ and $f_{\tilde\Box' \times \tilde\Box''}$ are almost disjointly supported for $\Box'' \neq \tilde\Box''$, so that
\begin{align*}
\norm{\sum_{\Box} f_{\Box}}_{p}
&\sim
\bigl( \sum_{\Box''} \norm{\sum_{\Box'} f_{\Box' \times \Box''}}_{p}^{p} \bigr)^{1/p}
\\ &=
\norm{\phi}_{p} \bigl( \sum_{\Box''} \norm{\sum_{\Box'} \tilde{f}_{\Box'}}_{p}^{p} \bigr)^{1/p}=
\norm{\phi}_{p} \delta^{-d''/p} \norm{\sum_{\Box'} \tilde{f}_{\Box'}}_{p}
\end{align*}
and
\begin{align*}
\bigl( \sum_{\Box} \norm{f_{\Box}}_{p}^{q} \bigr)^{1/q}
&=
\bigl( \sum_{\Box''} \sum_{\Box'} \norm{f_{\Box' \times \Box''}}_{p}^{q} \bigr)^{1/q}
\\ &=
\norm{\phi}_{p} \bigl( \sum_{\Box''} \sum_{\Box'} \norm{\tilde{f}_{\Box'}}_{p}^{q} \bigr)^{1/q}
=
\norm{\phi}_{p} \delta^{-d''/q} \bigl(\sum_{\Box'} \norm{\tilde{f}_{\Box'}}_{p}^{q} \bigr)^{1/q}.
\end{align*}
This implies
\[
\Dec_{q,p}(\bq,C\delta) \gtrsim \delta^{-d''(1/p-1/q)} \Dec_{q,p}(\bq|_{H},\delta),
\]
and therefore \eqref{eq:subspace-lower-bd:q>p}.
\end{proof}

To show the lower bound in \eqref{eq:dec-exponent}, it remains to prove

\begin{proposition}
\label{prop:skew-lower-bd}
Let $\bq$ be an $n$-tuple of quadratic forms in $d$ variables.
For $0\leq n'\leq n$ and $2 \leq q, p \leq \infty$, we have
\begin{equation}
\label{eq:skew-lower-bd}
\Gamma_{q,p}(\bq) \geq
d \bigl(1-\frac{1}{p}-\frac{1}{q} \bigr) - \numvar_{n'}(\bq)\bigl(\frac{1}{2}-\frac{1}{p}\bigr) - \frac{2(n-n')}{p}.
\end{equation}
\end{proposition}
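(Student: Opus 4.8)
\noindent\textbf{Plan of proof of Proposition~\ref{prop:skew-lower-bd}.}
The idea is to test the definition of $\Dec_{q,p}(\bq,\delta)$ against an explicit family of modulated bumps and to reduce \eqref{eq:skew-lower-bd} to a lower bound for an exponential sum mean value controlled by $\numvar_{n'}(\bq)$. First I normalise $\bq$: a bad family of functions for a tuple $N(\bq\circ M)$ with $M\in\mathrm{GL}_d(\R)$ and $N\in\mathrm{GL}_n(\R)$ pulls back, via the corresponding linear change of coordinates on $\R^{d+n}$ together with Lemma~\ref{lem:affine-scaling} to repair the cube structure on the base, to a bad family for $\bq$, at the cost of a constant depending on $\bq$. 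Hence, by the definition \eqref{eq:numvar} of $\numvar_{n'}(\bq)$, I may assume $\bq=(\bar\bq(\xi'),\tilde\bq(\xi))$ with $\xi=(\xi',\xi'')\in\R^L\times\R^{d-L}$ and $L:=\numvar_{n'}(\bq)$, where $\bar\bq=(Q_1,\dots,Q_{n'})$ depends only on $\xi'$ and $\tilde\bq=(Q_{n'+1},\dots,Q_n)$; minimality of $L$ forces $\bar\bq$ to depend on all $L$ of its variables. After a harmless perturbation I may also assume $\bq$ has integer coefficients.

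Fix $\delta=1/W$ with $W\to\infty$. For $\Box\in\Part{\delta}$ with centre $a_\Box$, so that $(a_\Box,\bq(a_\Box))$ is the centre of $\calU_\Box$, set $f_\Box(x,y):=e^{i(x,y)\cdot(a_\Box,\bq(a_\Box))}\eta(x,y)$, where $\eta=|\phi|^2\ge0$ is a fixed bump with $\eta\gtrsim1$ on $B:=B(0,C_0\delta^{-2})$ and $\widehat\eta$ supported in a ball of radius $c\delta^2$ small enough that $\widehat{f_\Box}\subseteq\calU_\Box$ (possible since $\calU_\Box$ contains a ball of radius $\sim\delta^2$ about its centre). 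Then $(\sum_\Box\|f_\Box\|_p^q)^{1/q}\sim\delta^{-d/q}\|\eta\|_p\sim\delta^{-d/q}\delta^{-2(d+n)/p}$, while
\[
\big\|\textstyle\sum_\Box f_\Box\big\|_p\ \ge\ \|\eta\cdot K_\bq\|_{L^p(B)},\qquad K_\bq(x,y):=\sum_{\Box\in\Part{\delta}}e^{i(x,y)\cdot(a_\Box,\bq(a_\Box))}.
\]
Rescaling $x=u/\delta$, $y=v/\delta^2$, writing $a_\Box=\delta k$ so that $\bq(a_\Box)=\delta^2\bq(k)$, absorbing the resulting affine shear in $(u,v)$, and using $2\pi$-periodicity of the exponential in $u$ and $v$, one gets $\|\eta\,K_\bq\|_{L^p(B)}^p\gtrsim\delta^{-2(d+n)}I_p(W)$ with
\[
I_p(W):=\int_{[0,2\pi]^{d+n}}\Big|\sum_{k\in[0,W)^d\cap\Z^d}e^{i(u\cdot k+v\cdot\bq(k))}\Big|^p\,du\,dv .
\]
Combining these, \eqref{eq:skew-lower-bd} reduces to showing $I_p(W)\gtrsim W^{\,d(p-1)-L(p/2-1)-2(n-n')}$.

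For $p=2s$ an even integer, $I_{2s}(W)$ counts integral solutions in $[0,W)^d$ of $\sum_{i\le s}k_i=\sum_{i>s}k_i$ and $\sum_{i\le s}\bq(k_i)=\sum_{i>s}\bq(k_i)$; using the normal form I fix the first $L$ coordinates of the $k_i$ diagonally, $k'_{s+i}=k'_i$, which automatically solves the $n'$ equations from $\bar\bq$ and the first $L$ linear equations and leaves $W^{Ls}$ free choices of $(k'_1,\dots,k'_s)$, and for each such choice Cauchy--Schwarz — a moment count dominates the square of the number of tuples divided by the number $\lesssim W^{d-L}W^{2(n-n')}$ of moment values — supplies $\gtrsim W^{2s(d-L)}/(W^{d-L}W^{2(n-n')})$ solutions in the remaining coordinates, whence $I_{2s}(W)\gtrsim W^{\,Ls+(d-L)(2s-1)-2(n-n')}=W^{\,d(2s-1)-L(s-1)-2(n-n')}$. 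For general $p\ge2$ I instead induct on $L$ by approximate factorisation: restricting $(u,v)$ to the thin slab $\{|v''|\le cW^{-2}\}$, where $v''$ is dual to $\tilde\bq$, the phase $v''\cdot\tilde\bq(k)$ is $O(c)$ for every $k\in[0,W)^d$, so $K_\bq$ is well approximated by $K_{\bar\bq}(u',v')\cdot\sum_{k''}e^{iu''\cdot k''}$; carrying out the $u''$-integral ($\int_{[0,2\pi]^{d-L}}|\sum_{k''}e^{iu''\cdot k''}|^p\sim W^{(p-1)(d-L)}$) and the $v''$-integral (of measure $\sim W^{-2(n-n')}$) gives $I_p(W)\gtrsim W^{(d-L)(p-1)-2(n-n')}I_p^{\bar\bq}(W)$, and since $\bar\bq$ genuinely uses all $L$ of its variables the trivial estimate $I_p^{\bar\bq}(W)\ge\|K_{\bar\bq}\|_{L^2}^p=W^{Lp/2}$ (Hölder on $[0,2\pi]^{L+n'}$, plus orthogonality) closes the induction, the exponents matching exactly.

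The hard part will be the last step for non-even $p$: making the approximate factorisation of $K_\bq$ on $\{|v''|\le cW^{-2}\}$ rigorous. The crude bound for the error $e^{iv''\cdot\tilde\bq(k)}-1$, summed over the $W^d$ frequencies, is of the same order $W^d$ as the main term except very near the joint peak of $K_{\bar\bq}$ and $\sum_{k''}e^{iu''\cdot k''}$, so the cross terms between the $\bar\bq$-block and the $\tilde\bq$-block must be organised so that the error is absorbed only after integration rather than pointwise. For even $p$ the self-contained Cauchy--Schwarz count avoids this entirely, and it also makes transparent why different choices of $n'$ in \eqref{eq:skew-lower-bd} correspond to different diagonal-versus-low-complexity families of solutions of the underlying Diophantine system.
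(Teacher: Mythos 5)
Your reduction of the decoupling lower bound to the exponential-sum mean value $I_p(W)$ is set up correctly, and your even-exponent argument (diagonal choice in the first $L=\numvar_{n'}(\bq)$ coordinates plus the Cauchy--Schwarz divisor count $W^{2s(d-L)}/(W^{d-L}W^{2(n-n')})$) does produce the exponent $d(2s-1)-L(s-1)-2(n-n')$, so for $p=2s$ the numerology closes. But two points leave a genuine gap. First, and decisively, the proposition is needed for \emph{all} $2\le p\le\infty$ (the critical exponents in Theorem~\ref{thm:main} and Corollaries~\ref{optimal-l2-decoupling}--\ref{20201021.1.5.} are generically not even integers, and lower bounds do not interpolate), and your route to general $p$ --- the approximate factorisation of $K_{\bq}$ on the slab $\{|v''|\le cW^{-2}\}$ --- is exactly the step you yourself flag as unproven. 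It is not a technicality: as you note, the error $\sum_k e^{i(\cdot)}(e^{iv''\cdot\tilde\bq(k)}-1)$ is $O(cW^d)$ pointwise, while the main term $W^{d-L}K_{\bar\bq}$ is of that size only near the peak of $K_{\bar\bq}$; restricting to that peak yields only the exponent $d(p-1)-d-2n+d$, i.e.\ the $\numvar$-independent bound, not \eqref{eq:skew-lower-bd}. No organisation "after integration" is supplied, so for non-even $p$ the proof is incomplete. Second, the preliminary claim that one may "after a harmless perturbation assume $\bq$ has integer coefficients" is not justified: perturbing the forms changes the surface, lower bounds for $\Gamma_{q,p}$ do not transfer under perturbation (Theorem~\ref{thm:main-uniform} is one-sided), and $\numvar_{n'}$ can jump. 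This is repairable --- count approximate solutions at unit scale directly for real coefficients, where the Cauchy--Schwarz range bound $\lesssim W^{d-L}W^{2(n-n')}$ still holds --- but as written it is a flaw.

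For comparison, the paper's proof avoids the exponential-sum route entirely and thereby the factorisation problem: it takes tensor-product test functions $f_{\Box}=g_{\Box'}\otimes h_{\Box}$, where $\widehat{h_{\Box}}\ge 0$ has unit integral and slab-shaped support of dimensions $\delta^{d''}\times\delta^{2n''}$, so that on the spatial slice $|x''|,|y''|\le 10^{-d-n}$ the sum $\sum_{\Box''}h_{\Box'\times\Box''}$ is pointwise $\sim\delta^{-d''}$ with no cancellation; the remaining sum over $\Box'$ is then bounded below for every $p$ by H\"older against a cutoff followed by $L^2$-orthogonality of the $g_{\Box'}$. In other words, the fix for your "hard part" is to abandon the pure-frequency ansatz: replacing the $\Box''$-modulated exponentials by honest functions adapted to the slabs makes the constructive interference in the $\Box''$-block a pointwise statement rather than one that must be extracted after integration, and then your $L^2$/H\"older treatment of the $\bar\bq$-block (which parallels the paper's) works verbatim for all $2\le q,p\le\infty$.
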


\begin{proof}[Proof of Proposition~\ref{prop:skew-lower-bd}]
Let $d' = \numvar_{n'}(\bq)$.
After linear changes of variables, we may assume that $Q_{1},\dotsc,Q_{n'}$ depend only on $\xi_{1},\dotsc, \xi_{d'}$.
Write frequency points in $\R^{d+n}$ as
\begin{equation}
(\xi',\xi'',\eta',\eta'') \in \R^{d'+d''+n'+n''},
\end{equation}
with
\begin{equation}
\begin{split}
& \xi'=(\xi_1, \dots, \xi_{d'}), \ \xi''=(\xi_{d'+1}, \dots, \xi_{d}),\\
& \eta'=(\eta_1, \dots, \eta_{n'}), \ \eta''=(\eta_{n'+1}, \dots, \eta_{n}),
\end{split}
\end{equation}
and $d'+d''=d, n'+n''=n$.
Similarly, we write spatial points in $\R^{d+n}$ as
\begin{equation}
(x',x'',y',y'') \in \R^{d'+d''+n'+n''}.
\end{equation}
For a dyadic cube $\Box \in \Part{\delta}$, write $\Box = \Box' \times \Box''$ with $\Box'\subset \R^{d'}$ and $\Box''\subset \R^{d''}$.
Choose functions $f_{\Box}$ of the form
\begin{equation}
f_{\Box}(x', x'', y', y'') = g_{\Box'}(x',y')h_{\Box}(x'',y'')
\end{equation}
with the following properties\footnote{Here and below we use $\Box$ instead of $\Box''$ in $h_{\Box}$ as $Q_{n'+1}, \dots, Q_n$ still depend on $\xi'$.}
\begin{enumerate}
\item $\widehat{g_{\Box'}}$ and $\widehat{h_{\Box}}$ are positive smooth functions satisfying
\begin{equation}
\int \widehat{g_{\Box'}} = \int \widehat{h_{\Box}} = 1,
\end{equation}
\item $\widehat{g_{\Box'}}$ is supported on a ball of radius $\approx \delta^2$ contained in
\begin{equation}
\Set{(\xi', \eta'): \xi'\in \delta\cdot \Box', |\eta_1-Q_1(\xi')|\le \delta^2, \dots, |\eta_{n'}-Q_{n'}(\xi')|\le \delta^2},
\end{equation}
where $\delta\cdot \Box'$ is the box of the same center as $\Box'$ and side length $\delta$ times that of $\Box'$.
\item $\widehat{h_{\Box}}$ is supported on a rectangular box of dimensions comparable to
\begin{equation}
\underbrace{\delta^{1} \times \dotsm \times \delta^{1}}_{\text{$d''$ times}} \times
\underbrace{\delta^{2} \times \dotsm \times \delta^{2}}_{\text{$n''$ times}}
\end{equation}
contained in
\begin{equation}
\bigcup_{\xi'\in \delta\cdot \Box'}\Set{(\xi'', \eta''): \xi''\in \Box'', \abs{\eta_{\tilde{n}'}-Q_{\tilde{n}'}(\xi', \xi'')} \le \delta^2, n'<\tilde{n}'\le n}.
\end{equation}
\end{enumerate}
On one hand, by the uncertainty principle,
\begin{equation}
\norm{f_{\Box}}_{p} \sim \delta^{-(2d'+d''+2n)/p},
\end{equation}
and by definition we have
\begin{equation}
\begin{split}
\norm[\big]{ \sum_{\Box\in\Part{\delta}} f_{\Box} }_{p}
& \leq
\Dec_{q,p}(\bq,\delta)
\Bigl( \sum_{\Box\in\Part{\delta}} \norm{f_{\Box}}^{q} \Bigr)^{1/q}\\
& \sim
\Dec_{q,p}(\bq,\delta)
\delta^{-d/q} \delta^{-(2d'+d''+2n)/p}.
\end{split}
\end{equation}
On the other hand, with $U=\Set{(x'',y'')\in\R^{d''}\times\R^{n''} \given \abs{x''}, \abs{y''} \leq 10^{-d-n}/(\sup_{j}\norm{\Hess Q_{j}} + 1)}$, we have
\begin{equation}
\begin{split}
\norm[\big]{ \sum_{\Box\in\Part{\delta}} f_{\Box} }_{p}
&\gtrsim
\inf_{(x'',y'') \in U}
\norm[\big]{ \sum_{\Box\in\Part{\delta}} f_{\Box} }_{L^{p}(\R^{d'} \times \Set{x''} \times \R^{n'} \times \Set{y''})}
\\ &=
\inf_{(x'',y'') \in U}
\norm[\big]{ \sum_{\Box'} c_{\Box',x'',y''} g_{\Box'} }_{L^{p}(\R^{d'} \times \R^{n'})}
\end{split}
\end{equation}
where
\begin{equation}
\label{eq:cbox'x''y''}
c_{\Box',x'',y''} := \sum_{\Box''} h_{\Box' \times \Box''}(x'',y'')=\sum_{\Box''}h_{\Box}(x'', y'')
\end{equation}
satisfies
\begin{equation}
\abs{ c_{\Box',x'',y''} } \sim \delta^{-d''}
\end{equation}
uniformly in $\Box'$ and $(x'',y'') \in U$.
This is because $h_{\Box}(0,0)=1$ and
\[
\abs{h_{\Box}(x'', y'')-h_{\Box}(0,0)}
\leq
\int \abs{e(x''\cdot \xi''+y''\cdot \eta'')-1} \abs{\widehat{h_\Box}(\xi'',\eta'')} \dif\xi'' \dif\eta''
\leq\frac{1}{2} \int \abs{\widehat{h_\Box}(\xi'',\eta'')} \dif\xi'' \dif\eta''
=
\frac{1}{2},
\]
so that all summands in \eqref{eq:cbox'x''y''} are close to $1$.

Let $\phi_{\delta}(\cdot)= \phi(\delta^{2}\cdot)$, where $\phi$ is a fixed positive Schwartz function on $\R^{d'}\times\R^{n'}$ with $\supp \widehat{\phi} \subset B(0,1/10)$.
Then, by H\"older's inequality,
\begin{equation}
\begin{split}
& \norm[\big]{ \sum_{\Box'} c_{\Box',x'',y''} g_{\Box'} }_{L^{p}(\R^{d'}\times\R^{n'})}\\
&\geq
\norm{\phi_{\delta}}_{1/(1/2-1/p)}^{-1}
\norm[\big]{ \phi_{\delta} \sum_{\Box'} c_{\Box',x'',y''} g_{\Box'} }_{L^{2}(\R^{d'}\times\R^{n'})}
\\ &\sim
\delta^{2\cdot (d'+n')(1/2-1/p)} \norm[\big]{ \sum_{\Box'} c_{\Box',x'',y''} \phi_{\delta}\cdot g_{\Box'} }_{L^{2}(\R^{d'}\times\R^{n'})}.
\end{split}
\end{equation}
Since the Fourier supports of $\phi_{\delta} \cdot g_{\Box'}$ are disjoint for different $(\Box')$'s for sufficiently small $\delta$, we obtain
\begin{equation}
\label{eq:orthogonality_in_Box'}
\begin{split}
\norm[\big]{ \sum_{\Box'} c_{\Box',x'',y''} \phi_{\delta}\cdot g_{\Box'} }_{L^{2}(\R^{d'}\times\R^{n'})}
&=
\Bigl( \sum_{\Box'} \abs{ c_{\Box',x'',y''} }^2 \norm[\big]{ \phi_{\delta}\cdot g_{\Box'} }_{L^{2}(\R^{d'}\times\R^{n'})}^{2} \Bigr)^{1/2} \\
&\sim
\delta^{-d'/2} \cdot \delta^{-d''} \cdot \delta^{-2 \cdot (d'+n')/2},
\end{split}
\end{equation}
uniformly in $(x'', y'') \in U$.
Combining the above estimates, we obtain
\begin{equation}
\begin{split}
& \Dec_{q,p}(\bq,\delta) \delta^{-d/q} \delta^{-(2d'+d''+2n)/p}\\
& \gtrsim
\delta^{2\cdot (d'+n')(1/2-1/p)}
\cdot
\delta^{-d'/2} \cdot \delta^{-d''} \cdot \delta^{-2 \cdot (d'+n')/2}.
\end{split}
\end{equation}
This implies
\begin{align*}
\Gamma_{q,p}(\bq)
\ge
d(1-1/q-1/p) - d'(1/2-1/p) - 2(n-n')/p,
\end{align*}
as desired.
\end{proof}

\section{Sharp \texorpdfstring{$\ell^{q}L^{p}$}{\9041\023{}qLp} decoupling inequalities with \texorpdfstring{$q>p$}{q>p}}\label{201031section8}

\begin{proof}[Proof of Theorem~\ref{thm:main} with $q>p$]
The upper bound $\leq$ follows from the Hölder inequality between $\ell^{p}$ and $\ell^{q}$ sums in the definitions of $\Gamma_{q, p}$ and $\Gamma_{p, p}$.

Let us prove the lower bound.
Recall from Corollary~\ref{1015.1.3} that
\begin{equation}
\label{eq:dec-exp-ell-p-L-p-cc}
\Gamma_{p, p}(\bq) =
\max_{d/2 \leq d'\leq d} \max_{0\leq n'\leq n} \Big( (2d' - \numvar_{d',n'}(\bq)) \bigl(\frac{1}{2}-\frac{1}{p}\bigr) - \frac{2(n-n')}{p}\Big).
\end{equation}
We will show that
\begin{equation}\label{201031e7_1}
\Gamma_{q, p}(\bq)\ge \max_{d'\leq d} \max_{0\leq n'\leq n} \Big( (2d' - \numvar_{d',n'}(\bq)) \bigl(\frac{1}{2}-\frac{1}{p}\bigr) - \frac{2(n-n')}{p}\Big)+ d(1/p-1/q)
\end{equation}
via an induction on $d$.
The base case $d=1$ is easy, as quadratic forms depending on one variable $\xi_1$ are all multiples of $\xi_1^2$.
Let us assume we have proven \eqref{201031e7_1} for $d=d_0$, that is, we have established \eqref{201031e7_1} for all $\bq$ depending on $d_0$ variables.
We aim to prove it for $d=d_0+1$, that is, for $\bq$ depending on $d_0+1$ variables.
First of all, we apply Proposition~\ref{prop:skew-lower-bd} and obtain
\begin{equation}
\Gamma_{q, p}(\bq)\ge \max_{0\leq n'\leq n} \Big( (2(d_0+1) - \numvar_{d_0+1,n'}(\bq)) \bigl(\frac{1}{2}-\frac{1}{p}\bigr) - \frac{2(n-n')}{p}\Big)+ (d_0+1)(\frac 1 p-\frac 1 q),
\end{equation}
which is the right hand side of \eqref{201031e7_1} with $d'=d_0+1$.
It remains to prove that
\begin{equation}\label{201031e7_4}
\Gamma_{q, p}(\bq)\ge \max_{d'\leq d_0} \max_{0\leq n'\leq n} \Big( (2d' - \numvar_{d',n'}(\bq)) \bigl(\frac{1}{2}-\frac{1}{p}\bigr) - \frac{2(n-n')}{p}\Big)+ (d_0+1)(\frac 1 p-\frac 1 q).
\end{equation}
Let $H\subset \R^{d_0+1}$ be a linear subspace of dimension $d_0$.
By Lemma~\ref{lem:subspace-lower-bd}, we obtain
\begin{equation}
\Gamma_{q, p}(\bq)\ge \Gamma_{q, p}(\bq|_H)+\frac 1 p-\frac 1 q.
\end{equation}
Now we apply our induction hypothesis to $\Gamma_{q, p}(\bq|_H)$ as $\bq|_H$ depend on $d_0$ variables, and obtain
\begin{equation}\label{201031e7_6}
\Gamma_{q, p}(\bq)\ge \max_{d'\le d_0}\max_{n'\le n}\Big( (2d' - \numvar_{d',n'}(\bq|_H)) \bigl(\frac{1}{2}-\frac{1}{p}\bigr) - \frac{2(n-n')}{p}\Big)+ (d_0+1)(\frac 1 p-\frac 1 q).
\end{equation}
In order to prove \eqref{201031e7_4}, we first take the sup over $H$ in \eqref{201031e7_6} and realize that it suffices to prove
\begin{equation}
\inf_H \numvar_{d', n'}(\bq|_H)\le \numvar_{d', n'}(\bq),
\end{equation}
for every $H$ of dimension $d_0$ and every $d'\le d_0$.
This follows from the definition of $\numvar_{d', n'}$.
\end{proof}

The following example shows that Proposition~\ref{prop:skew-lower-bd} does not by itself always give the correct lower bound for $\Gamma_{q, p}$ when $q>p$.
Let us take the extreme case $q=\infty$.
\begin{example}
\label{ex:1}
Let $d=4$, $n=2$, and
\[
\bq = (\xi_{1}^{2}+\xi_{2}\xi_{4},\xi_{3}\xi_{4})
\]
We have
\[
\numvar_{4,2} = 4, \quad
\numvar_{4,1} = 2, \quad
\numvar_{3,2} = 1,
\]
and all other $\numvar_{d',n'}$ are $0$.
Let $p = 2 + 4n/d = 4$.
Then direct computation shows that $\Gamma_{\infty,p}=9/4$.
However Proposition~\ref{prop:skew-lower-bd} only shows $\Gamma_{\infty, p}\ge 2$.
\end{example}

\section{Proofs of Corollaries~\ref{optimal-l2-decoupling}--\ref{20201021.1.5.}}\label{201031section7}

\subsection{Proof of Corollary~\ref{optimal-l2-decoupling}}

We apply Theorem~\ref{thm:main} with $q=2$ to the tuple of quadratic forms $\bq$, and by \eqref{eq:Gamma-2-p-universal-lower-bd}, we know that \eqref{20201022.1.4} holds true if and only if
\begin{equation}\label{20201022.3.1}
\begin{split}
&\max_{0\leq d'\leq d} \max_{0\leq n'\leq n} \Big( (d'-\numvar_{d',n'} (\bq))\bigl(\frac{1}{2}-\frac{1}{p}\bigr) - \frac{2(n-n')}{p}\Big)
\\&
\leq
\max{\Big(0,d(\frac12-\frac1p)-\frac{2n}{p} \Big)}
\end{split}
\end{equation}
for every $p\ge 2$.
Both sides of \eqref{20201022.3.1} are finite maxima of affine linear functions in $1/p$.
The two arguments of the $\max$ on the right hand side coincide at $p_0:=2+4n/d$.
Hence, \eqref{20201022.3.1} holds for every $p \in [2,\infty]$ if and only if it holds for all $p\in \Set{2,p_{0},\infty}$.

For $p=2$, we have $LHS\eqref{20201022.3.1} = 0 = RHS\eqref{20201022.3.1}$.
For $p=\infty$, we have
\[
LHS\eqref{20201022.3.1} = \max_{0\leq d'\leq d} \max_{0\leq n'\leq n} (d'-\numvar_{d',n'} (\bq))/2 = d/2,
\]
where the maximum is attained at $d'=d$ and $n'=0$, and therefore \eqref{20201022.3.1} holds with equality at $p=\infty$.
For $p=p_{0}$, \eqref{20201022.3.1} is equivalent to
\begin{equation}
\label{eq:10}
\max_{0\leq d'\leq d} \max_{0\leq n'\leq n} \Big( (d'-\numvar_{d',n'} (\bq)) \frac{2n}{dp_{0}} - \frac{2(n-n')}{p_0}\Big) \leq 0.
\end{equation}
A direct calculation shows that \eqref{eq:10} is equivalent to the strong non-degeneracy condition \eqref{equ:1strongly_non_dege}.

\subsection{Proof of Corollary~\ref{201017coro1.3}}
The proof is basically the same as that for Corollary~\ref{optimal-l2-decoupling}.
We apply Corollary~\ref{1015.1.3} to the tuple of quadratic forms $\bq$, and by \eqref{eq:Gamma-p-p-universal-lower-bd}, we know that \eqref{20201021.1.4} holds true if and only if
\begin{equation}\label{20201021.2.1}
\begin{split}
&\max_{d/2 < d'\leq d} \max_{0\leq n'\leq n} \Big( (2d' - \numvar_{d',n'}(\bq)) \bigl(\frac{1}{2}-\frac{1}{p}\bigr) - \frac{2(n-n')}{p}\Big)
\\&
\leq
\max{\Big(d(\frac12-\frac1p),2d(\frac12-\frac1p)-\frac{2n}{p} \Big)}
\end{split}
\end{equation}
for every $p\ge 2$.
The two numbers on the right hand side coincide at $p_0=2+4n/d$.
As in the proof of Corollary~\ref{optimal-l2-decoupling}, \eqref{20201021.2.1} holds for every $p \in [2,\infty]$ if and only if it holds for all $p\in \Set{2,p_{0},\infty}$.
For $p\in \Set{2,\infty}$, the condition \eqref{20201021.2.1} again always holds with equality.
Hence, \eqref{20201021.2.1} holds for every $p \in [2,\infty]$ if and only if it holds at $p=p_0$, which is further equivalent to
\begin{equation}
\label{eq:11}
\max_{d/2 < d'\leq d} \max_{0\leq n'\leq n} \Big( (2d' - \numvar_{d',n'}(\bq)) \frac{2n}{dp_{0}} - \frac{2(n-n')}{p_{0}}\Big)
\leq
\frac{2n}{p_{0}}.
\end{equation}
A direct calculation shows \eqref{eq:11} is equivalent to the non-degeneracy condition \eqref{eq:non_degenerate}.

\subsection{Proof of Corollary~\ref{20201021.1.5.}}
Recall from Corollary~\ref{1015.1.3} that
\[
\Gamma_{p}(\bq)
=
\max_{d/2 < d'\leq d} \max_{0\leq n'\leq n} \gamma_{d',n'}(1/p),
\quad
\gamma_{d',n'}(1/p)
=
(2d' - \numvar_{d',n'}(\bq)) \bigl(\frac{1}{2}-\frac{1}{p}\bigr) - \frac{2(n-n')}{p}.
\]
The functions $\gamma_{d',n'}$ are affine.
For $n' < n$ and arbitrary $d'$, we have $\gamma_{d',n'}(1/2) < 0$.
Moreover, for arbitrary $d'$, we have $\gamma_{d',n}(1/2) = 0$.
For every $p\in (2,\infty)$, the condition \eqref{equ:weakly_non_dege} is equivalent to
\[
\forall d' \in (d/2,d] \quad
\gamma_{d',n}(1/p) \leq d(1/2-1/p).
\]
In particular, if \eqref{equ:weakly_non_dege} fails, then \eqref{20201021.1.21.} fails for any $p_c>2$.

Suppose now that the condition \eqref{equ:weakly_non_dege} is satisfied.
Then, in particular, $\numvar_{d,n}(\bq)=d$, and it follows that
Then, Corollary~\ref{1015.1.3} implies
\begin{equation*}
\Gamma_{p}(\bq)
=
\max\Bigl( d \bigl(\frac{1}{2}-\frac{1}{p}\bigr) ,
\max_{d/2 < d'\leq d} \max_{0\leq n'< n} \gamma_{d',n'}(1/p) \Bigr).
\end{equation*}
Since the latter double maximum is a piecewise affine function of $1/p$ and is strictly negative for $p=2$, we see that there exists $p_c>2$ satisfying \eqref{20201021.1.21.}.
The largest possible $p_c$ is the minimum of solutions $p \in (2,\infty)$ of the equations
\begin{equation}
d\bigl(\frac12-\frac{1}{p}\bigr) = \gamma_{d',n'}(1/p)
\end{equation}
for $d/2 < d' = d-m \leq d$ and $0 \leq n' \leq n-1$.
These solutions are given by the formula
\begin{equation}
p(d',n')=2+\frac{4n-4n'}{2d'-d-\numvar_{d',n'}(\bq)}.
\end{equation}
This shows \eqref{giantnumber}, since the minimum in \eqref{giantnumber} is restricted in such a way as to be taken over numbers in $(2,\infty)$.

We note also that, for $n'=0$ and $m=0$, we have $\numvar_{d,0}(\bq)=0$, which shows that the minimum in \eqref{giantnumber} is taken over a non-empty set, and is at most $2+4n/d$.

\section{Fourier restriction: proof of Corollary~\ref{coro:app_to_restriction}}\label{201031section10}

In this section we prove Corollary~\ref{coro:app_to_restriction}.
The proof is standard, and it relies on an epsilon removal lemma of Tao \cite{MR1666558}, the broad-narrow analysis of Bourgain and Guth \cite{MR2860188} and the decoupling inequalities established in the current paper.
The use of decoupling inequalities in this context is also standard, see for instance Guth \cite{MR3877019}.
As $\bq$ will be fixed throughout the proof,
we will leave out the dependence of the extension function $E^{\bq}g$ on $\bq$ and simply write $Eg$.

Let us begin with the epsilon removal lemma.
In order to prove \eqref{201031e2_5}, it suffices to prove that for every $\epsilon>0$, there exists $C_{d, n, p, \bq, \epsilon}=C_{\epsilon}$ such that
\begin{equation}\label{201106e9_1}
\norm{E_{[0, 1]^d} g}_{L^p(B)}\le C_{\epsilon} \delta^{-\epsilon}\norm{g}_p,
\end{equation}
for every $\delta\le 1, \epsilon>0$, $p>p_{\bq}$ and every ball $B\subset \R^{d+n}$ of radius $\delta^{-2}$.
Here and below, we will leave out the dependence of our implicit constants on $d, n, p$ and $\bq$.
Such a reduction first appeared in \cite{MR1666558}, see also \cite{MR2860188, Kim2017SomeRO}.
For a version of epsilon removal lemmas for manifolds of co-dimension bigger than one, we refer to Section 4 in \cite{2020arXiv200907244G}.\\

In order to prove \eqref{201106e9_1}, we will apply the broad-narrow analysis and the decoupling inequalities in the current paper, together with an induction argument on $\delta$.
Let us assume that we have proven \eqref{201106e9_1} with $\delta'$ in place of $\delta$ for every $1\ge \delta'\ge 2\delta$.
Under this induction hypothesis, we will prove \eqref{201106e9_1}.
Let us begin with one corollary of Proposition~\ref{prop:ball-inflation}.

\begin{corollary}[Multilinear restriction estimate]\label{MRE}
Let $K \in 2^{\N}$ be a dyadic integer and $0 < \delta \leq 1/K$.
Let $\theta>0$ and $\Set{W_j}_{j=1}^{M} \subseteq \Part{1/K}$ be a $\theta$-uniform set of cubes.
Let $B\subset \R^{d+n}$ be a ball of radius $\delta^{-2}$.
Then, for each $2 \leq p < \infty$ and $\epsilon'>0$, we have
\begin{equation}
\begin{split}
\norm[\Big]{\avprod_{j=1}^{M}
|E_{W_j}g|}_{L^{p}(B)}
\le C_{\theta,K,\epsilon'} \delta^{-\gamma({p,\theta,\bq})-\epsilon'} \avprod_{j=1}^{M} \norm{g}_{L^2({W_j})},
\end{split}
\end{equation}
where
\begin{equation}
\gamma({p,\theta,\bq}):= \sup_{0 \leq n' \leq n}\bigg(\frac{2n'}{p} +\big(\frac2p-(1-\theta)\big)\numvar_{n'}(\bq) \bigg).
\end{equation}
\end{corollary}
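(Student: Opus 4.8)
The plan is to deduce Corollary~\ref{MRE} directly from the ball inflation inequality (Proposition~\ref{prop:ball-inflation}) by choosing the exponent $t=p$ there, unwinding the $\avL$-norms into ordinary extension operators, and iterating the resulting recursion on scales. First I would observe that for a single cap $W$ of side length $1/K$, the function $E_{W}g$ is (morally) constant on balls of radius $K^2$, so in particular we may regard $E_W g$ as built from pieces $f_J$ with $J\in\Part[W]{\rho}$ for $\rho=1/K$; more usefully, $\|E_W g\|_{L^p(w_{B(x,1/\rho)})}$ behaves like a local average that fits the left-hand side of \eqref{eq:ball-inflation}. The standard move (cf.\ \cite{MR3548534,MR4143735}) is to apply Proposition~\ref{prop:ball-inflation} with $t=p$, $\tilde q = p$, so that the factor $\kappa((1-\theta)p/t) = \kappa(1-\theta)$, bound $\kappa(1-\theta)$ via Corollary~\ref{cor:kappa-bound} by $\sup_{0\le n'\le n}(n' + (1-(1-\theta))\numvar_{n'}(\bq)) = \sup_{n'}(n' + \theta\,\numvar_{n'}(\bq))$, and then track how the exponent accumulates under iteration down from scale $1/K$ to scale $\delta$.

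The key steps, in order: (1) Set up the multilinear quantity $\mathcal{A}(\rho) := \big\|\avprod_{j}\big(\sum_{J\in\Part[W_j]{\rho}}\|f_J\|_{\avL^p(w_{B(x,1/\rho)})}^p\big)^{1/p}\big\|_{\avL^p_{x\in B}}$ with $f_J = E_J g$ (a slight abuse: we really take a Schwartz extension), and note that at the top scale $\rho = 1/K$ it is comparable to $\avprod_j \|E_{W_j} g\|_{L^p}$ after normalizing by the volume of $B$, while at scale $\rho=\delta$ it is comparable to $\avprod_j (\sum_{\Box}\|E_\Box g\|_p^p)^{1/p}$. (2) Apply Proposition~\ref{prop:ball-inflation} at a geometric sequence of scales $\rho_0 = 1/K > \rho_1 > \dots > \rho_N = \delta$, say $\rho_{k+1} = \rho_k^{2}$ or some fixed ratio, to get $\mathcal{A}(\rho_k)\lesssim \rho_k^{-(\frac{d}{p}-\frac{d+n}{p}+\frac{\kappa(1-\theta)}{p})}\,(\text{ratio})^{\dots}\,\mathcal{A}(\rho_{k+1})$ and multiply these up; the exponents telescope to something of the form $\delta^{-\gamma(p,\theta,\bq)-o(1)}$ once one substitutes the bound on $\kappa(1-\theta)$ and simplifies $-\frac{n}{p} + \frac{1}{p}\sup_{n'}(n'+\theta\numvar_{n'}(\bq)) = \sup_{n'}\big(\frac{n'-n}{p} + \frac{\theta}{p}\numvar_{n'}(\bq)\big)$; combined with the $L^2$ normalization $\|f_\Box\|_p \sim \delta^{-(\dots)}\|g\|_{L^2(\Box)}$ and flat decoupling / Hölder in the final step, one lands on the claimed exponent $\gamma(p,\theta,\bq) = \sup_{n'}\big(\frac{2n'}{p}+(\frac2p-(1-\theta))\numvar_{n'}(\bq)\big)$. (3) At the bottom scale, pass from $E_\Box g$ back to $\|g\|_{L^2(\Box)}$ using Plancherel on the slab $\calU_\Box$ (each $\calU_\Box$ has volume $\sim\delta^{d+2n}$, giving the Bernstein-type factor), and from $\ell^p$-sums over $\Box\in\Part[W_j]{\delta}$ back to $\|g\|_{L^2(W_j)}$ by Hölder in the number $\sim(\delta K)^{-d}$ of such cubes — here one must check the power of $\delta$ produced by Hölder is absorbed into the $\delta^{-\epsilon'}$ loss, which is where the precise shape of $\gamma$ matters. (4) Finally, replace the Schwartz-extension stand-ins for $f_J$ by genuine $E_J g$ via the standard uncertainty-principle comparison \eqref{FJbound}, which costs only constants depending on $K$ and the weight exponent $E$.

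I expect the main obstacle to be bookkeeping rather than anything conceptual: keeping the exponents straight through the iteration so that the $\rho$-powers from the many applications of Proposition~\ref{prop:ball-inflation} telescope \emph{exactly} into $\delta^{-\gamma(p,\theta,\bq)}$ (times acceptable $\delta^{-\epsilon'}$ losses from the $|\log\rho|^{K^d}$ factors and from Hölder at the bottom scale), and verifying that the $L^2$-based right-hand side $\avprod_j\|g\|_{L^2(W_j)}$ emerges with no extra $\delta$-power. A secondary technical point is justifying that a single extension operator $E_{W_j}g$ can legitimately be fed into the left side of \eqref{eq:ball-inflation} as the trivial ``partition into one cube'' at the top scale — this is routine but must be done carefully since $B$ has radius $\delta^{-2}$, not $K^2$, so one really runs the iteration rather than invoking the proposition once. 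Alternatively, and perhaps more cleanly, one can avoid iteration entirely: apply Proposition~\ref{prop:ball-inflation} a single time with $\rho = \delta$ directly to the functions $f_J = E_J g$ (choosing $t=p$), which immediately yields $\avprod_j\|E_{W_j}g\|_{\avL^p(B)}$-type control by $\delta^{-(\frac{d}{p}-\frac{d+n}{p}+\frac{\kappa(1-\theta)}{p})}\avprod_j(\sum_{\Box\in\Part[W_j]{\delta}}\|E_\Box g\|_{\avL^p(w_B)}^p)^{1/p}$, and then one only needs steps (3)–(4) above; this is the approach I would actually write up, as it sidesteps the telescoping altogether and reduces the corollary to a single invocation of the ball inflation estimate plus elementary $L^2$-normalization.
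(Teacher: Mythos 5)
Your overall plan (ball inflation $+$ iteration over scales $+$ Plancherel at the bottom) is the right skeleton, but the crucial choice $t=p$ is where the argument breaks, and neither your iterated version nor your preferred ``single invocation'' version proves the stated exponent. The target $\gamma(p,\theta,\bq)=\sup_{n'}\bigl(\tfrac{2n'}{p}+(\tfrac2p-(1-\theta))\numvar_{n'}(\bq)\bigr)$ is exactly $\tfrac{2}{p}\,\kappa\bigl((1-\theta)\tfrac p2\bigr)$ after invoking Corollary~\ref{cor:kappa-bound}; that is, it comes from the Brascamp--Lieb exponent at $\alpha=(1-\theta)p/2$, which forces the inner exponent $t=2$ in Proposition~\ref{prop:ball-inflation}, not $t=p$ (where $\alpha=1-\theta$). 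This is what the paper does: it mollifies by $\psi_B$, uses $L^{2}$-orthogonality to pass from $E_{W_j}g$ to caps at scale $\delta^{\epsilon'}$ \emph{for free} inside local $\avL^{2}$ norms, then iterates the $t=2$ ball inflation along the squaring sequence $\delta^{\epsilon'}\to\delta^{2\epsilon'}\to\cdots\to\delta$ (again refining caps at no cost by $L^2$-orthogonality at each step), and finally converts $\bigl(\sum_{\Box}\|E_{\Box}g\|_{\avL^{2}(w_B)}^{2}\bigr)^{1/2}\lesssim\delta^{d}\|g\|_{L^{2}(W_j)}$ by Plancherel with no loss. With $t=p$ none of these steps is free: there is no $L^p$-orthogonality, so initializing the left-hand side (dominating $|E_{W_j}g|$ by an $\ell^p$-aggregate of local $L^p$ norms of the $E_Jg$) and refining caps between scales cost genuine powers of $\delta$ via triangle inequality/H\"older, and the bottom-scale passage from $\ell^pL^p$ over $\Box$ to $\|g\|_{L^2(W_j)}$ via Bernstein and $\ell^p\subset\ell^2$ costs further genuine powers — these are not $\delta^{-\epsilon'}$ losses.

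One can see concretely that your ``clean'' single-application route with $t=p$ lands on the wrong exponent even if the initialization were granted: carrying out your steps (3)--(4) gives an exponent of order
\begin{equation*}
\frac{d+n}{p}-\frac d2+\frac1p\sup_{0\le n'\le n}\bigl(n'+\theta\numvar_{n'}(\bq)\bigr),
\end{equation*}
which for the paraboloid ($n=1$, $\numvar_1=d$, $\theta$ small) is $\tfrac{d+2}{p}-\tfrac d2$, vanishing only for $p\ge 2+\tfrac4d$, whereas $\gamma(p,\theta,\bq)$ vanishes already at the sharp threshold $p\ge 2+\tfrac2d$. So the proposal proves a strictly weaker multilinear estimate, which would not suffice for the restriction range $p>p_{\bq}$ in Corollary~\ref{coro:app_to_restriction}. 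To repair it you should run the argument exactly in the $\avL^{2}$ framework: $t=2$, $\kappa((1-\theta)p/2)$, $L^{2}$-orthogonality for all cap refinements, squaring scales so the inflation factors telescope to $\delta^{-2(\frac d2-\frac{d+n}{p}+\frac{\kappa((1-\theta)p/2)}{p})}$, and Plancherel only at the final scale.
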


\begin{proof}[Proof of Corollary~\ref{MRE}]
The proof is essentially via the argument of passing from multi-linear Kakeya estimates to multi-linear restriction estimates as in Bennett, Carbery and Tao \cite{MR2275834}.
Let us first show that
\begin{equation}\label{9.3.ball-inflation}
\begin{split}
\norm[\Big]{\avprod_{j=1}^{M}
|E_{W_j}g|}_{\avL^{p}(B)}
\le C_{\theta,K,\epsilon'} \delta^{-2(\frac{d}{2} - \frac{d+n}{p} + \frac{\kappa((1-\theta)p/2)}{p})-\epsilon'} \avprod_{j=1}^{M} \Big(\sum_{\Box \in \Part[W_j]{\delta}}\norm{E_{\Box}g}_{\avL^{2}(w_B)}^2 \Big)^{1/2},
\end{split}
\end{equation}
for every $\epsilon'>0$.
We take a Schwartz function $\psi$ such that $\psi$ is positive on the ball of radius one centered at the origin, and Fourier transform of $\psi$ has a compact support.
Let us define the function $\psi_{B}(x):=\psi(\delta^2x)$.
By H\"{o}lder's inequality and $L^2$-orthogonality (see for instance \cite[Appenidx B]{MR4031117}), we see that
\begin{equation}
\begin{split}
\norm[\Big]{\avprod_{j=1}^{M}
|E_{W_j}g|}_{\avL^{p}(B)}
&\lesssim
\norm[\Big]{\avprod_{j=1}^{M}
|\psi_B E_{W_j}g|}_{\avL^{p}(B)}
\\&\lesssim
\delta^{-\epsilon'(d+n)/2}
\norm[\Big]{\avprod_{j=1}^{M}
\norm{\psi_BE_{W_j}g}_{\avL^2({B(x,\delta^{-\epsilon'})})}}_{\avL^{p}_{x \in B}}
\\&\lesssim
\delta^{-\epsilon'(d+n)/2}
\norm[\Big]{\avprod_{j=1}^{M}
\Big(\sum_{J \in \Part[W_j]{\delta^{\epsilon'}} }
\norm{\psi_BE_{J}g}_{\avL^2({w_{B(x,\delta^{-\epsilon'})}})}^2 \Big)^{1/2} }_{\avL^{p}_{x \in B}}.
\end{split}
\end{equation}
We apply \eqref{eq:ball-inflation} and $L^2$-orthogonality, and bound the above term by
\begin{equation}
\begin{split}
&\delta^{-\epsilon'(d+n)/2}
\delta^{-\epsilon'(\frac{d}{2} - \frac{d+n}{p} + \frac{\kappa((1-\theta)p/2)}{p})}
\norm[\Big]{\avprod_{j=1}^{M}
\Big(\sum_{J \in \Part[W_j]{\delta^{\epsilon'}} }
\norm{\psi_BE_{J}g}_{\avL^2({w_{B(x,\delta^{-2\epsilon'})}})}^2 \Big)^{1/2} }_{\avL^{p}_{x \in B}}
\\&
\lesssim
\delta^{-\epsilon'(d+n)/2}
\delta^{-\epsilon'(\frac{d}{2} - \frac{d+n}{p} + \frac{\kappa((1-\theta)p/2)}{p})}
\norm[\Big]{\avprod_{j=1}^{M}
\Big(\sum_{J \in \Part[W_j]{\delta^{2\epsilon'}} }
\norm{\psi_BE_{J}g}_{\avL^2({w_{B(x,\delta^{-2\epsilon'})}})}^2 \Big)^{1/2} }_{\avL^{p}_{x \in B}}.
\end{split}
\end{equation}
We repeat this process and obtain
\begin{equation}
\delta^{-\epsilon'(d+n)/2}
\delta^{-2(\frac{d}{2} - \frac{d+n}{p} + \frac{\kappa((1-\theta)p/2)}{p})}
\norm[\Big]{\avprod_{j=1}^{M}
\Big(\sum_{J \in \Part[W_j]{\delta} }
\norm{\psi_BE_{J}g}_{\avL^2({w_{B(x,\delta^{-2})}})}^2 \Big)^{1/2} }_{\avL^{p}_{x \in B}}.
\end{equation}
We rename $\epsilon'(d+n)/2$ by $\epsilon'$, and the above term is bounded by the right hand side of \eqref{9.3.ball-inflation}.

By Plancherel theorem, we see that
\begin{equation}
\norm{E_{\Box}g}_{\avL^{2}(w_B)}
\lesssim \delta^{d}\norm{g}_{L^2(\Box)}.
\end{equation}
Therefore, by \eqref{9.3.ball-inflation}, we obtain that
\begin{equation}
\norm[\Big]{\avprod_{j=1}^{M}
|E_{W_j}g|}_{L^{p}(B)}
\le C_{\theta,K,\epsilon'} \delta^{-2\kappa((1-\theta)p/2)/p-\epsilon'} \avprod_{j=1}^{M} \norm{g}_{L^2({W_j})}.
\end{equation}
It suffices to apply Corollary~\ref{cor:kappa-bound} to bound $\kappa$.
\end{proof}

We let $\theta$ be a small number, which will be determined later.
Its choice depends only on how close $p$ is to $p_{\bq}$.
Therefore the dependence of the forthcoming constants on $\theta$ will also be compressed.
Readers can take $\theta=0$ for convenience.
We define $p_c$ to be the smallest number such that $\gamma(p_c,\theta,\bq)=0$.
More explicitly,
\begin{equation}
p_c=\max_{1 \leq n' \leq n}\bigg(2+\frac{2n'+2\theta\numvar_{n'}(\bq)}{(1-\theta)\numvar_{n'}(\bq)} \bigg).
\end{equation}
To prove \eqref{201106e9_1}, we run the broad-narrow analysis of Bourgain and Guth \cite{MR2860188}, in a way that is almost the same as in the proof of Proposition~\ref{prop:Bourgain-Guth}.
We repeat the proof there until before the step \eqref{201031e5_25}, with $q=p$ and $f_{\Box}$ replaced by $\psi_B E_{\Box} g$ for every dyadic box $\Box$ of side length $\delta$.
Next, instead of summing over all balls $B'$ of radius $K$ in $\R^{d+n}$, we sum over $B'\subset B$, a ball of radius $\delta^{-2}$, and obtain
\begin{equation}\label{201031e5_25zz}
\begin{split}
& \norm{\sum_{\Box\subset [0, 1]^d} \psi_B E_{\Box}g}_{L^p(B)}\le C_{\epsilon'} \sum_{j=0}^d K_j^{\Lambda+2\epsilon'}\Big( \sum_{W\in \calP(1/K_j)} \norm{\psi_B E_W g}_{L^p(w_B)}^p\Big)^{1/p}\\
& + K^d \sum_{1\le M\le K^d} \sum_{\substack{W_1, \dots, W_M\in \calP(1/K)\\ \theta-\text{uniform}}} \Big( \sum_{B'\subset B} \avprod_{j=1}^M\norm{\psi_B E_{W_j} g}^p_{L^p(B')}\Big)^{1/p},
\end{split}
\end{equation}
where
\begin{equation}
\label{eq:lower-dim-dec-exponent_zz}
\Lambda := \sup_{H}\Gamma_{p}(\bq|_H),
\end{equation}
and the sup is taken over all hyperplanes $H \subset \R^d$ that pass through the origin.
Regarding the second term on the right hand side of \eqref{201031e5_25zz}, we notice that each term $|E_{W_j} g|$ is essentially constant on $B'$ and therefore we can apply Corollary~\ref{MRE} and bound it by $C_{\epsilon', K} \delta^{-\epsilon'} \norm{g}_2$, whenever $p>p_c$.
So far we have obtained
\begin{equation}\label{201106e9_12}
\norm{\psi_B E_{[0, 1]^d}g}_{L^p(B)}\le C_{\epsilon'} \sum_{j=0}^d K_j^{\Lambda+2\epsilon'}\Big( \sum_{W\in \calP(1/K_j)} \norm{\psi_B E_W g}_{L^p(w_B)}^p\Big)^{1/p}+ C_{\epsilon', K} \delta^{-\epsilon'} \norm{g}_2,
\end{equation}
for every $\epsilon'>0$ and $p>p_c$.
After arriving at this form, we are ready to apply an inductive argument as the terms on the right hand side of \eqref{201106e9_12} are of the same form as that on the left hand side, with just different scales.
To be precise, we will apply our induction hypothesis to each $\norm{\psi_B E_W g}_{L^p(w_B)}$.
All these terms can be handled in exactly the same way.
Without loss of generality, we take $W=[0, 1/K_j]^d$.
Recall that
\begin{equation}
E_{W}g(x, y)=\int_{W}g(\xi)e(\xi\cdot x+\bq(\xi)\cdot y)d\xi,
\end{equation}
where $x\in \R^d, y\in \R^n$.
We apply the change of variables $\xi\mapsto \xi/K_j$, the induction hypothesis and obtain
\begin{equation}
\norm{\psi_B E_W g}_{L^p(w_B)}\le C C_{\epsilon} \delta^{-\epsilon} K_j^{-d} K_j^{\frac{d+2n}{p}} K_j^{\frac{d}{p}} \norm{g}_{L^p(W)},
\end{equation}
where $C$ is some new large constant that is allowed to depend on $d, n, p$ and $\bq$.
This, together with \eqref{201106e9_12}, implies that
\begin{equation}\label{201106e9_16}
\norm{E_{[0, 1]^d} g}_{L^p(B)}\le CC_{\epsilon'} C_{\epsilon} \delta^{-\epsilon} \sum_{j=0}^d K_j^{\Lambda-d+\frac{2d+2n}{p}+2\epsilon'}\norm{g}_p+ C_{\epsilon', K} \delta^{-\epsilon'} \norm{g}_p,
\end{equation}
for every $\epsilon'>0$.
Recall from \eqref{201030e5_10} that there exists a small number $c=c_{\epsilon'}$ such that
\begin{equation}\label{201030e5_10zzz}
K^c\le K_1\le K_2\le \dots \le K_d\le \sqrt{K}.
\end{equation}
From \eqref{201106e9_16} we see that if $p$ is such that
\begin{equation}\label{201106e9_18}
\Lambda-d+(2d+2n)/p<0,
\end{equation}
then we can pick $\epsilon'$ small enough and $K$ sufficiently large, depending on $\epsilon'$, such that
\begin{equation}
C C_{\epsilon'}K^{\Lambda-d+(2d+2n)/p+2\epsilon'}\le 1/(2(d+1)).
\end{equation}
After fixing $\epsilon'$ and $K$, we see that in order to control the second term in \eqref{201106e9_16}, we just need to set the constant $C_{\epsilon}$ from \eqref{201106e9_1} to be $2C_{\epsilon', K}$ and then we can close the induction step.

Notice that there were two constraints on $p$, including $p>p_c$ and \eqref{201106e9_18}.
Recall the definition of $\Lambda$ in \eqref{eq:lower-dim-dec-exponent_zz}.
One can apply Theorem~\ref{thm:main} and see that
\begin{equation}
\Lambda=\max_{d'\leq d-1} \max_{0\leq n'\leq n} \Big[ (2d' - \numvar_{d',n'}(\bq)) \bigl(\frac{1}{2}-\frac{1}{p}\bigr) - \frac{2(n-n')}{p}\Big].
\end{equation}
Elementary computation shows that
\begin{equation}
p>\max\bigl(p_c, 2+ \max_{m\ge 1}\max_{n'\le n}\frac{4n'}{2m+\numvar_{d-m, n'}(\bq)}\bigr)=\max(p_c, p_{\bq}).
\end{equation}
As $p_c$ is a continuous function depending on $\theta$, to see that we have the range $p>p_{\bq}$, it suffices to show that
\begin{equation}
\max_{1 \leq n' \leq n}\bigg(2+\frac{2n'}{\numvar_{n'}(\bq)} \bigg) \leq 2+ \max_{m\ge 1}\max_{n'\le n}\frac{4n'}{2m+\numvar_{d-m, n'}(\bq)}.
\end{equation}
This inequality follows from
\begin{equation}
2\numvar_{d,n'}(\bq)\geq 2+ \numvar_{d-1,n'}(\bq),
\end{equation}
for every $n'\ge 1$, which holds true because $\numvar_{d,n'}(\bq)>\numvar_{d-1,n'}(\bq)$ as long as $\numvar_{d,n'}(\bq)>0$.
Recall that we assumed $\bq$ is linearly independent, and therefore we indeed have that $\numvar_{d,n'}(\bq)>0$ for every $n'\ge 1$.
This verifies the range $p>p_{\bq}$ and thus finishes the proof of the corollary.

\printbibliography
\end{document}